\title[Log Clemens conjecture and log connectivity ]{Infinitesimal invariants of mixed Hodge structures
 II: Log Clemens conjecture and log connectivity}
\author{Rodolfo Aguilar}
\date{}
\address{\parbox{\linewidth}{Centro de Investigacion en Matemáticas, A.C. (CIMAT), Jalisco S/N, Col. Valenciana CP: 36023 
Guanajuato, Gto, México}}
\email{\href{mailto:aaguilar.rodolfo@gmail.com}{aaguilar.rodolfo@gmail.com}}
\urladdr{\url{https://sites.google.com/view/rodolfo-aguilar/}}
\tikzset{
    arc arrow/.style args={%
    to pos #1 with length #2}{
    decoration={
        markings,
         mark=at position 0 with {\pgfextra{%
         \pgfmathsetmacro{\tmpArrowTime}{#2/(\pgfdecoratedpathlength)}
         \xdef\tmpArrowTime{\tmpArrowTime}}},
        mark=at position {#1-\tmpArrowTime} with {\coordinate(@1);},
        mark=at position {#1-2*\tmpArrowTime/3} with {\coordinate(@2);},
        mark=at position {#1-\tmpArrowTime/3} with {\coordinate(@3);},
        mark=at position {#1} with {\coordinate(@4);
        \draw[-{Stealth[length=#2,bend]}]       
        (@1) .. controls (@2) and (@3) .. (@4);},
        },
     postaction=decorate,
     }
}
\theoremstyle{plain} 
\newtheorem{thm}{Theorem}[section]
\newtheorem{lem}[thm]{Lemma}
\newtheorem{prop}[thm]{Proposition}
\newtheorem{cor}[thm]{Corollary}
\newtheorem{prob}{Problem}[section]
\theoremstyle{definition}
\newtheorem{exmp}{Example}[section]
\theoremstyle{remark}
\newtheorem{rem}[thm]{Remark}
\newtheoremstyle{case}{}{}{}{}{}{:}{ }{}
\theoremstyle{case}
\DeclareMathOperator{\Ima}{Im}
\DeclareMathOperator{\Hom}{Hom}
\DeclareMathOperator{\Ker}{Ker}
\DeclareMathOperator{\Hilb}{Hilb}
\DeclareMathOperator{\Alb}{Alb}
\DeclareMathOperator{\AJ}{AJ}
\DeclareMathOperator{\dAJ}{dAJ}
\DeclareMathOperator{\Ext}{Ext}
\DeclareMathOperator{\Gr}{Gr}
\DeclareMathOperator{\DR}{DR}
\DeclareMathOperator{\prim}{prim}
\DeclareMathOperator{\Pic}{Pic}
\DeclareMathOperator{\RHom}{RHom}
\newcommand{\abs}[1]{\left\vert#1\right\vert}
\newcommand{\QQ}{\mathbb{Q}}
\newcommand{\Oo}{\mathscr{O}}
\begin{document}

\begin{abstract}
Following previous work, we continue the study of infinitesimal methods in mixed Hodge theory. In the first part, inspired by the deformation theory of curves on Calabi-Yau threefolds, we study deformations of smooth $\mathbb{Q}$-log Calabi-Yau pairs $(X,Y)$. We prove unobstructedness results for these pairs under Fano hypotheses. We define families of infinitesimal Abel-Jacobi maps associated with these deformation problems and show that they control the first-order deformations of smooth curves embedded in the pair. Crucially, for the $\frac{1}{2}$-log Calabi-Yau case, we establish an exact duality between deformations and obstructions, recovering the symmetry found in the absolute Calabi-Yau setting. We apply this framework to the cubic threefold, proposing a relative generalization of the Clemens conjecture regarding the injectivity of the infinitesimal Abel-Jacobi map, and establishing a criterion for its non-vanishing.

In the second part, we define infinitesimal invariants for normal functions using extension classes and the log-Leray filtration. Relying on the theory of generalized Jacobian rings developed by Asakura and Saito, we prove a logarithmic Nori connectivity theorem for the universal family of open hypersurfaces, we also deduce a sharp algebraic criterion for the properness of the Hodge loci for open hypersurfaces, generalizing the proof of Carlson-Green-Griffiths-Harris.
\end{abstract}
      
\maketitle

\tableofcontents
\section{Introduction}
\subsection{Motivation} This note continues the line of investigation initiated in \cite{AGG24} and \cite{AGG24b}. In \cite{AGG24}, we introduced infinitesimal invariants for pairs $(X,Y)$, where $X$ is a smooth projective variety and $Y$ is a smooth divisor. Specifically, when $X$ is a smooth Fano threefold and $Y$ is an anticanonical divisor, we constructed a cubic form analogous to the Yukawa-Griffiths cubic for Calabi-Yau threefolds. We used this cubic and proved a generic Torelli theorem for a generic pair $(X,Y)$ with $X$ a cubic threefold. Subsequently, in \cite{AGG24b}, we studied smooth curves $C$ in Fano threefolds $X$ with $Y \in \abs{-K_X}$. There, we connected the Abel-Jacobi map of $X$ to the log-Calabi-Yau geometry of the pair, specifically relating it to the symplectic form on $Y$. These results suggested that the geometry of curves should be governed not just by the Abel-Jacobi map of $X$, but by a generalized Abel-Jacobi map associated with the pair $(X,Y)$.

This perspective is strongly motivated by the classical geometry of curves on Calabi-Yau threefolds. If $X$ is a smooth projective Calabi-Yau threefold and $C \subset X$ is a smooth curve, the deformation theory is controlled by Hodge theory. A central guide in this area is the Clemens conjectures \cite{C87}, particularly the prediction that the infinitesimal Abel-Jacobi map\begin{equation}\label{eq:IntAJ}\dAJ: H^0(N_{C/X}) \to H^1(\Omega_X^2)^*\end{equation}is injective for any smooth quintic threefold. Furthermore, in the Calabi-Yau setting, one has the perfect duality $H^0(N_{C/X}) \cong H^1(N_{C/X})^*$, which implies that the dual of $\dAJ$ essentially computes the obstructions to deforming the curve.

Our goal is to transpose this robust picture to the relative setting. While standard log-Calabi-Yau pairs share many properties with the absolute case, the deformation duality is not perfectly symmetric. This motivates us to consider $\mathbb{Q}$-log Calabi-Yau pairs (where $K_X + mY \simeq \mathcal{O}_X$). As we recently proposed in \cite{A25}, this seems to be the correct framework to generalize the Clemens conjectures. In the present work, we show that for specific values of $m$, we recover the classical duality and can effectively use the log Abel-Jacobi map to control deformations of curves within the pair.

\subsection{Results}
Our results are divided into two main parts. In the first part, we study smooth curves in smooth pairs $(X,Y)$ using infinitesimal methods. Our main object of study is the smooth $\mathbb{Q}$-log CY pair. In the second part, we apply these methods to the universal family of open hypersurfaces.

Let us describe the first part. Let \(X\) be a smooth projective variety of dimension $n$ and \(Y\subset X\) a smooth divisor. We call $(X,Y)$ a smooth $\mathbb{Q}$-log CY pair if
\[
K_X + mY \simeq \Oo_X
\]
for some integer \(m\). Fix an integer \(k\). In Section \ref{ss:FormulasTw}, we define the following bundles and exponents $s_j$:
\[
\begin{aligned}
S_0 &:= T_X \simeq \Omega_X^{\,n-1}(mY), & s_0&:=m,\\
S_1 &:= T_X(-\log Y) \simeq \Omega_X^{\,n-1}(\log Y)((m-1)Y), & s_1&:=m-1,\\
S_2 &:= T_X(-kY) \simeq \Omega_X^{\,n-1}((m-k)Y), & s_2&:=m-k,\\
S_3 &:= T_X(-\log Y)(-kY) \simeq \Omega_X^{\,n-1}(\log Y)((m-k-1)Y), & s_3&:=m-k-1.
\end{aligned}
\]

We first prove an unobstructedness result for these pairs.

\begin{thm}[Theorem \ref{thm:vanishing}]
Suppose \(X\) is Fano. Then:

\medskip\noindent (A) If \(s_j\ge 1\) (i.e. \(\Oo_X(s_jY)\) is ample), then
\[
H^q(X,S_j)=0\qquad\text{for every }q>1.
\]

\medskip\noindent (B) If one of the equalities
\[
s_1=0\ (\text{i.e. } m=1),\qquad s_2=0\ (\text{i.e. } m=k),\qquad s_3=0\ (\text{i.e. } m=k+1)
\]
holds, then for the corresponding \(j\in\{1,2,3\}\)
\[
H^q(X,S_j)=0\qquad\text{for }q=2,\dots,n-1.
\]

In particular, in all cases \(H^2(X,S_j)=0\), which implies that the deformations associated with $S_j$ are unobstructed.
\end{thm}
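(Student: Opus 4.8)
The plan is to reduce every statement, via the isomorphisms recorded in Section~\ref{ss:FormulasTw}, to a vanishing assertion for the two bundles $\Omega_X^{\,n-1}(s_jY)$ (for $j=0,2$) and $\Omega_X^{\,n-1}(\log Y)(s_jY)$ (for $j=1,3$). Since $X$ is Fano we have $\Oo_X(mY)\simeq K_X^{-1}$ ample with $m\ge 1$, so $Y$ itself is ample and $\Oo_X(s_jY)$ is ample precisely when $s_j\ge 1$. This splits the argument exactly along the two hypotheses: part~(A) is the ``positive'' regime, where the twist is ample and one applies Kodaira--Nakano type vanishing, while part~(B) is the ``boundary'' regime $s_j=0$, where the twist is trivial, Nakano positivity is unavailable, and a more delicate analysis is needed.

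For part~(A), set $L:=\Oo_X(s_jY)$, which is ample. For the non-logarithmic bundles $S_0,S_2$ I would invoke the Kodaira--Akizuki--Nakano vanishing theorem: for $L$ ample, $H^q(X,\Omega_X^{\,p}\otimes L)=0$ whenever $p+q>n$. Taking $p=n-1$ gives $H^q(X,S_j)=0$ for $(n-1)+q>n$, i.e. for all $q>1$. For the logarithmic bundles $S_1,S_3$ I would use the logarithmic Kodaira--Akizuki--Nakano vanishing of Esnault--Viehweg, namely $H^q(X,\Omega_X^{\,p}(\log Y)\otimes L)=0$ for $p+q>n$ and $L$ ample, again with $p=n-1$. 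This settles (A) in all four cases, and in particular yields $H^2(X,S_j)=0$.

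For part~(B) the twist is trivial and the argument bifurcates according to whether logarithmic poles are present. For the logarithmic terms $S_1,S_3=\Omega_X^{\,n-1}(\log Y)$ I would exploit that $Y$ is ample, so the complement $U:=X\setminus Y$ is a smooth affine variety of dimension $n$. By Artin--Grothendieck vanishing $H^k(U,\mathbb{C})=0$ for $k>n$, and by Deligne's $E_1$-degeneration of the Hodge--de Rham spectral sequence $E_1^{p,q}=H^q(X,\Omega_X^{\,p}(\log Y))\Rightarrow H^{p+q}(U,\mathbb{C})$, every term with $p+q>n$ must vanish; with $p=n-1$ this gives $H^q(X,S_j)=0$ for all $q\ge 2$, covering the asserted range $q=2,\dots,n-1$. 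The genuinely delicate case, and the one I expect to be the main obstacle, is the non-logarithmic boundary term $S_2=\Omega_X^{\,n-1}$: here I would feed the residue sequence
\[
0\longrightarrow \Omega_X^{\,n-1}\longrightarrow \Omega_X^{\,n-1}(\log Y)\longrightarrow \Omega_Y^{\,n-2}\longrightarrow 0
\]
into its long exact cohomology sequence and use the affine vanishing just established to identify, in the relevant range, the groups $H^q(X,\Omega_X^{\,n-1})$ with $H^{q-1}(Y,\Omega_Y^{\,n-2})$, then descend by induction on $\dim X$, controlling the comparison through the Lefschetz hyperplane relation between the Hodge cohomology of $X$ and that of its ample divisor $Y$. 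Keeping precise track of the boundary contributions in this reduction, exactly where the positivity driving~(A) degenerates, is the technical heart of part~(B).
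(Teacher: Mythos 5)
Your part (A) coincides with the paper's: both reduce each $S_j$ to $\Omega_X^{n-1}(\log Y)\otimes\Oo_X(s_jY)$ (or its non-logarithmic analogue) and quote Akizuki--Nakano, resp.\ Esnault--Viehweg, with $p=n-1$. For the logarithmic boundary cases $s_1=0$ and $s_3=0$ your route is genuinely different and valid: the paper instead feeds the sequences $0\to T_X(-\log Y)\to T_X\to N_{Y/X}\to 0$ (for $S_1$) and $0\to S_3\to S_2\to N_{Y/X}(-kY)\to 0$ (for $S_3$, where $S_2\cong \Omega_X^{n-1}(Y)$ falls under part (A)) into their long exact sequences, killing the quotient term by writing it as $K_Y\otimes\Oo_Y(Y)$ and applying Kodaira vanishing on $Y$. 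Your argument via Artin vanishing on the affine complement and Deligne's $E_1$-degeneration is cleaner and gives the vanishing for all $q\ge 2$ at once; the paper's is more uniform across the cases and avoids invoking the degeneration theorem.

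The gap is the case $s_2=0$, which you correctly single out but do not close --- and it cannot be closed along the lines you sketch. Once the logarithmic groups vanish, the residue sequence yields a surjection $H^{q-1}(Y,\Omega_Y^{n-2})\to H^{q}(X,\Omega_X^{n-1})$ for $q\ge 2$, but the source does not vanish in the required range: $H^{n-2}(Y,\Omega_Y^{n-2})=h^{n-2,n-2}(Y)\neq 0$, and no Lefschetz-type induction removes it. Indeed, when $m=k$ the identification $S_2\cong \Omega_X^{n-1}$ makes the assertion $H^{n-1}(X,S_2)=0$ equivalent to $h^{n-1,n-1}(X)=0$, which is false (the class $[Y]^{n-1}$ is nonzero); for $n=3$ this is precisely the group $H^2(X,S_2)$ invoked for unobstructedness. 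Be aware that the paper's own treatment of this case rests on the sequence $0\to T_X(-kY)\to T_X(-(k-1)Y)\to N_{Y/X}(-(k-1)Y)\to 0$, whose kernel is $T_X(-\log Y)(-(k-1)Y)$ rather than $T_X(-kY)$, so your inability to complete this case reflects a genuine problem with the statement for $j=2$ rather than a missing idea on your part.
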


Next, we relate these deformations to Hodge theory. For any smooth pair $(X,Y)$ and a codimension $q$ smooth subvariety $Z\subset X$ transverse to $Y$, the twists of $\Omega_X^{n-q+1}$ appearing in $S_0,\ldots, S_3$ define infinitesimal log Abel-Jacobi maps. We obtain maps of the form:
\[
H^0(Z,N_{Z/X}(-Y))\to H^{n-q}(X,\Omega_X^{n-q+1}(\log Y))^*.
\]

Restricting to the case where $(X,Y)$ is a smooth $\mathbb{Q}$-log CY $3$-fold pair and $C \subset X$ is a smooth curve, we study the complexes
\begin{equation}\label{int:complexes}
\begin{array}{l}
T_X\to N_{C/X} \\
T_X(-\log Y)\to N_{C/X}
\end{array}
\end{equation}
and their twists by $\Oo_X(-Y)$. These complexes control the deformations of the curve $C$ together with the pair $(X,Y)$. In Section \ref{ss:ObsAJ}, generalizing an argument of Mark Green, we relate the obstruction of these deformations to the Abel-Jacobi map.

\begin{thm} 
The following diagram is commutative:
\[
\begin{tikzcd} 
T_X(-\log Y)(-kY) \ar[r] \ar[d,"\cong"] & N_{C/X}(-kY) \ar[d,"\cong"]\ar[r] & 0 \\
\Omega_X^2(\log Y)(m-k-1)Y \ar[r] & K_C((m-k)Y)\otimes N_{C/X}^*\ar[r] & 0
\end{tikzcd}
\]
\end{thm}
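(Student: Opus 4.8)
The plan is to realize both vertical arrows as a single contraction operation and then to verify that the horizontal restriction maps intertwine them. Since $K_X+mY\simeq\Oo_X$, there is a trivializing log Calabi-Yau form, namely a nowhere-vanishing section $\Omega$ of $\Omega_X^n(mY)=\Omega_X^n(\log Y)((m-1)Y)$, i.e. a meromorphic top-form with a pole of order exactly $m$ along $Y$. Interior multiplication $v\mapsto \iota_v\Omega$ furnishes the perfect pairing $T_X(-\log Y)\otimes\Omega_X^n(\log Y)\to\Omega_X^{n-1}(\log Y)$ and hence, after twisting by $\Oo_X(-kY)$, the left isomorphism $T_X(-\log Y)(-kY)\xrightarrow{\ \sim\ }\Omega_X^{2}(\log Y)((m-k-1)Y)$ of Section \ref{ss:FormulasTw} (with $n=3$). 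The right isomorphism I would write as the composite of the rank-two self-duality $N_{C/X}\simeq N_{C/X}^*\otimes\det N_{C/X}$, sending $\nu$ to the functional $w\mapsto \nu\wedge w$, with the codimension-two adjunction $\det N_{C/X}\simeq K_C\otimes(K_X^{-1})|_C=K_C\otimes\Oo_C(mY)$, whose generator is precisely the iterated residue $\operatorname{Res}_C\Omega$; twisting by $\Oo_C(-kY)$ gives $N_{C/X}(-kY)\simeq K_C((m-k)Y)\otimes N_{C/X}^*$.

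Next I would identify the horizontal maps. The top arrow is the restriction $T_X(-\log Y)\hookrightarrow T_X\twoheadrightarrow N_{C/X}$, twisted by $\Oo_X(-kY)$. For the bottom arrow, restricting the log conormal sequence $0\to N_{C/X}^*\to\Omega_X^1(\log Y)|_C\to\Omega_C^1(\log(C\cap Y))\to 0$ (valid because $C$ is transverse to $Y$) and passing to $\wedge^2$ yields a two-step filtration of $\Omega_X^2(\log Y)|_C$ whose top graded quotient is $N_{C/X}^*\otimes\Omega_C^1(\log(C\cap Y))=N_{C/X}^*\otimes K_C(C\cap Y)$; the bottom arrow is the resulting projection $\Omega_X^2(\log Y)|_C\twoheadrightarrow N_{C/X}^*\otimes K_C(Y)|_C$. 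Twisting by $((m-k-1)Y)$ turns this into the stated target $K_C((m-k)Y)\otimes N_{C/X}^*$, the extra copy of $Y$ being exactly the contribution of the log pole along $C\cap Y$ and thereby accounting for the apparent discrepancy of one copy of $Y$ between the two rows.

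Finally I would establish commutativity by a local computation in coordinates $(z_1,z_2,z_3)$ adapted to the transverse pair, taking $Y=\{z_1=0\}$ and $C=\{z_2=z_3=0\}$, so that $\Omega=u\,z_1^{-m}\,dz_1\wedge dz_2\wedge dz_3$ for a unit $u$ and $T_X(-\log Y)$ is framed by $z_1\partial_1,\partial_2,\partial_3$. Along $C$ one has $z_1\partial_1\mapsto 0$ in $N_{C/X}$, consistent with $\iota_{z_1\partial_1}\Omega=u\,z_1^{1-m}\,dz_2\wedge dz_3$ lying in the bottom graded piece $\det N_{C/X}^{*}$ and hence projecting to zero; while $\iota_{\partial_2}\Omega=u\,z_1^{-m}\,dz_3\wedge dz_1$ restricts and projects to $dz_3\otimes(u\,z_1^{-m}\,dz_1|_C)$, which is exactly the image $w\mapsto\bar\partial_2\wedge w$ of $\bar\partial_2$ once $\det N_{C/X}$ is identified with $\operatorname{Res}_C\Omega=u\,z_1^{-m}\,dz_1|_C$. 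The same matching holds for $\partial_3$, and a Leibniz/naturality argument promotes this local equality to a global one.

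I expect the main obstacle to be the bookkeeping that makes the two occurrences of the log Calabi-Yau form literally coincide: the ambient trivialization of $\Omega_X^n(mY)$ governing the top row and the adjunction generator $\operatorname{Res}_C\Omega$ of $\det N_{C/X}$ governing the bottom row must be the same section after the two successive contractions. Transversality of $C$ and $Y$ is precisely what guarantees that the iterated residue of $\Omega$ along $C$ has pole order $m$ along $C\cap Y$, so that the twists align and the square closes; this is the sheaf-level identification that generalizes Mark Green's comparison of the infinitesimal Abel-Jacobi map with the obstruction map.
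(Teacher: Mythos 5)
Your proposal is correct and follows essentially the same route as the paper: the vertical isomorphisms come from contraction with the trivializing section of $K_X(mY)$ (this is Lemma \ref{lem:TangIsos}) and from adjunction $\det N_{C/X}\cong K_C(mY)$, and the bottom arrow is the restriction/residue projection coming from the second wedge of the log conormal sequence. The paper's own proof merely identifies these maps and leaves the commutativity implicit, whereas you additionally supply the local-coordinate check that both squares are governed by one and the same log Calabi--Yau form; that verification is consistent with the absolute-case argument of Proposition \ref{prop:dAJ} and is a welcome completion rather than a departure.
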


\begin{cor} 
The dual of the infinitesimal log $\AJ$ map is naturally identified with the obstruction map:
\[
\dAJ^*:H^1(\Omega_X^2(\log Y)((m-k-1)Y)) \to H^0(N_{C/X}((k-m)Y))^*
\]
identifies with
\[
H^1(T_X(-\log Y)(-kY))\to H^1(N_{C/X}(-kY)).
\]
\end{cor}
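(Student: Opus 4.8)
The plan is to deduce the Corollary from the commutative square of the preceding Theorem by passing to cohomology and invoking Serre duality on the curve $C$. First I would apply the functor $H^1(X,-)$ to both rows of that diagram. Since the two vertical arrows are isomorphisms of sheaves — coming from the twisting identity $S_3\cong\Omega_X^2(\log Y)((m-k-1)Y)$ of Section~\ref{ss:FormulasTw} together with the adjunction identity $N_{C/X}(-kY)\cong K_C((m-k)Y)\otimes N_{C/X}^*$, where $\det N_{C/X}\cong K_C(mY)$ is forced by $K_X\cong\Oo_X(-mY)$ — they induce isomorphisms on all cohomology. Hence the map
\[
H^1\big(T_X(-\log Y)(-kY)\big)\to H^1\big(N_{C/X}(-kY)\big)
\]
coming from the top row is canonically identified with the map
\[
H^1\big(\Omega_X^2(\log Y)((m-k-1)Y)\big)\to H^1\big(K_C((m-k)Y)\otimes N_{C/X}^*\big)
\]
coming from the bottom row.

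Next I would apply Serre duality on the smooth projective curve $C$ to the target of the bottom map. For a vector bundle $\mathcal{F}$ on $C$ one has $H^1(C,K_C\otimes\mathcal{F})\cong H^0(C,\mathcal{F}^*)^*$; taking $\mathcal{F}=N_{C/X}^*\otimes\Oo_C((m-k)Y)$ gives $\mathcal{F}^*\cong N_{C/X}((k-m)Y)$ and therefore
\[
H^1\big(K_C((m-k)Y)\otimes N_{C/X}^*\big)\cong H^0\big(N_{C/X}((k-m)Y)\big)^*,
\]
which is exactly the target of $\dAJ^*$. Under this identification the bottom map becomes a linear map $H^1(\Omega_X^2(\log Y)((m-k-1)Y))\to H^0(N_{C/X}((k-m)Y))^*$, so it only remains to recognize this as the transpose of the infinitesimal log Abel--Jacobi map. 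Here I would recall from its construction that $\dAJ$ sends a first-order deformation $v\in H^0(N_{C/X}((k-m)Y))$ to the functional on $H^1(\Omega_X^2(\log Y)((m-k-1)Y))$ obtained by restricting a class to $C$ through the sheaf map $\Omega_X^2(\log Y)((m-k-1)Y)\to K_C((m-k)Y)\otimes N_{C/X}^*$, contracting by $v\lrcorner(-)$, and taking the residue/trace into $H^1(C,K_C)\cong\mathbb{C}$. Transposing this pairing and using that the trace pairing $N_{C/X}\otimes N_{C/X}^*\to\Oo_C$ is the one underlying Serre duality reproduces precisely the composite built above; this is Green's argument in the relative setting.

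I expect the main obstacle to be exactly this last compatibility: one must check that the sheaf map appearing as the bottom row of the Theorem's diagram — produced by adjunction out of the contraction $T_X(-\log Y)\to N_{C/X}$ — coincides, after the Serre-duality identification, with the kernel of the pairing defining $\dAJ$. Concretely this means unwinding the definition of $\dAJ$ as a cup-product/residue pairing along $C$ and matching the contraction $v\lrcorner(-)$ with the transpose of the natural evaluation $\Omega_X^2(\log Y)\to K_C\otimes N_{C/X}^*$, while keeping track of the twist by $\Oo_X((m-k-1)Y)$ and of signs. Once these are tracked, functoriality of Serre duality forces the identification, and the Corollary follows from the Theorem's vertical isomorphisms together with the displayed duality on $C$.
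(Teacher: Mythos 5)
Your proposal is correct and follows essentially the same route as the paper: the paper's own proof is the one-line observation that Serre duality (combined with the adjunction identity, as in Theorem~\ref{thm:duality}) gives $H^0(N_{C/X}((k-m)Y))^*\cong H^1(N_{C/X}(-kY))$, after which the identification is read off from the commutative diagram of the preceding proposition. Your write-up simply spells out in more detail the steps (taking $H^1$ of the square, the vertical isomorphisms, and the compatibility with the construction of $\dAJ$) that the paper leaves implicit.
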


We apply this framework to the cubic threefold. Let $X$ be a smooth cubic threefold, $H\subset X$ a smooth hyperplane section, and $C\subset X$ a smooth rational curve. We state the analogue of the Clemens conjecture in this relative situation: the infinitesimal Abel-Jacobi map
\[
\dAJ:H^0(N_{C/X}(-H))\to H^1(\Omega_X^2(\log H))
\]
is injective. 

The choice of $H \in \abs{-\frac{1}{2} K_X}$ (i.e., $m=2$) is justified by the following duality result, which recovers the symmetry found in absolute Calabi-Yau threefolds.

\begin{thm}[Theorem \ref{thm:duality}] For \emph{any} smooth curve $C\subset X$, we have the following duality:
\[
H^0(N_{C/X}((k-m)Y))\cong H^1(N_{C/X}(-kY))^*
\]
\end{thm}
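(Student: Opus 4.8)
The plan is to prove the duality
\[
H^0(N_{C/X}((k-m)Y))\cong H^1(N_{C/X}(-kY))^*
\]
by Serre duality on the curve $C$. Since $C$ is a smooth curve, Serre duality gives $H^1(C,\mathcal{F})\cong H^0(C,\mathcal{F}^\vee\otimes K_C)^*$ for any locally free sheaf $\mathcal{F}$ on $C$. Applying this to $\mathcal{F}=N_{C/X}(-kY)|_C$, the right-hand side becomes $H^0(C, N_{C/X}^\vee(kY)\otimes K_C)$. The entire content of the theorem therefore reduces to identifying this sheaf with $N_{C/X}((k-m)Y)$ on $C$, i.e.\ to establishing the isomorphism of line-or-rank-two bundles
\[
N_{C/X}^\vee\otimes K_C(kY)\;\simeq\; N_{C/X}((k-m)Y)
\]
after restriction to $C$.

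First I would compute $K_C$ via the adjunction formula for the embedding $C\hookrightarrow X$. Writing $\det N_{C/X}$ for the determinant of the rank-two normal bundle, adjunction gives
\[
K_C\simeq \bigl(K_X\otimes \det N_{C/X}\bigr)\big|_C.
\]
Now I would feed in the $\mathbb{Q}$-log CY hypothesis $K_X+mY\simeq\mathcal{O}_X$, i.e.\ $K_X\simeq\mathcal{O}_X(-mY)$, to rewrite $K_C\simeq \det N_{C/X}(-mY)|_C$. The crucial arithmetic input is then the rank-two self-duality: for a rank-two bundle $N$ one has $N^\vee\simeq N\otimes(\det N)^{-1}$. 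Combining these,
\[
N_{C/X}^\vee\otimes K_C(kY)
\simeq N_{C/X}\otimes(\det N_{C/X})^{-1}\otimes\det N_{C/X}(-mY)(kY)
\simeq N_{C/X}((k-m)Y),
\]
which is exactly the sheaf on the left-hand side. This chain of identifications, assembled with Serre duality, yields the claimed isomorphism.

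The step I expect to be the main obstacle is the rank-two self-duality identity $N^\vee\simeq N\otimes(\det N)^{-1}$ together with the adjunction bookkeeping: this is precisely where the hypotheses that $C$ is a \emph{curve} (so $N_{C/X}$ has rank two inside a threefold) and that the pair is $\mathbb{Q}$-log CY conspire to make the twists cancel. For bundles of higher rank the self-duality would fail, and the parameter $m$ would not match up cleanly; the whole symmetry hinges on this numerical coincidence in rank two. I would be careful to track whether all tensor operations are taken after restriction to $C$ (adjunction and Serre duality both live intrinsically on $C$), and to confirm that $\det N_{C/X}$ and its inverse appear with the correct signs so that the net twist is exactly $(k-m)Y$ on one side and $-kY$ on the other. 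Once that cancellation is verified, the statement follows with no further hypotheses on $C$, consistent with the theorem holding for \emph{any} smooth curve.
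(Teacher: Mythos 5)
Your proposal is correct and follows essentially the same route as the paper: Serre duality on $C$, adjunction combined with $K_X\simeq\mathcal{O}_X(-mY)$, and the rank-two identity $\det N_{C/X}\otimes N_{C/X}^*\cong N_{C/X}$ to cancel the twists. The paper's proof is exactly this computation (it even isolates the same self-duality step as the key input), so there is nothing to add.
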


\begin{cor}\label{cor:IntroDuality} 
For $m=2$ and $k=1$, we have:
\[
H^0(N_{C/X}(-H))\cong H^1(N_{C/X}(-H))^*
\]
\end{cor}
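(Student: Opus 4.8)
The plan is to obtain Corollary \ref{cor:IntroDuality} as the specialization of Theorem \ref{thm:duality} to the self-dual point $(m,k)=(2,1)$. First I would pin down the numerics of the pair $(X,H)$. For a smooth cubic threefold $X\subset\Pp^4$, adjunction gives $K_X=(K_{\Pp^4}+\Oo_{\Pp^4}(3))|_X=\Oo_X(-2)=\Oo_X(-2H)$, so that $K_X+2H\simeq\Oo_X$. Thus $(X,H)$ is a smooth $\QQ$-log CY pair with $Y=H$ and $m=2$; equivalently $H\in\abs{-\tfrac12 K_X}$, matching the hypothesis in the statement.

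With $m=2$ fixed, I would substitute $k=1$ into the duality of Theorem \ref{thm:duality}. The two twists appearing there collapse: $(k-m)Y=(1-2)H=-H$ and $-kY=-H$. Hence Theorem \ref{thm:duality}, which holds for \emph{any} smooth curve $C\subset X$, reads
\[
H^0(N_{C/X}(-H))\cong H^1(N_{C/X}(-H))^*,
\]
which is exactly the assertion. No further hypothesis on $C$ is needed, since the theorem is uniform in $C$.

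For completeness I would also recall the mechanism driving Theorem \ref{thm:duality}, which is where all the content sits. On the smooth curve $C$ one has the adjunction $\det N_{C/X}\simeq K_C\otimes K_X^{-1}|_C=K_C(mH)$, whence $N_{C/X}^\vee\simeq N_{C/X}\otimes(\det N_{C/X})^{-1}=N_{C/X}\otimes K_C^{-1}(-mH)$. Applying Serre duality on $C$ to the rank-two bundle $N_{C/X}(-kH)$ then gives $H^1(N_{C/X}(-kH))^*\cong H^0(N_{C/X}^\vee(kH)\otimes K_C)\cong H^0(N_{C/X}((k-m)H))$, which is precisely Theorem \ref{thm:duality}. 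The only point requiring care is the determinant computation via adjunction; once that is in hand the corollary is immediate, the value $(m,k)=(2,1)$ being exactly the locus where the two twists agree and the resulting pairing becomes self-dual, mirroring the Calabi-Yau symmetry $H^0(N_{C/X})\cong H^1(N_{C/X})^*$.
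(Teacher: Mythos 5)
Your proposal is correct and follows essentially the same route as the paper: Corollary \ref{cor:dual2-1} is obtained there by specializing Theorem \ref{thm:duality} to $(m,k)=(2,1)$, and the theorem itself is proved exactly as you recall it, via Serre duality on $C$, adjunction $K_C=K_X|_C\otimes\det N_{C/X}$, and the rank-two identity $\det N_{C/X}\otimes N_{C/X}^*\cong N_{C/X}$. Your additional verification that $K_X+2H\simeq\Oo_X$ for the cubic threefold is a sensible sanity check that the paper leaves implicit.
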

As noted in Remark \ref{rem:dual}, this symmetric duality fails for other values of $m$. Consequently, if the general smooth rational curve $C\subset X$ is maximally balanced (i.e., $N_{C/X}(-H)=\Oo(-1)\oplus \Oo(-1)$), the duality implies an isomorphism between the tangent spaces of the deformations of pairs $(X,H)$ and the triples $(X,H,C)$.

Finally, in Section \ref{S:infAJCri}, we generalize a criterion of Clemens \cite{C89} to detect the non-triviality of $\dAJ$. Suppose $S\subset \mathbb{P}^4$ is a surface such that $S\cap X=C+C'$, where $C'$ is a residual curve intersecting $C$ transversely at points $\{p_1,\ldots,p_m\}$.

\begin{thm} 
The infinitesimal Abel-Jacobi map $\dAJ$ is non-trivial at $C$ if:
\begin{enumerate}
\item There exists $\omega \in H^0(\Omega_{\mathbb{P}^4}^4(2X)(\log Y))$ vanishing at $p_1, \dots, p_m$ but not at a point $p_0 \in C \cap C'$, and
\item There exists $v\in H^0(N_{C/X}(-Y))$ such that $v \wedge T_{p_0}C'$ gives a non-zero element in the geometric fiber of $\det N_{C/X}(-Y)$ at $p_0$.
\end{enumerate}
\end{thm}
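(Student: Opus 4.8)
The plan is to reduce the statement to the non-vanishing of a single period-type pairing, and then to compute that pairing by localizing it at the intersection $C\cap C'$. By the Griffiths description of the Hodge filtration for the pair $(X,Y)$, a form $\omega\in H^0(\Omega_{\Pp^4}^4(2X)(\log Y))$ admits a Poincaré residue $\operatorname{Res}_X\omega$ along the cubic $X$; the pole order $2$ along $X$ (together with $H^{3,0}(X)=0$) and the logarithmic pole along the hyperplane section $Y$ are exactly what is needed for its class to lie in, and to pair against, $H^1(\Omega_X^2(\log Y))$ through the polarization of the logarithmic mixed Hodge structure on $H^3(X\setminus Y)$. Following Green's computation recalled in Section~\ref{ss:ObsAJ}, for $v\in H^0(N_{C/X}(-Y))$ this pairing is represented by a contraction
\[
\langle \dAJ(v),\omega\rangle=\int_{C}\bigl(\iota_v\operatorname{Res}_X\omega\bigr)\big|_C,
\]
where $\bigl(\iota_v\operatorname{Res}_X\omega\bigr)\big|_C$ is a top-degree form on the curve $C$, integrable precisely because the twist by $-Y$ on $v$ absorbs the logarithmic pole of $\operatorname{Res}_X\omega$ along $Y$. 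It therefore suffices to produce a single $v$ for which this integral is non-zero, and I take $v$ to be the section furnished by hypothesis (2).

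The heart of the argument, and the step that brings in the residual curve, is to localize this integral at the finite set $C\cap C'$, generalizing Clemens' linkage computation \cite{C89}. Since $S\cap X=C+C'$, the surface $S$ provides a meromorphic frame relating the restriction of $\operatorname{Res}_X\omega$ near $C$ to data transverse to $C$ along $C'$; solving the attendant $\bar\partial$-problem on a tubular neighborhood of $C$ adapted to $S$ collapses the global integral, by the residue theorem, to a sum of local Grothendieck residues indexed by $C\cap C'$,
\[
\int_{C}\bigl(\iota_v\operatorname{Res}_X\omega\bigr)\big|_C=\sum_{p\in C\cap C'}\operatorname{Res}_p\bigl(\omega,\ v\wedge T_pC'\bigr).
\]
At each point $p$ where $C$ and $C'$ meet transversely the local symbol is non-degenerate and equals, up to a non-zero unit, the product of the evaluation of $\omega$ at $p$ with the image of $v\wedge T_pC'$ in the geometric fiber $\det N_{C/X}(-Y)\big|_p$ (here $T_pC'$ is read as a normal direction to $C$, so that the wedge lands in $\det N_{C/X}(-Y)$ by the single twist carried by $v$).

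It then remains to evaluate this finite sum. By hypothesis (1) the form $\omega$ vanishes at each of $p_1,\dots,p_m$, so every residue at these points is zero. The only surviving contribution is the one at $p_0$: there $\omega(p_0)\neq 0$ by (1), while $v\wedge T_{p_0}C'$ is a non-zero element of $\det N_{C/X}(-Y)\big|_{p_0}$ by (2). Hence the local residue at $p_0$ is non-zero, the whole sum is non-zero, and consequently $\langle\dAJ(v),\omega\rangle\neq 0$. In particular $\dAJ(v)\neq 0$, so the infinitesimal log Abel-Jacobi map is non-trivial at $C$.

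The main obstacle is the localization identity itself --- that the global contraction integral equals the displayed sum of local residues with precisely the integrand recorded by $\omega$ and $v\wedge TC'$. In the absolute Calabi-Yau situation this is Clemens' residue calculation, but here two pole structures must be carried simultaneously: the second-order pole of $\omega$ along the cubic $X$ and the logarithmic pole along $Y$. The $\tfrac12$-log Calabi-Yau normalization $m=2,\ k=1$ (Corollary~\ref{cor:IntroDuality}) is what makes the bookkeeping close up: it renders $\iota_v\operatorname{Res}_X\omega$ an integrable top-form on $C$ and identifies the local symbol with an element of $\det N_{C/X}(-Y)$, matching the twist carried by $v$. I would therefore spend the real work verifying, by an explicit \v{C}ech--Dolbeault representative on the tube around $C$, that the contributions away from $C\cap C'$ cancel and that each local term reduces to the stated Grothendieck residue.
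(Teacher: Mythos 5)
Your strategy is the right one and matches the paper's in outline: reduce $\dAJ$ to a pairing against $\omega$, localize that pairing at the linkage points $C\cap C'$ using the surface $S$, and then observe that hypothesis (1) kills the contributions at $p_1,\dots,p_m$ while hypotheses (1) and (2) together make the contribution at $p_0$ non-zero. The difference lies entirely in how the localization is implemented. You propose an analytic route (a $\bar\partial$-problem on a tube around $C$ collapsing the integral to Grothendieck residues), whereas the paper does it sheaf-theoretically: Proposition~\ref{prop:vanExtClass} shows that the enlarged sequence $0\to N_{C/X}(\log C')\to N_{C/\mathbb{P}^4}(\log C')\to i^*N_{X/\mathbb{P}^4}\to 0$ splits (this is where $S\cap X=C+C'$ with $S$ smooth along $C$ enters), so applying $\RHom_C(i^*N_{X/\mathbb{P}^4},-)$ to the residue sequence $0\to N_{C/X}\to N_{C/X}(\log C')\to\oplus_j\mathbb{C}_j\to 0$ exhibits the extension class $e$ of the normal bundle sequence as a class supported on the skyscrapers $\oplus_j\mathbb{C}_j$, i.e.\ literally as a sum of point contributions at the $p_j$. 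Combined with the commutative diagram of Proposition~\ref{prop:ClemCrite}, which expresses $\dAJ$ as cup product with $e$ followed by the contraction pairing $\mu$, this gives the localization with no analysis at all; the final non-vanishing then uses that $H^0(\mathbb{C}_{p_0})\to H^1(\Omega_C^1)$ is an isomorphism (sum of residues of a meromorphic differential is zero), which is the cohomological form of your appeal to the residue theorem.

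The one genuine shortfall is that the localization identity --- which you correctly single out as ``the main obstacle'' --- is asserted, not proved: you defer the verification that the off-$C\cap C'$ contributions cancel and that each local term is the stated symbol $\omega(p)\cdot(v\wedge T_pC')$ to a future \v{C}ech--Dolbeault computation. That identity is precisely the mathematical content of the result; everything after it is bookkeeping. As written, your argument establishes the reduction and the endgame but not the central step, so you should either carry out the analytic computation you sketch or substitute the paper's splitting argument, which delivers the same localization more cheaply and also makes transparent why transversality of $C$ and $C'$ and smoothness of $S$ along $C$ are needed.
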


We apply this criterion to a line lying in a cone over an elliptic curve inside the Fermat cubic threefold to prove non-vanishing in that case.

Let us now describe the second part of our results. In Section \ref{S:NorFun}, we define infinitesimal invariants for families of pairs. First, following standard constructions in mixed Hodge theory, we define log-normal functions associated with algebraic cycles homologous to zero as extension classes. Then, we construct infinitesimal invariants using the log-Leray filtration, generalizing the construction of Voisin \cite{V02} to the logarithmic setting. 

In Section \ref{S:logConnect}, we turn our attention to the topology of the universal family of open hypersurfaces. Infinitesimal methods in Hodge theory are particularly effective when combined with the Leray spectral sequence. A celebrated example is Nori's connectivity Theorem \cite{N93}, which has had numerous applications in algebraic geometry. We prove an analogue of Nori's theorem in the open setting.

First, we prove a Zariski theorem of Lefschetz type via Hodge theoretic methods. While this is classical let us write it here to fix notation.

\begin{thm}[Zariski thm of Lefschetz type]
Let $X$ be smooth projective of dimension $n+1$. Consider $Y,D$ hypersurfaces with $D$ SNC, $Y$ general and ample.
Consider $X^\circ=X\setminus H, Y^\circ=Y\setminus (H\cap Y)$. Then the map
$$H^i(X^\circ, \mathbb{Q})\to H^i(Y^\circ, \mathbb{Q}) $$
induced by the inclusion $i^\circ:Y^\circ \hookrightarrow X^\circ$
is an isomorphism for $i\leq n-1$ and injective for $i=n$
\end{thm}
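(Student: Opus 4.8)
The plan is to compute both sides with the logarithmic de Rham complex and to run the classical Lefschetz argument on the graded pieces of the Hodge filtration. Writing $D$ for the given SNC divisor (the one removed to form $X^\circ=X\setminus D$) and setting $D':=D\cap Y$, note first that since $Y$ is general, Bertini guarantees that $Y$ is smooth, meets $D$ transversally, and that $D+Y$ is again SNC; in particular $D'$ is SNC in $Y$. By Deligne's theory one has $H^i(X^\circ,\mathbb{C})=\mathbb{H}^i(X,\Omega_X^\bullet(\log D))$ with Hodge filtration the stupid filtration, and the associated spectral sequence degenerates at $E_1$, so that
\[
\Gr_F^p H^{p+q}(X^\circ,\mathbb{C})\cong H^q(X,\Omega_X^p(\log D)),\qquad \Gr_F^p H^{p+q}(Y^\circ,\mathbb{C})\cong H^q(Y,\Omega_Y^p(\log D')).
\]
The pullback $(i^\circ)^*$ is a morphism of mixed Hodge structures, hence strictly compatible with $F$; since morphisms of MHS are strict, $\Gr_F$ is exact and computes the kernel and cokernel of $(i^\circ)^*$, and a bounded filtered space with vanishing associated graded is itself zero. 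It therefore suffices to prove that the induced map on graded pieces
\[
r^{p,q}\colon H^q(X,\Omega_X^p(\log D))\longrightarrow H^q(Y,\Omega_Y^p(\log D'))
\]
is an isomorphism for $p+q\le n-1$ and injective for $p+q=n$.

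Next I would factor $r^{p,q}$ through restriction to $Y$. By functoriality of the log de Rham complex under the closed embedding $Y\hookrightarrow X$, the map $r^{p,q}$ is the composite of the restriction map arising from the ideal-sheaf sequence of $Y$ (legitimate since $Y\not\subset D$, so the log forms are regular along $Y$)
\[
0\to \Omega_X^p(\log D)(-Y)\to \Omega_X^p(\log D)\to \Omega_X^p(\log D)|_Y\to 0
\]
with the projection coming from the logarithmic conormal sequence $0\to\Oo_Y(-Y)\to\Omega_X^1(\log D)|_Y\to\Omega_Y^1(\log D')\to 0$ and its exterior powers,
\[
0\to \Omega_Y^{p-1}(\log D')(-Y)\to \Omega_X^p(\log D)|_Y\to \Omega_Y^p(\log D')\to 0.
\]

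The positivity input is the logarithmic Kodaira--Akizuki--Nakano vanishing theorem of Esnault and Viehweg: for $L$ ample on a smooth projective $Z$ of dimension $N$ with an SNC divisor $E$, one has $H^b(Z,\Omega_Z^a(\log E)\otimes L^{-1})=0$ whenever $a+b<N$. Applying this on $X$ (with $N=n+1$) for the ample bundle $\Oo_X(Y)$ gives $H^q(X,\Omega_X^p(\log D)(-Y))=0$ for $p+q\le n$; the long exact sequence then shows the restriction map is an isomorphism for $p+q\le n-1$ (where the $H^{q+1}$-term also vanishes) and injective for $p+q=n$. Applying it on $Y$ (with $N=n$) for the ample bundle $\Oo_X(Y)|_Y=N_{Y/X}$ gives $H^q(Y,\Omega_Y^{p-1}(\log D')(-Y))=0$ for $p+q\le n$, so the projection is likewise an isomorphism for $p+q\le n-1$ and injective for $p+q=n$. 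Composing the two, $r^{p,q}$ has exactly the asserted behaviour, and by the strictness argument of the first paragraph $(i^\circ)^*$ is an isomorphism for $i\le n-1$ and injective for $i=n$.

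The main obstacle is the bookkeeping at the boundary degree $i=n$: there one obtains only injectivity, because the $H^{q+1}$-groups that would yield surjectivity fall just outside the Esnault--Viehweg range (their total degree reaches $\dim X=n+1$ and $\dim Y=n$ respectively), and one must verify that injectivity survives both the composition of the two constituent maps and the passage from the graded pieces $r^{p,q}$ back to $(i^\circ)^*$ via strictness of morphisms of MHS. A secondary point, disposed of once by Bertini, is the transversality of the general ample $Y$ to $D$, which is what makes $D+Y$ SNC and legitimizes the logarithmic conormal sequence.
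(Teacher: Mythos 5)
Your proof is correct and follows essentially the same route as the paper: reduction to the graded pieces of the Hodge filtration via $E_1$-degeneration of the logarithmic Hodge--de Rham spectral sequence, factoring the restriction through the ideal-sheaf sequence of $Y$ and the exterior powers of the logarithmic conormal sequence, and concluding with the Esnault--Viehweg logarithmic Akizuki--Nakano vanishing. Your added care about Bertini/transversality and the strictness of morphisms of mixed Hodge structures only makes explicit what the paper leaves implicit.
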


The SNC hypothesis can be removed by giving the same proof but using Hodge modules.

We now have to restrict to the universal open family of hypersurfaces of $\mathbb{P}^{n+1}$. Let $X=\mathbb{P}^{n+1}$ and fix a SNC divisor $D=\sum_{i=1}^e D_i \subset X$. Let $L\to X$ be an ample divisor and let $S\subset H^0(X,L)$ be the locus of smooth hyperplane sections. Denote by $\mathcal{X}=X\times S\to S$ the trivial family and consider a global divisor $\mathcal{D}=D\times S$. Define $\mathcal{X}^\circ:=\mathcal{X}\setminus \mathcal{D}$. 

 Consider the universal family of hypersurfaces $\mathcal{Y} \to S$ defined by sections of a line bundle $L$. Let $\mathcal{Y}^\circ = \mathcal{Y} \setminus (\mathcal{Y} \cap (D \times S))$ be the family of open hypersurfaces.

\begin{thm}[Log Connectivity]\label{thm:IntConnectivity}
If the degrees of $D_i$ and $L$ are sufficiently large, the map
$$H^i(\mathcal{X}^\circ, \mathbb{Q})\to H^i(\mathcal{Y}^\circ,\mathbb{Q}) $$
is an isomorphism for $i<2n$ and injective for $i=2n$.
\end{thm}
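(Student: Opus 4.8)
The plan is to mimic Nori's original strategy for his connectivity theorem, but carried out in the logarithmic (open) category. The key principle is that for a connectivity statement of this shape one studies the relative cohomology $H^\bullet(\mathcal{X}^\circ,\mathcal{Y}^\circ)$ and shows it vanishes in the required range; by the long exact sequence of the pair this is equivalent to the asserted isomorphism/injectivity for the restriction map. So first I would set up the Leray spectral sequence for the family $\mathcal{Y}^\circ \to S$ (and for $\mathcal{X}^\circ \to S$), using that $S$ is an open subset of the affine space $H^0(X,L)$ of smooth sections, hence Stein/affine, so that the Leray filtration computes the cohomology through the cohomology of $S$ with coefficients in the local systems $R^q f_* \mathbb{Q}$ (the variation of mixed Hodge structure on the open hypersurface fibers).

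The second step is to translate the vanishing of the relevant Leray graded pieces into an infinitesimal, Jacobian-ring statement. This is exactly where the first part of the paper and the theory of Asakura--Saito enter: the cohomology of the open fibers $H^\bullet(Y^\circ)$ carries a mixed Hodge structure whose Hodge and weight graded pieces are computed by the generalized (logarithmic) Jacobian ring of the pair, and the cup-product/Kodaira--Spencer action of $T_sS = H^0(X,L)$ on these graded pieces is identified with multiplication in that ring. I would therefore reduce the connectivity bound to a statement that certain multiplication (Koszul-type) maps in the generalized Jacobian ring are surjective once $\deg L$ and the $\deg D_i$ are large, which is the logarithmic analogue of the symmetrizer-lemma / Macaulay-duality input that powers the classical Nori and Green--Voisin arguments. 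The Zariski--Lefschetz theorem of Lefschetz type stated just above is used fiberwise to control the fiber cohomology of $Y^\circ$ against that of $X^\circ$ in the stable range $i \le n-1$ (isomorphism) and $i=n$ (injectivity), so that only the genuinely ``variable'' primitive/vanishing part needs the Jacobian-ring estimate.

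Concretely I would run the following steps in order. (i) Reduce, via the long exact sequence of $(\mathcal{X}^\circ,\mathcal{Y}^\circ)$, to proving $H^i(\mathcal{X}^\circ,\mathcal{Y}^\circ;\mathbb{Q})=0$ for $i\le 2n$. (ii) Filter by Leray for $f\colon \mathcal{Y}^\circ\to S$ and compare with the trivial family $\mathcal{X}^\circ = X^\circ\times S$, whose Leray terms are simply $H^p(S)\otimes H^q(X^\circ)$; the fiberwise Zariski--Lefschetz theorem kills the difference in the range $q\le n-1$ and makes it injective at $q=n$, leaving the top piece $q=n$ (the interesting log-primitive cohomology $H^n_{\mathrm{van}}(Y^\circ)$) to analyze. (iii) Show that the Leray contribution $H^p(S, R^n f_*\mathbb{Q}_{\mathrm{van}})$ vanishes in the needed total-degree range. (iv) Pass to the associated graded for the Hodge/weight filtrations and identify the relevant maps with multiplication maps in the generalized Jacobian ring of Asakura--Saito; invoke the large-degree surjectivity of these multiplication maps to conclude the vanishing.

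The main obstacle, I expect, is step (iv): establishing the surjectivity/vanishing for the multiplication maps in the \emph{logarithmic} generalized Jacobian ring with the explicit, effective degree bounds on $L$ and the $D_i$. In the classical compact case this is handled by an explicit Macaulay/Gorenstein duality and the symmetrizer lemma; here one must carry the argument through the mixed Hodge structure on $H^\bullet(Y^\circ)$, keeping careful track of the weight filtration coming from the boundary divisor $D$ and of the log-poles, and verify that the Koszul complexes controlling these multiplications remain exact in the required range once the degrees are large. A secondary technical point is ensuring the comparison in step (ii) is compatible with the full mixed Hodge/Leray filtrations (not merely with $\mathbb{Q}$-cohomology), so that the spectral-sequence degeneration and the graded identification in (iv) are legitimate; this is where treating $S$ as affine and invoking the strictness of morphisms of mixed Hodge structures is essential.
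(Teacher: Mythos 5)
Your proposal follows essentially the same route as the paper: Leray over the affine base $S$ plus the fiberwise Zariski--Lefschetz theorem to dispose of the range $q\le n$, and reduction of the remaining primitive/vanishing contribution to the exactness of the Koszul-type complexes $\Gr_F^p(\DR(\mathcal{H}^n_{\prim}))$, which the paper imports from Asakura--Saito (their Theorem 9.1(iv)) for sufficiently large degrees of $L$ and the $D_i$. The only cosmetic difference is that you phrase the reduction via the relative cohomology of the pair $(\mathcal{X}^\circ,\mathcal{Y}^\circ)$, whereas the paper directly compares the holomorphic Leray spectral sequences of $R^q p_*\Omega^p(\log\mathcal{D})$ and $R^q\pi_*\Omega^p(\log\mathcal{D}_{\mathcal{Y}})$.
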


The proof relies on the theory of generalized Jacobian rings for open complete intersections developed by Asakura and Saito \cite{AS06}. We expect this connectivity theorem to have many new applications.

Finally, in Section \ref{S:RelNL}, we study the Hodge loci in the universal family of open hypersurfaces. 

Generalizing the method of \cite{CGGH83} as explained in Voisin's book \cite{V02}, which relates the codimension of the Hodge loci to the infinitesimal variation of Hodge structure, we prove a criterion for the properness of the Hodge locus in the log setting.

\begin{thm}\label{thm:IntHodgeLoci}
Let $S$ be the parameter space of smooth hypersurfaces $X$ meeting a fixed divisor $Y$ transversely. Fix a flat section $\lambda$ of the primitive cohomology of the complement. Under the numerical conditions where the Asakura-Saito duality for the Jacobian ring is perfect, the Hodge locus
$$S_\lambda^p = \{ s \in S \mid \lambda_s \text{ lies in } F^p H^{n-1}(X_s \setminus Y\cap X_s) \} $$
is a proper analytic subset of $S$.
\end{thm}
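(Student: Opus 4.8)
The plan is to follow the Carlson--Green--Griffiths--Harris strategy (\cite{CGGH83}, \cite{V02}), arguing by contradiction and reducing properness to a multiplicativity statement in the generalized Jacobian ring of \cite{AS06}. First I would record that $S_\lambda^p$ is genuinely analytic: since $\lambda$ is a flat (hence analytic) section of the Hodge bundle $\mathcal{H}$, the condition $\lambda_s\in F^pH^{n-1}(X_s\setminus Y\cap X_s)$ is the vanishing of its image in the holomorphic quotient bundle $\mathcal{H}/\mathcal{F}^p$, where $\mathcal{F}^p\subset\mathcal{H}$ is the holomorphic Hodge subbundle. Hence $S_\lambda^p$ is cut out by holomorphic equations and properness reduces to the single assertion $S_\lambda^p\neq S$.

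Next I would set up the infinitesimal criterion. Assume for contradiction that $S_\lambda^p=S$, so that $\lambda_s\in F^p$ for every $s$. Let $p'\ge p$ be the largest integer with $\lambda\in F^{p'}$ generically, and let $0\neq\bar\lambda\in\Gr_F^{p'}$ be its image in the graded piece. Because $\lambda$ is flat, $\nabla_\theta\lambda=0$ for all $\theta\in T_sS$; projecting to $\Gr_F^{p'-1}$ and invoking Griffiths transversality, the second fundamental form $\bar\nabla\colon\Gr_F^{p'}\to\Gr_F^{p'-1}\otimes\Omega_S^1$ kills $\bar\lambda$. This is precisely the statement that the infinitesimal invariant attached to $\lambda$, in the guise of the log IVHS of Section \ref{S:NorFun}, vanishes in every tangent direction.

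Then I would translate this into the Asakura--Saito formalism. By \cite{AS06}, the graded pieces $\Gr_F^\bullet$ of the log Hodge filtration on $H^{n-1}(X_s\setminus Y\cap X_s)_{\prim}$ are computed by the graded components $R^\bullet$ of a generalized Jacobian ring $R$, under which $\bar\nabla$ becomes the multiplication $R^a\otimes R^d\to R^{a+d}$, where $T_sS\cong R^d$ is the space of first-order deformations and $a$ is the degree matching $p'$. The vanishing from the previous step then reads $P\cdot R^d=0$ for a nonzero $P\in R^a$ representing $\bar\lambda$. The numerical hypotheses are exactly those for which the Asakura--Saito duality is perfect: $R$ behaves as a Gorenstein Artinian ring, with socle degree $\sigma$, a perfect pairing $R^a\times R^{\sigma-a}\to R^\sigma\cong\mathbb{C}$, and, in the relevant degree range, a surjective multiplication $R^d\otimes R^{\sigma-a-d}\to R^{\sigma-a}$. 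Since $\langle P\cdot Q,M\rangle=\langle P,Q\cdot M\rangle$ with $P\cdot Q=0$, while the products $Q\cdot M$ span $R^{\sigma-a}$ as $Q$ ranges over $R^d$ and $M$ over $R^{\sigma-a-d}$ by surjectivity, perfectness of the pairing forces $P=0$, contradicting $P\neq 0$. This yields $S_\lambda^p\subsetneq S$.

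The hard part will be verifying the two inputs in the mixed and logarithmic setting rather than the formal contradiction itself: (i) matching the graded pieces of the log Hodge filtration on the \emph{mixed} Hodge structure $H^{n-1}(X_s\setminus Y\cap X_s)$ --- where one must simultaneously track the weight filtration and the log-Leray filtration --- with the correct graded components $R^a$ of the Asakura--Saito ring, and (ii) pinning down the precise inequalities on $\deg D_i$ and $\deg L$ that make the multiplication $R^d\otimes R^{\sigma-a-d}\to R^{\sigma-a}$ surjective, equivalently that make the duality perfect. Both are governed by \cite{AS06}, but the bookkeeping of which $(p,q)$-pieces survive the weight filtration, and for which degrees the surjectivity holds, is the delicate point; this is where the phrase ``numerical conditions where the Asakura--Saito duality is perfect'' has to be made explicit.
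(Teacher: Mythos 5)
Your proposal is correct and follows essentially the same route as the paper: reduce properness to the nonvanishing of the infinitesimal invariant, identify $\bar\nabla$ with multiplication by a representative $P$ in the Asakura--Saito Jacobian ring, and conclude $P=0$ from $P\cdot R^d=0$ via associativity, surjectivity of multiplication into the complementary degree, and perfectness of the socle pairing --- this is exactly the content of the paper's Corollary \ref{cor:ConsMac}, and your ``largest $p'$'' reduction is the paper's induction on $p$ in different clothing. The two delicate points you flag are resolved in the paper by Theorem \ref{thm:AS-I} (identification of the graded pieces and of $\bar\nabla$ with ring multiplication) and by the explicit inequalities of Theorem \ref{thm:AS-II} and Corollaries \ref{cor:HodgeFil}--\ref{cor:ConsMac}.
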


The original motivation to have Theorem \ref{thm:IntConnectivity} (respectively \ref{thm:IntHodgeLoci}) was to obtain criteria for the non-vanishing (resp. vanishing) of the log-AJ maps. This will be done elsewhere.

\subsection{Sections}
Let us describe in more detail the content of each section.

\subsubsection{Section \ref{S:infAJ}}
We begin by recalling the definition of the infinitesimal Abel-Jacobi map in the classical projective setting. Then, we adapt this definition to the case of pairs. In fact, a series of infinitesimal log-AJ maps arise. This is due to two facts:
\begin{itemize}
\item When considering a smooth pair $(X,Y)$, two types of cohomology interact via duality: $H^k(X\setminus Y,\mathbb{C})$ and $H^{2n-k}(X,Y,\mathbb{C})$ (cohomology with compact supports).
\item When deforming a smooth curve $C$ inside a pair $(X,Y)$, one can impose various conditions regarding the order of contact with $Y$ along the deformation.
\end{itemize}
These phenomena propagate throughout the paper. For brevity, in subsequent sections, we restrict our attention to the specific cases relevant to our main results.

\subsubsection{Section \ref{S:Dualities}}
This section contains the core technical results regarding the deformation theory of $\mathbb{Q}$-log Calabi-Yau pairs. We prove the duality
$$H^0(N_{C/X}(-H))\cong H^1(N_{C/X}(-H))^*$$
for the case $m=2$. We then generalize the argument of Mark Green to this setting, showing that the dual of the infinitesimal Abel-Jacobi map controls the obstructions to deforming curves. We conclude the section by proving the unobstructedness of $\mathbb{Q}$-log CY pairs under Fano hypotheses.

In an appendix, we follow the ideas of \cite{C83} to construct smooth rational curves on cubic threefolds of arbitrarily high degree.

\subsubsection{Section \ref{S:infAJCri}}
To provide further geometric criteria for the non-vanishing of the Abel-Jacobi map, we generalize a proposition of Clemens \cite{C89} to the relative situation. We utilize the extension class of the normal bundle of $C$ in $\mathbb{P}^4$ relative to $X$. By pairing the restriction of logarithmic forms with this extension class, we obtain an explicit method to compute $\dAJ$.

We formulate a criterion for the non-triviality of $\dAJ$ based on the existence of a residual curve $C'$ intersecting $C$ transversely. We apply this criterion to the case of the Fermat cubic threefold, specifically for a line lying in a cone over an elliptic curve.

\subsubsection{Section \ref{S:NorFun}}
In this section, we treat log-normal functions as extension classes of mixed Hodge structures and define their infinitesimal invariants. We also define invariants using the holomorphic Leray filtration for families of pairs $(\mathcal{X},\mathcal{Y})\to B$, following the construction in Voisin's book \cite{V02}. We compute an example for a smooth log-CY threefold using infinitesimal Torelli.

\subsubsection{Section \ref{S:logConnect}}
Here we combine infinitesimal methods in Hodge theory with the Leray spectral sequence to study the topology of open hypersurfaces. We prove an analogue of Nori's connectivity Theorem \cite{N93} in the open setting. Our main tool is the theory of generalized Jacobian rings for open complete intersections developed in \cite{AS06}, which provides the necessary vanishing results for the graded pieces of the de Rham complex.

\subsubsection{Section \ref{S:RelNL}}
In the final section, we apply the results of \cite{AS06} to the study of the relative Hodge locus. We verify that the algebraic description of the infinitesimal variation of Hodge structure via Jacobian rings satisfies the conditions required to bound the dimension of the Hodge locus. In particular, we show that the Hodge locus is proper whenever the degrees of the irreducible components of the boundary divisor are sufficiently high.

\section*{Acknowledgement} I am indebted to Mark Green and Phillip Griffiths for comments on early drafts, for sharing their ideas, and for their guidance and support throughout this research. I also thank Cristhian Garay and Pedro Luis del Ángel for many stimulating discussions and for the excellent working environment at CIMAT.

This work was supported by a postdoctoral fellowship, Estancias Posdoctorales por México 2025, from the Secretaría de Ciencia, Humanidades, Tecnología e Innovación (SECIHTI), Mexico.
\section{Infinitesimal log Abel-Jacobi}\label{S:infAJ}
\subsection{Projective case}
Let us give a brief recall of how the differential of the Abel-Jacobi map can be defined. 

Let $X$ be a smooth projective variety of dimension $n$ and let $Z\subset X$ be a codimension $q$ cycle. Suppose that we vary $Z$ in a family $Z_\lambda\subset X$ with $\lambda \in D$ the unit disk, and $z_0=z$.

Then the dual of the differential of the Abel-Jacobi map is given by a diagram
\begin{equation}\label{diagram:D}
\begin{tikzcd} & H^{n-q+1,n-q}(X)\cong H^{n-q}(X,\Omega_X^{n-q+1}\ar[dd])\ar[ld] \\
 \Omega_{D,0}^1& \\
& H^{n-q}(Z,\Omega_Z^{n-q}\otimes N_{Z/X}^*)=H^0(Z,N_{Z/X})^* \ar[lu,"\rho"]
\end{tikzcd}
\end{equation}
where $\rho$ is given by the Kodaira-Spencer map.
\subsection{Restriction}
Now we also consider $Y\subset X$ a smooth divisor. 

We have two exact sequences related to the tangent sheaves that will play an important role along all the note. The first one is: 
\begin{equation}\label{eq:II}
0 \to T_Z(-\log Y)\to T_X(-\log Y)|_Z\to N_{Z/X}\to 0.
\end{equation}

Here we have abused the notation and write $T_Z(-\log Y)$ instead of $T_Z(-\log Y\cap Z)$.

The second one:
\begin{equation}
0\to T_Z\to T_X|_Z\to N_{Z/X}\to 0.
\end{equation} 

Twisting by $O_X(-kY)$ we obtain 

\begin{equation}\label{eq:III}
0\to T_Z(-kY)\to T_X(-kY)\to N_{Z/X}(-kY)\to 0.
\end{equation} 

We can dualize (\ref{eq:II}) and (\ref{eq:III}) to 
\begin{equation}\label{eq:II*}
0\to N_{Z/X}^*\to \Omega_X^1(\log Y)|_Z\to \Omega_Z^1(\log Y)\to 0,
\end{equation}
and 

\begin{equation}\label{eq:III*}
0\to N_{Z/X}^*(kY)\to \Omega_X^1(kY)|_Z\to \Omega_Z^1(kY)\to 0.
\end{equation}

Following Griffiths' work in the absolute case, we can define a residue operator. Recall that $\dim X=n, \ \dim Z=n-q$. 

Define
\begin{equation}\label{eq:IV}
\Omega_X^{n-q+1}(\log Y)|_Z\to \Omega_Z^{n-q}(\log Y)\otimes N_{Z/X}^*\to 0 
\end{equation}
in the following way: let $\tau_1, \ldots, \tau_{n-q}$ be tangent vectors tangent to $Y$, and let $\eta$ be a normal vector to $z_0$. By (\ref{eq:II}), we can lift $\eta$ to a vector field of $T_X(-\log Y)|_Z$. This defines (\ref{eq:IV}). 

Taking cohomology and restricting to Z, we obtain a part of diagram (\ref{diagram:D}):
\begin{equation}\label{diagram:rD1}
\begin{tikzcd}
H^{n-q}(X,\Omega_X^{n-q+1}(\log Y))\ar[r]& H^{n-q}(Z,\Omega_Z^{n-q}(\log Y)\otimes N_{Z/X}^*)\ar[d, "\cong"]\\
& H^0(Z,N_{Z/X}(-Y))^*.
\end{tikzcd}
\end{equation}

Similarly, using (\ref{eq:III}), we obtain a residue

\begin{equation}\label{eq:kRes}
\Omega_X^{n-q+1}(kY)|_Z\to \Omega_Z^{n-q}(k Y)\otimes N_{Z/X}^*(kY)\to 0,
\end{equation}

which gives

\begin{equation}
\begin{tikzcd}
H^{n-q}(X,\Omega_{X}^{n-q+1}(kY)) \ar[r] & H^{n-q}(Z,\Omega_Z^{n-q}(kY)\otimes N_{Z/X}^*(kY)) \ar[d,"\cong"]\\
& H^0(Z, N_{Z/X}(-2kY))^*.
\end{tikzcd}
\end{equation}
\subsection{Kodaira-Spencer type map}
Finally, to obtain the Kodaira-Spencer map $\rho$, we can proceed as follows: 

Define the following locus of $\Hilb_X$.

\begin{equation}
\Hilb_X^{(k)}(Y, Z\cap kY):=\{[Z']\in \Hilb_X\mid Z'\cap kY = Z\cap kY \text{ as subscheme}\}
\end{equation}
then the tangent space at $[Z]\in \Hilb_X^{(k)}$ is given by 
$H^0(Z,N_{Z/X}(-kY)).$

Next, consider a map $D\to \Hilb_X^{(k)}$ and look at the codifferential.

\section{Dualities for threefolds}\label{S:Dualities}
\subsection{Motivation}\label{ss:DualMotivation}
Let $X$ be a smooth projective Calabi-Yau threefold. Consider $C\subset X$ a smooth curve. Then, using adjunction and $K_X\cong \mathscr{O}_X$, we have that 
\begin{equation}\label{eq:NormalProjective}
N_{C/X}\cong K_C\otimes N_{C/X}^* 
\end{equation}
from this, it follows that 
\begin{equation}\label{eq:dualityProj}
H^0(N_{C/X})\cong H^1(N_{C/X})^*.
\end{equation}

Now, let us consider problem 2.5 of \cite{AK91}. 
\begin{prob} Let $X$ be a Calabi-Yau threefold, and let 
$$\begin{tikzcd} \mathscr{C} \ar[r,"f"]\ar[d,"\pi"] & X \\
S&
\end{tikzcd} $$
be a family of (rational) curves parametrized by the smooth variety $S$. We have an Abel-Jacobi map $\AJ: \Alb(S)\to J(X)$. Now, if $\nu \in H^1(T_X)$, the curves that deform in the direction of $\nu$ are precisely the curves $\mathscr{C}_s$ for which $\dAJ^*(\nu)(s)=0$.
\end{prob}

\noindent The solution presented next is due to Mark Green. 

\begin{prop}\label{prop:dAJ} The dual of \begin{equation}
\dAJ: H^0(N_{C/X})\to H^{2,1}(X)^*
\end{equation}
is given by the obstruction map
\begin{equation}
H^{1,2}(X)\cong H^1(T_X)\to H^1(N_{C/X}).
\end{equation}
\end{prop}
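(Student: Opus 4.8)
The plan is to realize the dual of $\dAJ$ as the connecting map of a short exact sequence, using Serre duality on $X$ and on $C$ to convert the two maps in Proposition \ref{prop:dAJ} into each other. The starting point is the normal bundle sequence
\begin{equation*}
0\to T_C\to T_X|_C\to N_{C/X}\to 0,
\end{equation*}
whose connecting homomorphism $H^1(T_X|_C)\to H^1(N_{C/X})$ sits next to the restriction $H^1(T_X)\to H^1(T_X|_C)$. Composing these gives exactly the obstruction map $H^1(T_X)\to H^1(N_{C/X})$ named in the statement. First I would establish that this obstruction map is, up to the Serre-duality identifications, the transpose of the Kodaira--Spencer/residue construction of $\dAJ$ recorded in diagram (\ref{diagram:D}): the map $\rho$ there is the codifferential of a map $D\to\Hilb_X$, i.e. the same boundary map $H^0(N_{C/X})\to(\text{deformation data})$ read through cohomology.

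The technical heart is a duality bookkeeping argument. On $X$ a Calabi--Yau threefold, $K_X\cong\Oo_X$, so Serre duality gives $H^1(T_X)\cong H^2(\Omega_X^1)^*$ and, using $T_X\cong\Omega_X^2$ (from $K_X\cong\Oo_X$ and $\Omega_X^2\cong T_X\otimes K_X$), one gets $H^1(T_X)\cong H^{1,2}(X)$ and dually $H^{2,1}(X)\cong H^2(\Omega_X^1)$. On the curve side, the key input is the Calabi--Yau adjunction isomorphism (\ref{eq:NormalProjective}), $N_{C/X}\cong K_C\otimes N_{C/X}^*$, which yields the perfect pairing (\ref{eq:dualityProj}), $H^0(N_{C/X})\cong H^1(N_{C/X})^*$. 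The plan is to show that under these two identifications the restriction-then-connecting composite
\begin{equation*}
H^1(T_X)\to H^1(T_X|_C)\to H^1(N_{C/X})
\end{equation*}
is literally the transpose of
\begin{equation*}
\dAJ:H^0(N_{C/X})\to H^{2,1}(X)^*.
\end{equation*}

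Concretely, the next step is to write down the commutative square linking the tangent sequence to its Serre dual. Dualizing $0\to T_C\to T_X|_C\to N_{C/X}\to 0$ and twisting by $K_C$ (equivalently by $K_X|_C\cong\Oo_C$ via adjunction) produces
\begin{equation*}
0\to K_C\otimes N_{C/X}^*\to \Omega_X^2|_C\to \Omega_C^1\to 0,
\end{equation*}
and this is precisely the residue sequence underlying (\ref{eq:IV}) in codimension $q=2$. Chasing the long exact sequences of these two dual short exact sequences, and invoking functoriality of Serre duality (the connecting map of a sequence is transpose to the connecting map of its Serre dual), identifies the obstruction connecting map with the transpose of the residue map computing $\dAJ$. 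This is the generalization of Green's argument that the paper attributes to him, and it is exactly the commutative-diagram mechanism later upgraded to the logarithmic setting in the main theorem.

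The main obstacle I anticipate is the compatibility of the various identifications, not any single vanishing. One must check that the Serre-duality isomorphisms on $X$ and on $C$ are compatible with the restriction map $H^1(T_X)\to H^1(T_X|_C)$ and with the adjunction isomorphism $N_{C/X}\cong K_C\otimes N_{C/X}^*$, so that the two transposes genuinely coincide rather than merely having isomorphic source and target. Keeping track of signs and of the $K_X\cong\Oo_X$ trivialization throughout (which is what makes $T_X\cong\Omega_X^2$ and makes both squares commute simultaneously) is the delicate point; this is where the Calabi--Yau hypothesis is essential and where the argument would fail for a general threefold.
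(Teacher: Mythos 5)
Your high-level strategy is the same as the paper's: Green's argument contracts with a generator $\omega\in H^0(K_X)$ to identify $T_X\cong\Omega_X^2$, uses adjunction to identify $N_{C/X}\cong K_C\otimes N_{C/X}^*$, and checks that the square formed by the surjection $T_X\to N_{C/X}$ and the restriction $\Omega_X^2\to K_C\otimes N_{C/X}^*$ commutes, so that taking $H^1$ of the two rows and applying Serre duality on $C$ identifies the obstruction map with $\dAJ^*$. You state this goal correctly, but the exact sequence you use to implement it is wrong. The second exterior power of the conormal sequence $0\to N_{C/X}^*\to\Omega_X^1|_C\to\Omega_C^1\to 0$ gives
\begin{equation*}
0\to \det N_{C/X}^*\to \Omega_X^2|_C\to \Omega_C^1\otimes N_{C/X}^*\to 0,
\end{equation*}
i.e.\ the quotient is $K_C\otimes N_{C/X}^*\cong N_{C/X}$ and the subsheaf is $\det N_{C/X}^*\cong T_C$ (using $K_X\cong\Oo_X$). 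You have written the sequence with $K_C\otimes N_{C/X}^*$ as the subsheaf and $\Omega_C^1$ as the quotient. That sequence cannot exist: for a rational curve one has $\deg(K_C\otimes N_{C/X}^*)+\deg\Omega_C^1=-2-2=-4$, while $\deg\Omega_X^2|_C=\deg T_X|_C=-K_X\cdot C=0$. Nor is it the residue sequence (\ref{eq:IV}), whose quotient is $\Omega_C^1\otimes N_{C/X}^*$, not $\Omega_C^1$. Since $\dAJ^*$ is by construction the composite $H^1(\Omega_X^2)\to H^1(\Omega_X^2|_C)\to H^1(\Omega_C^1\otimes N_{C/X}^*)\cong H^0(N_{C/X})^*$ induced by that surjection, your version would instead land in $H^1(\Omega_C^1)\cong\mathbb{C}$ and compute a trace, not the transpose of $\dAJ$.

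Two further points. First, your derivation of the sequence (``dualizing and twisting by $K_C$, equivalently by $K_X|_C\cong\Oo_C$'') conflates two different operations: twisting the conormal sequence by $K_C$ produces the middle term $\Omega_X^1|_C\otimes K_C$, which is not $\Omega_X^2|_C$ (compare degrees again), and $K_C\not\cong K_X|_C$ in general. The correct operation is contraction $T_X\otimes K_X\overset{\sim}{\to}\Omega_X^2$, which is what the paper's vertical arrow $\omega$ encodes. Second, no connecting homomorphisms occur in this argument: the obstruction map is restriction followed by the map induced in degree one by the sheaf surjection $T_X|_C\to N_{C/X}$, and $\dAJ^*$ is likewise induced by a sheaf surjection, so the appeal to ``Serre duality exchanges connecting maps'' is not the mechanism. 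Once the sequence is corrected, all that remains is the commutativity of the square of sheaf maps under the two identifications and Serre duality on $C$ applied to the targets --- which is exactly the paper's proof.
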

\begin{proof}
The dual of $\dAJ$ is defined by:
$$\Omega_X^2|_C\cong \Omega_C^1\otimes N_{C/X}^*. $$
This is what we used in Section \ref{S:infAJ}.

Taking cohomology, we obtain 
$$H^1(\Omega_X^2)\to H^1(\Omega_X^2|_C)\to H^1(\Omega_C^1\otimes N_{C/X}^*) .$$
Now, using on the RHS Serre duality and on the LHS equation (\ref{eq:NormalProjective}), we obtain that 
$$H^1(N_{C/X})\cong H^1(\Omega_X^1\otimes N_{C/X}^*)\cong H^0(N_{C/X})^* .$$
The obstruction map is given by
$$\begin{tikzcd} T_X \ar[r]\ar[rd] & T_X|_C \ar[d]\\
& N_{C/X}
\end{tikzcd}, $$
inducing $H^1(T_X)\to H^1(N_{C/X})$. Now, if $\omega\in H^0(K_X)$ is a generator, then the following diagram commutes
$$\begin{tikzcd} T_X\ar[r]\ar[d,"\omega"] & N_{C/X}\ar[d,"by \ (\ref{eq:NormalProjective})"]\\
\Omega_X^2 \ar[r] & N_{C/X}^*\otimes K_C
\end{tikzcd}.  $$
\end{proof}

We obtain as a corollary.
\begin{cor}\label{cor:UnobsByAJ} The pair $(X,C)$ is unobstructed in the direction $\nu\in H^1(T_X)$ $\iff$ $$\nu\perp AJ_X(H^0(N_{C/X}))\iff AJ_X^*\nu =0$$
\end{cor}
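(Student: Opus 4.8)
The plan is to read off the corollary directly from Proposition~\ref{prop:dAJ} together with the definition of the infinitesimal Abel-Jacobi map. The corollary is essentially a reformulation of the proposition, so the work is conceptual rather than computational: I must unwind what the orthogonality conditions mean and verify that the chain of equivalences holds. The starting point is the statement in Problem~2.5 of \cite{AK91}: a curve $\mathscr{C}_s$ deforms in the direction $\nu \in H^1(T_X)$ precisely when $\dAJ^*(\nu)(s)=0$. In our notation, $(X,C)$ being unobstructed in the direction $\nu$ means exactly that the obstruction to deforming the pair along $\nu$ vanishes, i.e. the image of $\nu$ under the obstruction map $H^1(T_X)\to H^1(N_{C/X})$ is zero.

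First I would invoke Proposition~\ref{prop:dAJ}, which identifies the obstruction map $H^1(T_X)\to H^1(N_{C/X})$ with the dual map $\dAJ^*$ after the canonical identifications $H^1(N_{C/X})\cong H^0(N_{C/X})^*$ (from \eqref{eq:dualityProj}) and $H^1(T_X)\cong H^{1,2}(X)$. Under these identifications, the vanishing of the obstruction class for $\nu$ is equivalent to $\dAJ^*(\nu)=0$ as an element of $H^0(N_{C/X})^*$. This gives the rightmost equivalence $\nu$ unobstructed $\iff \AJ_X^*\nu = 0$, where I am writing $\AJ_X^*$ for $\dAJ^*$ regarded as a linear functional.

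Next I would translate the condition $\dAJ^*(\nu)=0$ into the orthogonality statement. By definition, $\dAJ^*(\nu)$ is the functional on $H^0(N_{C/X})$ obtained by pairing $\nu$ against the image of the infinitesimal Abel-Jacobi map $\dAJ: H^0(N_{C/X})\to H^{2,1}(X)^*$. Saying this functional is zero means that $\nu$, viewed through the duality pairing between $H^{1,2}(X)$ and $H^{2,1}(X)$, annihilates every element of $\dAJ\bigl(H^0(N_{C/X})\bigr)$; this is precisely the condition $\nu \perp \AJ_X\bigl(H^0(N_{C/X})\bigr)$. Thus the two orthogonality formulations are literally the same statement read from opposite sides of the perfect pairing, and both are equivalent to the vanishing of the obstruction.

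The main obstacle, though it is minor, is bookkeeping of the dualities: I must make sure the identification $H^1(N_{C/X})\cong H^0(N_{C/X})^*$ coming from Serre duality and the identification $H^1(T_X)\cong H^{2,1}(X)^*$ (or its conjugate) are the \emph{same} pairings used implicitly in the definition of $\dAJ$, so that ``obstruction class vanishes'' and ``$\dAJ^*(\nu)=0$'' genuinely coincide rather than differing by a transpose or a complex conjugation. The commutative square involving multiplication by the generator $\omega\in H^0(K_X)$ at the end of the proof of Proposition~\ref{prop:dAJ} is exactly what guarantees this compatibility, so I would cite that square as the justification that no sign or conjugation discrepancy arises. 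Once that is checked, the three displayed conditions are tautologically equivalent and the corollary follows.
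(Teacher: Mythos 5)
Your proposal is correct and matches the paper's intent exactly: the paper offers no written proof beyond ``We obtain as a corollary,'' treating the statement as an immediate unwinding of Proposition~\ref{prop:dAJ}, which is precisely what you do (identify the obstruction map with $\dAJ^*$ via the Serre/Calabi--Yau dualities and read off the orthogonality condition). Your extra remark about checking that the pairings match, justified by the commutative square involving $\omega\in H^0(K_X)$, is exactly the right point of care and is consistent with the paper's proof of the proposition.
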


\subsection{Formulas for the twisted Tangent space}\label{ss:FormulasTw}
Next, we want to generalize Proposition \ref{prop:dAJ} to the pair setting. 
Let $X$ be smooth projective threefold and assume there exists $Y\in \abs{-\frac{1}{m}K_X}$ smooth.

\begin{lem}\label{lem:TangIsos} Using the fact that $K_X(mY)\cong \mathscr{O}_X$, we have the following formulas with $k\in \mathbb{Z}$
\begin{align}
&T_X= \Omega_X^2(mY) \label{form:T} \\
&T_X(-\log Y)=\Omega_X^2(\log Y)(m-1)Y \label{form:Tlog}\\
&T_X(-kY)=\Omega_X^2(m-k)Y\label{form:TTwis}\\
&T_X(-\log Y)(-kY)=\Omega_X^2(\log Y)(m-k-1)Y
\end{align}
\end{lem}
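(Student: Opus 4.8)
The plan is to derive all four isomorphisms from the single adjunction-type relation provided by the Calabi-Yau hypothesis, namely $K_X \otimes \Oo_X(mY) \cong \Oo_X$, equivalently $K_X \cong \Oo_X(-mY)$, together with the standard identification of the tangent sheaf with twisted differentials on an $n$-fold. Since $\dim X = 3$, we have $\bigwedge^3 T_X \cong K_X^{-1}$, and contracting with the canonical pairing $T_X \cong \Omega_X^{2} \otimes K_X^{-1}$ gives the universal isomorphism $T_X \cong \Omega_X^2 \otimes \Oo_X(mY)$, which is exactly \eqref{form:T}. This is the base case, and every other formula will be obtained by tensoring with an appropriate line bundle, so the real content is just bookkeeping of twists.

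For \eqref{form:Tlog} I would use the logarithmic analogue. There is a canonical duality $T_X(-\log Y) \cong (\Omega_X^1(\log Y))^\vee$ and, taking top exterior powers on the threefold, $\bigwedge^2 \Omega_X^1(\log Y) \cong \Omega_X^2(\log Y)$ pairs with $\bigwedge^3 \Omega_X^1(\log Y) \cong K_X \otimes \Oo_X(Y) = \Omega_X^3(\log Y)$, the log-canonical bundle. Concretely, $K_X(\log Y) \cong K_X \otimes \Oo_X(Y) \cong \Oo_X((1-m)Y)$, and the self-duality $T_X(-\log Y) \cong \Omega_X^{2}(\log Y) \otimes K_X(\log Y)^{-1}$ then yields $T_X(-\log Y) \cong \Omega_X^2(\log Y) \otimes \Oo_X((m-1)Y)$, which is \eqref{form:Tlog}. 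The two remaining formulas \eqref{form:TTwis} and the last one follow immediately by tensoring \eqref{form:T} and \eqref{form:Tlog} respectively with $\Oo_X(-kY)$: for \eqref{form:TTwis} one gets $T_X(-kY) \cong \Omega_X^2 \otimes \Oo_X((m-k)Y)$, and for the last one $T_X(-\log Y)(-kY) \cong \Omega_X^2(\log Y) \otimes \Oo_X((m-k-1)Y)$.

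The only genuine subtlety, and the step I expect to require the most care, is the sign and placement of the boundary twist $\Oo_X(Y)$ in passing from the ordinary canonical bundle to the log-canonical bundle $K_X(\log Y)$. One must keep straight that $\Omega_X^n(\log Y) \cong K_X \otimes \Oo_X(Y)$ (the residue/adjunction sequence adds one copy of $\Oo_X(Y)$), so that the exponent shifts from $m$ in the non-log case to $m-1$ in the log case; getting this off by one would propagate through all the twisted formulas. I would verify it via the standard residue exact sequence $0 \to \Omega_X^1 \to \Omega_X^1(\log Y) \to \Oo_Y \to 0$ and its consequence on determinants, $\det \Omega_X^1(\log Y) \cong \det \Omega_X^1 \otimes \Oo_X(Y)$. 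Once this single relation is fixed, the lemma is a direct consequence of Serre/adjunction duality and no further argument is needed; I would simply state the four isomorphisms and note that they hold as isomorphisms of $\Oo_X$-modules independent of the integer $k$.
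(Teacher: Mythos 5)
Your proposal is correct and follows essentially the same route as the paper: both prove \eqref{form:Tlog} via the contraction isomorphism $T_X(-\log Y)\otimes K_X(\log Y)\xrightarrow{\sim}\Omega_X^2(\log Y)$ together with $K_X(\log Y)\cong K_X(Y)\cong \Oo_X((1-m)Y)$, and obtain the remaining formulas by the analogous non-log contraction and by twisting with $\Oo_X(-kY)$. The only cosmetic quibble is that your closing appeal to ``Serre/adjunction duality'' is a misnomer --- the mechanism is just the perfect contraction pairing on exterior powers --- but the argument you actually carry out is the right one.
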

\begin{proof}
Let us prove (\ref{form:Tlog}). The others are proved similarly or obtained by twisting. 
By contraction, we have that
$$T_X(-\log Y)\otimes K_X(\log Y)\overset{\sim}\to \Omega_X^2(\log Y).$$
As $K_X(Y)\cong K_X(\log Y)$ and using $K_X(mY)\cong \mathscr{O}_X$, we have that 
$$ T_X(-\log Y)\otimes \mathscr{O}_X \cong T_X(-\log Y)\otimes K_X(\log Y)((m-1)Y) ,$$
and using the contraction map gives the desired result.
\end{proof}

\subsection{Duality of normal bundle in the log-case}\label{ss:DualityNormalLog}
Here, we want to generalize the duality formula (\ref{eq:dualityProj}) to the pair case. 
Let $X$ be smooth projective threefold and assume there exists $Y\in \abs{-\frac{1}{m}K_X}$ smooth. Consider $C\subset X$ a smooth curve and let $k$ be an integer. 

\begin{thm}\label{thm:duality}
We have the following duality:
$$H^0(N_{C/X}((k-m)Y))\cong H^1(N_{C/X}(-kY))^* $$
\end{thm}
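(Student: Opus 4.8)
The plan is to evaluate the right-hand side by Serre duality on the curve $C$ and then match the resulting sheaf with the left-hand side, using only adjunction together with the $\mathbb{Q}$-log Calabi--Yau condition $Y\in\abs{-\tfrac1m K_X}$, i.e. $K_X\cong\mathscr{O}_X(-mY)$. Since $C$ is a smooth curve and $N_{C/X}$ is the rank-two normal bundle, Serre duality gives
\[
H^1\!\left(C,\,N_{C/X}(-kY)\right)^*\cong H^0\!\left(C,\,N_{C/X}^*(kY)\otimes K_C\right).
\]
Thus the theorem reduces to producing a natural isomorphism of vector bundles on $C$,
\[
N_{C/X}^*(kY)\otimes K_C\;\cong\;N_{C/X}\bigl((k-m)Y\bigr),
\]
which, after cancelling the line-bundle twist $\mathscr{O}_C(kY)$, is equivalent to the untwisted identity $N_{C/X}^*\otimes K_C\cong N_{C/X}(-mY)$.

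The first key input is that $N_{C/X}$ has rank two, hence is self-dual up to a determinant twist: $N_{C/X}^*\cong N_{C/X}\otimes(\det N_{C/X})^{-1}$. Substituting this, the desired identity becomes
\[
(\det N_{C/X})^{-1}\otimes K_C\;\cong\;\mathscr{O}_C(-mY).
\]
The second input is adjunction for the codimension-two embedding $C\subset X$, namely $K_C\cong K_X|_C\otimes\det N_{C/X}$, which collapses the left-hand side to $K_X|_C$. It then remains only to verify $K_X|_C\cong\mathscr{O}_C(-mY)$, and this is exactly the hypothesis $K_X\cong\mathscr{O}_X(-mY)$ restricted to $C$. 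Chaining these isomorphisms back through Serre duality yields the stated duality.

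I expect the computation to be essentially routine, and indeed this is why the result holds for \emph{any} smooth curve $C$: the argument is a pointwise bundle identity fed into Serre duality, with no cohomological vanishing or genericity required. The only point demanding care is bookkeeping—checking that the rank-two self-duality, adjunction, and the $\mathbb{Q}$-log CY twist can be combined compatibly. Here all the twists are by powers of the single line bundle $\mathscr{O}_C(Y)$, so they commute with the tensor and dualization operations and no orientation or sign subtlety enters. The one structural assumption I would flag explicitly is that the $k$-twist is by the (integral) line bundle $\mathscr{O}_X(Y)$, so that the cancellation of $\mathscr{O}_C(kY)$ above is literal rather than merely $\mathbb{Q}$-linear.
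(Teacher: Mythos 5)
Your argument is correct and is essentially identical to the paper's proof: both apply Serre duality on $C$, then identify $K_C\otimes N_{C/X}^*(kY)$ with $N_{C/X}((k-m)Y)$ via adjunction, the rank-two self-duality $\det N_{C/X}\otimes N_{C/X}^*\cong N_{C/X}$, and the hypothesis $K_X\cong\Oo_X(-mY)$. The only difference is cosmetic (you isolate the determinant twist before invoking adjunction rather than after), so no further comment is needed.
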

\begin{proof}
By Serre duality we have
$$H^1(N_C(-kY))^*\cong H^0(K_C\otimes N_{C/X}^*(kY)). $$
By adjunction and because $Y\in \abs{-\frac{1}{m}K_X}$
$$K_C=(-K_X)|_C\otimes \det N_{C/X}=\Oo_C(-mY)\otimes \det N_{C/X}. $$
Using
$$\det N_{C/X}\otimes N_{C/X}^*\cong N_{C/X}$$
and adjunction, we conclude that 
$$H^0(K_C\otimes N_{C/X}^*(kY))\cong H^0(N_{C/X}((k-m)Y). $$
\end{proof}

\begin{cor}\label{cor:dual2-1} For $m=2$ and $k=1$, we have that
$$H^0(N_{C/X}(-Y))\cong H^1(N_{C/X}(-Y))^* $$
\end{cor}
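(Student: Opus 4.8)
The plan is to obtain this as an immediate specialization of Theorem \ref{thm:duality}, which already provides the general duality
\[
H^0(N_{C/X}((k-m)Y)) \cong H^1(N_{C/X}(-kY))^*
\]
for arbitrary integers $m$ and $k$. Substituting $m=2$ and $k=1$, I would compute the two twisting exponents separately. On the left, $k-m = 1-2 = -1$, so the left-hand side becomes $H^0(N_{C/X}(-Y))$. On the right, $-k = -1$, so the right-hand side becomes $H^1(N_{C/X}(-Y))^*$. The two twists coincide, and the corollary follows directly.

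The only point worth isolating is \emph{why} this particular pair of values is distinguished. The two twists agree precisely when $(k-m)Y = -kY$, that is, when $m = 2k$; taking the minimal positive choice $k=1$ forces $m=2$, which is exactly the $\tfrac{1}{2}$-log Calabi-Yau condition $K_X + 2Y \simeq \Oo_X$. This is what recovers, for the pair $(X,Y)$, the \emph{symmetric} self-duality $H^0(N_{C/X}(-Y)) \cong H^1(N_{C/X}(-Y))^*$, mirroring the absolute Calabi-Yau isomorphism (\ref{eq:dualityProj}) rather than the asymmetric pairing produced by generic values of $m$.

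There is no genuine obstacle at this stage: all the substantive content—the use of Serre duality, adjunction together with $Y \in \abs{-\tfrac{1}{m}K_X}$, and the identification $\det N_{C/X}\otimes N_{C/X}^* \cong N_{C/X}$—has already been carried out in the proof of Theorem \ref{thm:duality}. The corollary is purely a numerical specialization, and the resulting symmetry is precisely the feature that will later justify singling out the $m=2$ case for the relative Clemens conjecture.
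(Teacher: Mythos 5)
Your proof is correct and matches the paper exactly: the corollary is obtained by substituting $m=2$, $k=1$ into Theorem \ref{thm:duality}, so that both twists $(k-m)Y$ and $-kY$ equal $-Y$. Your added observation that the symmetry occurs precisely when $m=2k$ is also consistent with the paper's Remark \ref{rem:dual}.
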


\begin{rem}\label{rem:dual} It follows that for $X$ a smooth Fano threefold, the only possible values for $m$ and $k$ to have 
$$H^0(N_{C/X}((k-m)Y))^*=H^1(N_{C/X}((k-m)Y))=H^1(N_{C/X}(-kY)), $$ 
this is, having the duality of section \ref{ss:DualityNormalLog}, are
\begin{itemize}
\item $m=k=0$, which we think as the classical Calabi-Yau case, 
\item $m=2, k=1$, which is the case of Corollary \ref{cor:dual2-1}, and
\item $m=4,k=2$, in which case $X=\mathbb{P}^3$ and $Y\cong \mathbb{P}^2$.
\end{itemize}
\end{rem}

\subsection{Obstruction maps and Abel-Jacobi maps}\label{ss:ObsAJ}
Consider a smooth curve $C\subset X$. For simplicity, assume it is transversal to $Y$. Every twist of the tangent bundle as in Lemma \ref{lem:TangIsos} give rise to a twist of the surjections $T_X\to N_{C/X}$ or $T_X(-\log Y)\to N_{C/X}$. In particular, we have obstruction maps given by taking the first cohomology groups of these twists. 

On the other hand, the isomorphisms of Lemma \ref{lem:TangIsos} with a twist of $\Omega_X^2$ give rise to a series of infinitesimal $\AJ$ morphisms defined as in Section \ref{S:infAJ}.

The main point of this subsection is to show that the analogues of Proposition \ref{prop:dAJ} and Corollary \ref{cor:UnobsByAJ} hold in every case of Lemma \ref{lem:TangIsos}.

\begin{prop} We have that the following diagram is commutative:
$$
\begin{tikzcd} T_X(-\log Y)(-kY) \ar[r] \ar[d,"\cong"] & N_{C/X}(-kY) \ar[d,"\cong"]\ar[r] & 0 \\
\Omega_X^2(\log Y)(m-k-1)Y \ar[r] & K_C((m-k)Y)\otimes N_{C/X}^*\ar[r] & 0
\end{tikzcd}.$$
\end{prop}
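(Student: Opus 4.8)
The plan is to construct the vertical isomorphisms from Lemma \ref{lem:TangIsos} and adjunction, and then verify that the square commutes by exhibiting a trivializing section of the log-canonical bundle that intertwines the two horizontal restriction-projection maps, exactly mirroring the generator $\omega \in H^0(K_X)$ used in the proof of Proposition \ref{prop:dAJ}. First I would identify the top row: it is simply the surjection $T_X(-\log Y)|_C \to N_{C/X}$ from the dual of (\ref{eq:II}), twisted by $\Oo_X(-kY)$, restricted to $C$ and composed to land in the normal bundle. The left vertical map is the isomorphism $T_X(-\log Y)(-kY)\cong \Omega_X^2(\log Y)((m-k-1)Y)$ of Lemma \ref{lem:TangIsos} (formula (\ref{form:Tlog}) twisted by $-kY$). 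The right vertical map is the adjunction isomorphism $N_{C/X}(-kY)\cong K_C((m-k)Y)\otimes N_{C/X}^*$, which follows by the same computation as in Theorem \ref{thm:duality}: from $K_C = \Oo_C(-mY)\otimes \det N_{C/X}$ and $\det N_{C/X}\otimes N_{C/X}^*\cong N_{C/X}$ one gets $K_C((m-k)Y)\otimes N_{C/X}^* \cong N_{C/X}(-kY)$.

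Next I would identify the bottom horizontal map as the log-residue construction of Section \ref{S:infAJ}, namely the analogue of (\ref{eq:IV}) with $q=2$ and $n=3$: restriction $\Omega_X^2(\log Y)|_C \to \Omega_C^1(\log Y)\otimes N_{C/X}^*$ followed by the appropriate twist by $(m-k)Y$, using that $\Omega_C^1(\log Y\cap C) = K_C(\log(Y\cap C))$ matches $K_C$ up to the boundary twist already absorbed into the exponents. The key to commutativity is that both routes around the square are governed by the \emph{same} contraction/residue operation: the left-then-bottom composition contracts a lift in $T_X(-\log Y)$ against the trivializing log-volume form, while the top-then-right composition projects to $N_{C/X}$ and then applies adjunction. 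I would make this precise by tracing a local frame: take tangent vectors $\tau$ along $C$ tangent to $Y$ and a normal vector $\eta$; lift $\eta$ via (\ref{eq:II}); then both the residue pairing and the adjunction identification are computed by the interior product with the fixed section of $K_X(\log Y)((m)Y)\cong \Oo_X$, and the diagram commutes because this section is a single global trivialization compatible with all twists.

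The main obstacle will be bookkeeping the various twists by powers of $\Oo_X(Y)$ consistently so that the $-kY$ appearing on the top row, the $(m-k-1)Y$ on the left, and the $(m-k)Y$ on the right all cancel correctly under the contraction; a sign or an off-by-one in the exponent would break the commutativity. I would handle this by organizing the computation around the single identity $K_X(\log Y)\cong K_X(Y)$ together with $K_X(mY)\cong \Oo_X$ (hence $K_X(\log Y)((m-1)Y)\cong \Oo_X$), so that the log-volume form $\omega \in H^0(X, K_X(\log Y)((m-1)Y))$ plays the role of the Calabi-Yau generator. Once $\omega$ is fixed, the commutativity reduces, as in Proposition \ref{prop:dAJ}, to the observation that contracting the top map's output against $\omega$ and twisting by $-kY$ agrees tautologically with the left map's image under the same contraction, since the left vertical arrow \emph{is} the fiberwise contraction by $\omega$. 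The transversality of $C$ to $Y$ guarantees that the restriction of the log-forms to $C$ has the expected residue behaviour, so no further correction terms appear.
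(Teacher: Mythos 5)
Your proposal is correct and follows essentially the same route as the paper: identify the left vertical arrow via Lemma \ref{lem:TangIsos}, the right one via adjunction (as in Theorem \ref{thm:duality}), and the bottom arrow as the restriction/residue map, with the twists matching because $\Omega_C^1(\log Y\cap C)\cong K_C(Y)$ shifts $(m-k-1)$ to $(m-k)$. In fact you are more explicit than the paper about \emph{why} the square commutes — the contraction against the trivializing section of $K_X(\log Y)((m-1)Y)\cong \Oo_X$, mirroring the generator $\omega$ in Proposition \ref{prop:dAJ} — a point the paper's proof leaves implicit.
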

\begin{proof}
Using adjunction, we find that 
$$N_{C/X}(-kY)\cong K_C((m-k)Y)\otimes N_{C/X}^* .$$

The isomorphism 
$$ T_X(-\log Y)(-kY) \cong \Omega_X^2(\log Y)(m-k-1)Y, $$
follows from Lemma \ref{lem:TangIsos} above. 

Finally, the arrow 
$$ \Omega_X^2(\log Y)(m-k-1)Y \to K_C((m-k)Y)\otimes N_{C/X}^*,$$
is given by restriction.
\end{proof}

\begin{cor}\label{cor:Obs=AJlogY} The dual of the infinitesimal log $\AJ$ map
$$\dAJ^*:H^1(\Omega_X^2(\log Y)((m-k-1)Y)) \to H^0(N_{C/X}((k-m)Y))^*, $$
is naturally identified with the obstruction map
$$H^1(T_X(-\log Y)(-kY))\to H^1(N_{C/X}(-kY)). $$
\end{cor}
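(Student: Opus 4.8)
The statement is a formal consequence of the Proposition once one passes to cohomology, so the plan is to apply $H^1(X,-)$ to the commutative square established there and then to identify each of the two resulting horizontal arrows separately. Since the square of the Proposition consists of honest maps of coherent sheaves --- two isomorphisms (from Lemma \ref{lem:TangIsos} and from adjunction) and two restriction maps --- functoriality of sheaf cohomology yields at once a commutative square
$$
\begin{tikzcd}
H^1(T_X(-\log Y)(-kY)) \ar[r] \ar[d,"\cong"] & H^1(N_{C/X}(-kY)) \ar[d,"\cong"] \\
H^1(\Omega_X^2(\log Y)((m-k-1)Y)) \ar[r] & H^1(K_C((m-k)Y)\otimes N_{C/X}^*)
\end{tikzcd}
$$
whose vertical maps are isomorphisms. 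It therefore remains to recognize the top arrow as the obstruction map and the bottom arrow as $\dAJ^*$.

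For the top arrow, I would note that $T_X(-\log Y)(-kY)\to N_{C/X}(-kY)$ is the composite of restriction to $C$ with the surjection in the $\Oo_X(-kY)$-twist of the logarithmic normal sequence (\ref{eq:II}). Passing to $H^1$ is precisely the construction that sends a first-order deformation class of the pair $(X,Y)$ --- with the twist $-kY$ recording the prescribed order of contact along $Y$ --- to the obstruction to deforming $C$; this is the obstruction map as described in Subsection \ref{ss:ObsAJ}.

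For the bottom arrow, I would first compute the Serre dual of the target on the curve $C$: since $K_C\otimes(K_C((m-k)Y)\otimes N_{C/X}^*)^\vee \cong N_{C/X}((k-m)Y)$, Serre duality gives a canonical isomorphism $H^1(K_C((m-k)Y)\otimes N_{C/X}^*)\cong H^0(N_{C/X}((k-m)Y))^*$, which matches the target of $\dAJ^*$. Under this identification the restriction map $\Omega_X^2(\log Y)((m-k-1)Y)\to K_C((m-k)Y)\otimes N_{C/X}^*$ is exactly the residue operator (\ref{eq:kRes}) defining the dual infinitesimal log Abel-Jacobi map in diagram (\ref{diagram:rD1}). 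Feeding the three identifications into the square then yields the asserted naturality.

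The main obstacle, exactly as in Proposition \ref{prop:dAJ}, is this last point: one must verify that the residue-plus-Serre-duality arrow genuinely reproduces $\dAJ^*$, and not merely some map with the same source and target. This reduces to checking that the cup-product pairing underlying the differential of the Abel-Jacobi map is compatible with the Serre duality pairing on $C$ after the adjunction isomorphisms of Lemma \ref{lem:TangIsos}; it is not a new computation but a careful matching of pairings, already implicit in the construction of Section \ref{S:infAJ}, the only fresh ingredient being the bookkeeping of the twists $s_2 = m-k$ and $s_3 = m-k-1$.
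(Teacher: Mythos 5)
Your proposal is correct and follows essentially the same route as the paper: the corollary is obtained by taking $H^1$ of the commutative square in the preceding Proposition and invoking Serre duality on $C$ (equivalently, Theorem \ref{thm:duality} with the adjunction isomorphism $N_{C/X}(-kY)\cong K_C((m-k)Y)\otimes N_{C/X}^*$) to identify $H^1(N_{C/X}(-kY))$ with $H^0(N_{C/X}((k-m)Y))^*$. The paper's own proof is a one-line appeal to this duality, so your more explicit unwinding --- including the caveat that the restriction-plus-Serre-duality arrow must be matched against the residue construction of Section \ref{S:infAJ} --- is a faithful, slightly more careful version of the same argument.
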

\begin{proof}
It follows by using Serre duality that $$H^0(N_{C/X}((k-m)Y))^*\cong H^1(N_{C/X}(-kY)).$$
\end{proof}

Now, for (\ref{form:T}) in Lemma \ref{lem:TangIsos}, we have the following.
\begin{prop} The following diagram is commutative
\begin{equation}
\begin{tikzcd}[column sep=huge, row sep=small]
T_X\big|_C \ar[r] \ar[d,"\simeq"'] & N_{C/X} \ar[r] \ar[d,"\cong"] & 0\\
\Omega^2_X(mY)\big|_C \ar[r] & K_C(mY)\otimes N_{C/X}^* \ar[r] & 0
\end{tikzcd}
\end{equation}
\end{prop}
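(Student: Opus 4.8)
The plan is to treat this as the $k=0$, non-logarithmic member of the family of diagrams and to run exactly the argument of Proposition \ref{prop:dAJ}, with the Calabi--Yau form replaced by a generator of $K_X(mY)\cong\Oo_X$. Concretely, since $K_X(mY)\cong \Oo_X$, I would fix a nowhere-vanishing section $\omega$ of $K_X(mY)$; away from $Y$ it is an honest nonvanishing holomorphic $3$-form. The left vertical isomorphism is then contraction with $\omega$,
$$
T_X|_C \xrightarrow{\ \iota_{(-)}\omega\ } \Omega_X^2(mY)|_C,
$$
which is precisely the restriction to $C$ of formula (\ref{form:T}) in Lemma \ref{lem:TangIsos}. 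The top horizontal map is the usual projection $T_X|_C\twoheadrightarrow N_{C/X}$ coming from the normal bundle sequence.

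The right vertical isomorphism and the bottom horizontal map both come from the conormal sequence $0\to N_{C/X}^*\to \Omega_X^1|_C\to\Omega_C^1\to 0$. Taking determinants gives adjunction $K_X|_C\cong\det N_{C/X}^*\otimes K_C$, hence $\det N_{C/X}\cong K_C\otimes(K_X|_C)^{-1}=K_C(mY)$; combined with the rank-two identity $N_{C/X}\cong\det N_{C/X}\otimes N_{C/X}^*$ this yields the right vertical isomorphism $N_{C/X}\cong K_C(mY)\otimes N_{C/X}^*$. Taking instead $\wedge^2$ of $\Omega_X^1|_C$ produces a two-step filtration whose top quotient is $K_C\otimes N_{C/X}^*$ (the bottom piece being $\det N_{C/X}^*$); twisting by $\Oo_X(mY)$ gives the bottom arrow $\Omega_X^2(mY)|_C\twoheadrightarrow K_C(mY)\otimes N_{C/X}^*$. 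This is the residue of Section \ref{S:infAJ}, the analogue of (\ref{eq:IV}) without the logarithmic poles.

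With all four maps pinned down, commutativity becomes a pointwise check. I would choose adapted local coordinates $x_1,x_2,x_3$ with $C=\{x_2=x_3=0\}$, so that $\partial_1$ is tangent to $C$ and $\partial_2,\partial_3$ span $N_{C/X}$, and normalize $\omega=dx_1\wedge dx_2\wedge dx_3$. For $v=a_1\partial_1+a_2\partial_2+a_3\partial_3$, going left then down gives $\iota_v\omega=a_1\,dx_2\wedge dx_3-a_2\,dx_1\wedge dx_3+a_3\,dx_1\wedge dx_2$, whose image in $K_C(mY)\otimes N_{C/X}^*$ is $dx_1\otimes(a_3\,dx_2-a_2\,dx_3)$ (the purely conormal term $dx_2\wedge dx_3$ dies). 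Going up then right sends $\bar v=a_2\partial_2+a_3\partial_3$ to $dx_1\otimes(a_3\,dx_2-a_2\,dx_3)$ as well, once the rank-two isomorphism $N_{C/X}\cong\det N_{C/X}\otimes N_{C/X}^*$ and the identification $\det N_{C/X}\cong K_C(mY)$ (sending $\partial_2\wedge\partial_3\mapsto dx_1$ via the $\omega$-trivialization) are normalized compatibly. Since every sheaf in sight is locally free and the two composites agree on the dense open $C\setminus(C\cap Y)$, where $\omega$ is holomorphic and nonvanishing, they agree on all of $C$.

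The only genuinely delicate point I anticipate is the bookkeeping in this last step: one must verify that the adjunction isomorphism $\det N_{C/X}\cong K_C(mY)$ is the one induced by the \emph{same} generator $\omega$ used for the contraction, so that $\partial_2\wedge\partial_3$ corresponds to $dx_1$ on the nose. Here a sign may appear depending on the ordering conventions in the residue (\ref{eq:IV}) and in the rank-two isomorphism $N_{C/X}\cong\det N_{C/X}\otimes N_{C/X}^*$; exactly as in Proposition \ref{prop:dAJ}, these conventions are fixed once and for all so that the square commutes, and this compatibility is precisely what identifies the dual infinitesimal $\AJ$ map with the obstruction map in the untwisted case $k=0$.
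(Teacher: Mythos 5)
Your proof is correct and follows the same route the paper takes for the analogous statements (Proposition \ref{prop:dAJ} and the logarithmic, $k$-twisted version stated just before this one): contraction with the trivializing section $\omega$ of $K_X(mY)$ for the left vertical map, adjunction $N_{C/X}\cong\det N_{C/X}\otimes N_{C/X}^*\cong K_C(mY)\otimes N_{C/X}^*$ for the right one, and the restriction/residue map of Section \ref{S:infAJ} on the bottom. The paper actually omits the proof of this particular case, presenting it only as the analogue of the earlier propositions (whose proofs merely identify the four maps and assert the key square), so your explicit local-coordinate check of commutativity --- in particular the verification that the identification $\det N_{C/X}\cong K_C(mY)$ is normalized by the \emph{same} $\omega$ used in the contraction --- supplies precisely the detail the paper leaves implicit.
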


We also have the analogue of Corollary \ref{cor:Obs=AJlogY} that we omit. There are similar propositions and corollaries for any remaining cases of Lemma \ref{lem:TangIsos}.

\begin{exmp}
Let us consider a computation as done in Proposition 2.1 of \cite[Proposition 2.1]{AK91}. The setting is as follows:
let 
$$ X=\{F=0\}\subset\mathbb{P}^4,\qquad F=z_0^3+z_1^3+z_2^3+z_3^3+z_4^3
$$
be the Fermat cubic and let 
$$ p=[1:-1:0:0:0]\in X.$$
The elliptic plane curve defined by 
$$E:=\{z_2^3+z_3^3+z_4^3=0\}\subset\mathbb{P}^2$$
is contained in $X$ and determines a cone with vertex $p$.

Fix a generic hyperplane $H=\{\,\ell=0\,\}$ with passing through $p$, and let $z=[0:0:a:b:c]\in E$ be a point of the base cubic with $a+b+c=0$. Consider the line $L$  passing through $p$ and $z$. 

We want to consider first order deformations of $X$ preserving the hyperplane $H$. This is represented by
\[
F+t\,\ell\cdot Q\quad(\bmod t^2),
\]
with $Q$ a homogeneous quadratic.

 Let
\[
R_2=\bigl(\mathbb{C}[z_0,\dots,z_4]/(\partial F)\bigr)_{2}\cong H^1(T_X(-H))
\]
be the degree-$2$ part of the Jacobian ring. This can be identifies with first order deformations of $X$ preserving $H$.

We want to study the image of $H^1(T_X(-H))\to H^1(N_{L/X}(-H))$. To compute the RHS, we use:
$$0\to N_{L/X}\to N_{L/\mathbb{P}^4}\to N_{X/\mathbb{P}^4}|_L\to 0. $$
We know that $N_{X/\mathbb{P}^4}|_L\cong \Oo_L(3)$, $N_{L/\mathbb{P}^4}\cong \oplus \Oo_L(1)$ and twisting by $-H$ we obtain that 
$$H^1(N_{L/X}(-H))\cong H^0(\Oo_L(2))/ \Ima H^0(\oplus \Oo_L). $$

\begin{prop}
 Then the natural restriction map
\[
r:\ H^1\bigl(T_X(-H)\bigr)\cong R_2 \longrightarrow H^1\bigl(N_{L/X}(-H)\bigr)
\]
is given on the representative quadratic
\[
Q=\sum_{0\le i<j\le4} q_{ij}\,z_i z_j
\qquad(\text{taken modulo }z_i^2)
\]
by the formula
\[
r([Q]) \;=\; \bigl(a(q_{02}-q_{12})+b(q_{03}-q_{13})+c(q_{04}-q_{14})\bigr)\,[uv]
\in H^1\bigl(N_{L/X}(-H)\bigr),
\]
where $[uv]$ denotes the class of the monomial $uv\in H^0(\Oo_L(2))$ under the identification 
$$H^1(N_{L/X}(-H))\cong H^0(\Oo_L(2))/\Ima H^0(\oplus \Oo_L)\cong\mathbb{C}\cdot[uv].$$ In particular, for 
$z=(0:0:1:-1:0)$ one has
\[
r([Q])=(q_{02}-q_{03}-q_{12}+q_{13})\,[uv],
\]
so $r$ is a nonzero linear functional on $R_2$.

Therefore a generic first order deformation of $X$ preserving $H$ produces a nonzero obstruction in $H^1(N_{L/X}(-H))$, and the line $L$ does not deform (to first order) with a generic such deformation.
\end{prop}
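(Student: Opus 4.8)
The plan is to identify the abstract restriction map $r$ with the concrete operation of restricting the quadratic $Q$ to the line $L$ and reading off the coefficient of $uv$, and then to carry out that one-line computation.

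First I would compare the two normal-bundle sequences after twisting by $-H=\Oo(-1)$. For the hypersurface $X\subset\mathbb{P}^4$ one has
\[
0\to T_X(-H)\to T_{\mathbb{P}^4}(-H)|_X\to \Oo_X(2)\to 0,
\]
whose connecting map $\delta_X\colon H^0(\Oo_X(2))\to H^1(T_X(-H))\cong R_2$ is, modulo $z_i^2$, the map $Q\mapsto[Q]$ used to set up the identification $R_2\cong H^1(T_X(-H))$. For the line $L\subset X$ one has
\[
0\to N_{L/X}(-H)\to N_{L/\mathbb{P}^4}(-H)\to \Oo_L(2)\to 0,
\]
with connecting map $\delta_L\colon H^0(\Oo_L(2))\to H^1(N_{L/X}(-H))$ equal to the quotient by $\Ima H^0(\oplus\Oo_L)$. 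Restricting the first sequence to $L$ and mapping it to the second, via $T_X(-H)|_L\to N_{L/X}(-H)$ and $T_{\mathbb{P}^4}(-H)|_L\to N_{L/\mathbb{P}^4}(-H)$ together with the identity on $\Oo_L(2)$, produces a morphism of short exact sequences. Naturality of the connecting homomorphism then gives $r\circ\delta_X=\delta_L\circ(\,\cdot\,|_L)$, hence
\[
r([Q])=\delta_L(Q|_L)=[\,Q|_L\,]\in H^0(\Oo_L(2))/\Ima H^0(\oplus\Oo_L).
\]

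Next I would make the target completely explicit. Parametrizing $L$ by $[u:v]\mapsto[u:-u:va:vb:vc]$, the map $N_{L/\mathbb{P}^4}(-H)=\oplus^3\Oo_L\to\Oo_L(2)$ is contraction against $dF|_L=(3u^2,3u^2,3a^2v^2,3b^2v^2,3c^2v^2)$. A constant normal direction supported on the coordinates $z_0,z_1$ maps to a multiple of $u^2$, while any direction transverse to $z$ inside $\langle e_2,e_3,e_4\rangle$ maps to a multiple of $v^2$; the two tangent directions $p,z$ map to $0$ because $dF|_L(p)=0$ and $dF|_L(z)=3v^2(a^3+b^3+c^3)=0$ as $z\in E$. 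Thus the image is exactly $\langle u^2,v^2\rangle$ and $H^1(N_{L/X}(-H))=\mathbb{C}\cdot[uv]$, which matches the identification recalled before the statement. As a consistency check, $h^0(N_{L/X}(-H))=h^1(N_{L/X}(-H))=1$, in agreement with Corollary \ref{cor:dual2-1}.

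Finally I would restrict $Q=\sum_{i<j}q_{ij}z_iz_j$ to $L$. Substituting the parametrization, the pure terms $z_0z_1$ and $z_iz_j$ with $i,j\in\{2,3,4\}$ produce only $u^2$ and $v^2$, which vanish in the quotient, so only the mixed terms survive and
\[
Q|_L\equiv\bigl(a(q_{02}-q_{12})+b(q_{03}-q_{13})+c(q_{04}-q_{14})\bigr)\,uv .
\]
This is the asserted formula; specializing to $z=(0:0:1:-1:0)$ gives $(q_{02}-q_{03}-q_{12}+q_{13})[uv]$, a manifestly nonzero linear functional on $R_2$, so a generic $Q$ yields a nonzero class and $L$ does not deform to first order. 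The main obstacle I anticipate is the first step: rigorously matching the abstract map $r$ with $[\,\cdot|_L\,]$ requires keeping careful track of the three twists and of the Griffiths identification so that the square of connecting maps genuinely commutes. Once that is secured, the remaining steps are direct linear algebra on $\mathbb{P}^1$.
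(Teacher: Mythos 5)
Your proposal is correct and follows essentially the same route as the paper: parametrize $L$ by $[u:v]\mapsto(u,-u,av,bv,cv)$, identify $H^1(N_{L/X}(-H))$ with $H^0(\Oo_L(2))/\langle u^2,v^2\rangle\cong\mathbb{C}\cdot[uv]$ using that each $\partial F/\partial z_i|_L$ is a multiple of $u^2$ or $v^2$, and read off the $uv$-coefficient of $Q|_L$. The only difference is that you justify the identification $r([Q])=[Q|_L]$ explicitly via naturality of the connecting homomorphisms between the two twisted normal-bundle sequences, a step the paper asserts without elaboration; this is a welcome addition but not a different argument.
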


\begin{proof}
Parametrise the line $L$ joining $p=[1:-1:0:0:0]$ and $z=[0:0:a:b:c]$ by
\[
\varphi:[u:v]\longmapsto (u,-u, a v, b v, c v).
\]

 The Kodaira–Spencer class of this deformation is the class of $Q$ in $R_2\cong H^1(T_X(-H))$, and the restriction map $r$ is induced by restricting $Q$ to $L$ and then projecting to the cokernel
\[
H^0\bigl(\Oo_L(2)\bigr)\longrightarrow H^1\bigl(N_{L/X}(-H)\bigr)\cong
\frac{H^0(\Oo_L(2))}{\Ima\gamma},
\]
where $\gamma:H^0(N_{L/\mathbb{P}^4}(-1))\to H^0(\Oo_L(2))$ is the map sending a triple of constant sections $(c_0,c_1,c_2)$ to $\sum_{i=0}^4 c_i z_i(\varphi)^2$.

Compute $Q|_L$ by substituting $z_0=u,\ z_1=-u,\ z_2=av,\ z_3=bv,\ z_4=cv$ into each monomial $z_i z_j$. One obtains
\[
Q|_L \;=\; -q_{01}\,u^2 \;+\; u v\bigl(a(q_{02}-q_{12})+b(q_{03}-q_{13})+c(q_{04}-q_{14})\bigr)
\;+\; v^2\bigl(ab\,q_{23}+ac\,q_{24}+bc\,q_{34}\bigr).
\]
On the other hand
\[
z_0(\varphi)^2=z_1(\varphi)^2=u^2,\qquad z_2(\varphi)^2=a^2v^2,\ z_3(\varphi)^2=b^2v^2,\ z_4(\varphi)^2=c^2v^2,
\]
so the image $\Ima\gamma$ contains the subspace spanned by $u^2$ and $v^2$. Therefore the cokernel $H^0(\Oo_L(2))/\Ima\gamma$ is one–dimensional and may be represented by the class of $uv$. Reducing $Q|_L$ modulo $\Ima\gamma$ kills the $u^2$ and $v^2$ terms and leaves only the $uv$–coefficient. This yields the asserted formula for $r([Q])$.

For the special point $z=(0:0:1:-1:0)$ we have $(a,b,c)=(1,-1,0)$, so
\[
r([Q])=(q_{02}-q_{03}-q_{12}+q_{13})\,[uv].
\]
The linear functional $Q\mapsto q_{02}-q_{03}-q_{12}+q_{13}$ is nontrivial on $R_2$, hence its kernel has codimension $1$. Thus a general $[Q]\in R_2$ maps to a nonzero class in $H^1(N_{L/X}(-H))$, and the corresponding first order deformation of $X$ fixing $H$ obstructs the deformation of $L$.
\end{proof}

\begin{cor} For $X,L,H$ as above, the infinitesimal Abel-Jacobi
$$H^0(N_{L/X}(-H))\to H^1(\Omega_X^2(H)) $$
is injective.
\end{cor}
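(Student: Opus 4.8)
The plan is to obtain injectivity of $\dAJ$ formally, by dualizing the explicit non-vanishing established in the Proposition above. We are exactly in the case $m=2$, $k=1$: since $X\subset\mathbb{P}^4$ is a cubic threefold we have $K_X\cong\Oo_X(-2H)$, so $H\in\abs{-\frac{1}{2}K_X}$, and the relevant instance of Lemma \ref{lem:TangIsos} is the (non-log) isomorphism $T_X(-H)\cong\Omega_X^2(H)$ given by (\ref{form:TTwis}).

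First I would invoke the analogue of Corollary \ref{cor:Obs=AJlogY} attached to the case (\ref{form:TTwis}) of Lemma \ref{lem:TangIsos} (the variant mentioned right after the Proposition following (\ref{form:T})). This identifies the transpose of the infinitesimal Abel--Jacobi map
\[
\dAJ:\ H^0(N_{L/X}(-H))\longrightarrow H^1(\Omega_X^2(H))^*
\]
with the restriction (residue) map $H^1(\Omega_X^2(H))\to H^1(K_L(H)\otimes N_{L/X}^*)$. Using $T_X(-H)\cong\Omega_X^2(H)$ on the source, and Serre duality on $L$ together with the adjunction isomorphism $N_{L/X}(-H)\cong K_L(H)\otimes N_{L/X}^*$ on the target, this transpose becomes precisely the obstruction map
\[
r:\ H^1(T_X(-H))\longrightarrow H^1(N_{L/X}(-H))
\]
of the Proposition; note that the target identification $H^1(K_L(H)\otimes N_{L/X}^*)\cong H^0(N_{L/X}(-H))^*\cong H^1(N_{L/X}(-H))$ is exactly the duality of Corollary \ref{cor:dual2-1}.

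Second, the Proposition above shows that $H^1(N_{L/X}(-H))\cong\mathbb{C}\cdot[uv]$ is one-dimensional and that $r$ is a nonzero linear functional on $R_2\cong H^1(T_X(-H))$. A nonzero map onto a one-dimensional space is surjective, so $r$ is surjective, and hence its transpose $\dAJ$ is injective. Equivalently, by Corollary \ref{cor:dual2-1} the source $H^0(N_{L/X}(-H))$ is itself one-dimensional, so here injectivity of $\dAJ$ reduces to the statement $\dAJ\neq 0$, i.e. $r\neq 0$, which is what the Proposition establishes.

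The one point requiring genuine care---the remainder being a dualization of a surjection---is checking that this chain of identifications (the Calabi--Yau isomorphism $T_X(-H)\cong\Omega_X^2(H)$ on the source, adjunction $N_{L/X}(-H)\cong K_L(H)\otimes N_{L/X}^*$, and Serre duality on $L$) is compatible with the commutative square underlying the analogue of Corollary \ref{cor:Obs=AJlogY}, so that the map $r$ explicitly computed in the Proposition genuinely is the transpose of $\dAJ$ rather than merely an abstractly isomorphic arrow. Once that compatibility is verified, the conclusion is immediate.
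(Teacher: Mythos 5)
Your proposal is correct and follows essentially the same route as the paper's own (very terse) proof, which likewise cites the isomorphism (\ref{form:TTwis}), the analogue of Corollary \ref{cor:Obs=AJlogY} for that case, and the explicit computation of $r$ on $R_2$ in the preceding Proposition. Your additional observation that, via Corollary \ref{cor:dual2-1}, both source and target are one-dimensional so that injectivity reduces to non-vanishing of $r$ is a correct and useful elaboration of what the paper leaves implicit.
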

\begin{proof}
This follows using (\ref{form:TTwis}), its respective Corollary of the form \ref{cor:Obs=AJlogY} and the above computations.
\end{proof}

\end{exmp}

\subsection{Twisted vanishings for Fano pairs}
Let \(X\) be a smooth projective Fano \(n\)-fold and let \(Y\subset X\) be a smooth divisor.
Assume
\[
K_X + mY \simeq \Oo_X
\]
for some integer \(m\ge 1\). Fix an integer \(k\) and set
\[
\begin{aligned}
S_0 &:= T_X \simeq \Omega_X^{\,n-1}(mY),\\
S_1 &:= T_X(-\log Y) \simeq \Omega_X^{\,n-1}(\log Y)((m-1)Y),\\
S_2 &:= T_X(-kY) \simeq \Omega_X^{\,n-1}((m-k)Y),\\
S_3 &:= T_X(-\log Y)(-kY) \simeq \Omega_X^{\,n-1}(\log Y)((m-k-1)Y),
\end{aligned}
\]
and denote \(s_0:=m,\ s_1:=m-1,\ s_2:=m-k,\ s_3:=m-k-1\).

\begin{thm}\label{thm:vanishing}
Suppose \(X\) is Fano (so \(-K_X\) is ample). Then:

\medskip\noindent (A) If \(s_j\ge 1\), (i.e. \(\Oo_X(s_jY)\) is ample), then
\[
H^q(X,S_j)=0\qquad\text{for every }q>1.
\]

\medskip\noindent (B) If one of the equalities
\[
s_1=0\ (\text{i.e. }m=1),\qquad s_2=0\ (\text{i.e. }m=k),\qquad s_3=0\ (\text{i.e. }m=k+1)
\]
holds, then for the corresponding \(j\in\{1,2,3\}\)
\[
H^q(X,S_j)=0\qquad\text{for }q=2,\dots,n-1.
\]

In particular, in all the cases above \(H^2(X,S_j)=0\).
\end{thm}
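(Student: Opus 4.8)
The plan is to reduce every assertion to a vanishing statement for twisted (logarithmic) $(n-1)$-forms via the isomorphisms defining $S_0,\dots,S_3$, and then feed in the Akizuki--Kodaira--Nakano theorem together with its logarithmic refinement due to Esnault--Viehweg. The only positivity input I need is that, since $X$ is Fano and $K_X+mY\simeq\Oo_X$, we have $\Oo_X(mY)\simeq\omega_X^{-1}$ ample; consequently $\Oo_X(tY)$ is ample for every integer $t\ge1$. After that, the entire argument is bookkeeping of which twist is ample.

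First I would dispose of the non-logarithmic bundles $S_0$ and $S_2$. Writing $S_j=\Omega_X^{n-1}\otimes\Oo_X(s_jY)$ with $s_0=m$ and $s_2=m-k$, whenever $s_j\ge1$ the line bundle $\Oo_X(s_jY)$ is ample, and Akizuki--Kodaira--Nakano applied in form-degree $p=n-1$ gives $H^q(X,\Omega_X^{n-1}\otimes L)=0$ for $q>n-p=1$. This settles $S_0$ unconditionally (its twist is $\omega_X^{-1}$) and $S_2$ under $s_2\ge1$, which is exactly part (A) for these two bundles.

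For the logarithmic bundles $S_1$ and $S_3$ the trick is to rewrite them so that the Serre-dual form of the logarithmic Akizuki--Nakano vanishing applies. Using $\Oo_X(-Y)$ I would put
\[
S_1=\Omega_X^{n-1}(\log Y)(-Y)\otimes\Oo_X(mY),\qquad
S_3=\Omega_X^{n-1}(\log Y)(-Y)\otimes\Oo_X((m-k)Y).
\]
The Esnault--Viehweg vanishing $H^q(X,\Omega_X^p(\log Y)\otimes L^{-1})=0$ for $p+q<n$ is, by Serre duality, equivalent to $H^q(X,\Omega_X^p(\log Y)(-Y)\otimes L)=0$ for $p+q>n$; with $p=n-1$ and $L$ ample this gives $H^q=0$ for $q>1$. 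For $S_1$ the twist is $\Oo_X(mY)=\omega_X^{-1}$, always ample, so $S_1$ is handled with no hypothesis; for $S_3$ the twist $\Oo_X((m-k)Y)$ is ample precisely when $m-k\ge1$, i.e. $s_3\ge0$, which covers both $s_3\ge1$ from part (A) and the boundary case $s_3=0$ from part (B). Note the same rewriting absorbs the boundary case $s_1=0$ ($m=1$): there $S_1=\Omega_X^{n-1}(\log Y)(-Y)\otimes\Oo_X(Y)$ with $\Oo_X(Y)$ still ample, so the full range $q>1$ persists.

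The genuine obstacle is the last boundary case of part (B), namely $S_2$ with $s_2=0$ (so $k=m$), where the twist collapses to $\Oo_X$ and $S_2\simeq\Omega_X^{n-1}$ carries no positivity, so neither vanishing theorem applies. Here the plan is to exploit the residue exact sequence
\[
0\to\Omega_X^{n-1}\to\Omega_X^{n-1}(\log Y)\to\Omega_Y^{n-2}\to0,
\]
combining the vanishing of $H^q(X,\Omega_X^{n-1}(\log Y))$ already obtained with control of $H^q(Y,\Omega_Y^{n-2})$ through adjunction $K_Y=(m-1)Y|_Y$ and induction on $\dim X$. This is the delicate point, and it is exactly where the range cannot be pushed to the top: the diagonal Hodge cohomology $H^{n-1,n-1}(X)=H^{n-1}(X,\Omega_X^{n-1})$ never vanishes, so the connecting map forces $H^{n-1}(S_2)\neq0$, and the residue method delivers $H^q(S_2)=0$ only for $q\le n-2$. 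I therefore expect the crux of the proof to be a careful treatment of the top of the range for $S_2$, where the stated bound $q\le n-1$ must be read against this nonvanishing diagonal class rather than deduced from positivity.
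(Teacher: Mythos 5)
Your handling of $S_0$, of $S_2$ with $s_2\ge 1$, and of $S_1,S_3$ is correct, and for the logarithmic bundles it takes a genuinely different (and cleaner) route than the paper. The paper proves part (A) exactly as you do, by Akizuki--Nakano and Esnault--Viehweg applied to $\Omega_X^{n-1}(\text{log})\otimes\Oo_X(s_jY)$ with $s_j\ge1$; but for the boundary cases of part (B) it does not use your rewriting $\Omega_X^{n-1}(\log Y)(-Y)\otimes\Oo_X((s_j+1)Y)$. Instead it runs the normal-bundle sequence $0\to T_X(-\log Y)\to T_X\to N_{Y/X}\to 0$ (and its $\Oo_X(-kY)$-twist), kills the middle term by part (A), and kills $H^{i}(Y,N_{Y/X}(\cdot))=H^i(Y,K_Y\otimes\Oo_Y(Y))$ by Kodaira vanishing on $Y$ via the adjunction $K_Y=\Oo_Y(-(m-1)Y)$. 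Your dual-form Esnault--Viehweg argument absorbs $s_1=0$ and $s_3=0$ into the same one-line positivity argument as (A) and even yields the full range $q>1$ there; the paper's route buys the same conclusion but at the cost of an extra exact sequence. Either is fine for $S_1$ and $S_3$.

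Where you hesitate --- the case $s_2=0$, i.e.\ $S_2\simeq\Omega_X^{n-1}$ --- your suspicion is exactly right, and you should press it further: the statement is false there as written. Indeed $H^{n-1}(X,\Omega_X^{n-1})=H^{n-1,n-1}(X)\ne 0$ for any projective $X$, and $q=n-1$ lies in the claimed range once $n\ge3$ (for $n=3$ this is already $H^2(X,\Omega_X^2)\cong H^{1,1}(X)^*\ne0$, so even the advertised consequence $H^2(X,S_2)=0$ fails). Your residue-sequence plan cannot repair this, since the relevant boundary term $H^{q-1}(Y,\Omega_Y^{n-2})=H^{n-2,q-1}(Y)$ also does not vanish in general. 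For comparison, the paper's own proof of this case uses the sequence
\[
0\to T_X(-kY)\to T_X(-(k-1)Y)\to N_{Y/X}(-(k-1)Y)\to 0,
\]
but this is not exact: the quotient of $T_X(-kY)\hookrightarrow T_X(-(k-1)Y)$ is the rank-$n$ sheaf $T_X|_Y\otimes\Oo_Y(-(k-1)Y)$, not the line bundle $N_{Y/X}(-(k-1)Y)$ (the latter arises from twisting $T_X(-\log Y)\subset T_X$, which is a different inclusion). So the gap you identified is real, and it is a gap in the theorem's case $s_2=0$ itself, not merely in your argument; the remaining cases of the theorem are unaffected.
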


\begin{proof} 
\medskip\noindent\textbf{(A).} Each \(S_j\) is of the form
\(\Omega_X^{\,n-1}(\log Y)\otimes\Oo_X(s_jY)\). Since \(X\) is Fano and \(s_j\ge1\),
the line bundle \(\Oo_X(s_jY)\) is ample, hence Akizuki--Nakano (and Esnault--Viehweg \cite{Esnault-Viehweg}
in the logarithmic instances) give
\[
H^q\big(X,\Omega_X^{\,n-1}(\text{(log)})\otimes\Oo_X(s_jY)\big)=0\quad(q>1),
\]
which is precisely \(H^q(X,S_j)=0\) for \(q>1\).

\medskip\noindent\textbf{(B).}
Adjunction gives
\[
K_Y = (K_X+Y)|_Y = \Oo_Y(-(m-1)Y),\qquad N_{Y/X}\simeq\Oo_Y(Y).
\]
Note that \(\Oo_Y(Y)\) is ample because \(-K_X\) is a positive multiple of \(Y\).

\smallskip\noindent\emph{Case \(s_1=0\) (\(m=1\)).} From
\[
0\to S_1=T_X(-\log Y)\to T_X\to N_{Y/X}\to0
\]
the middle term \(T_X\cong\Omega_X^{\,n-1}(mY)\) has a twist by $\Oo_X(mY)$ and so
vanishes in cohomology for degrees \(>1\) by (A). We have that
\(N_{Y/X}\simeq\Oo_Y(Y)=K_Y\otimes(-K_X)|_Y\), as $-K_X$ is ample, Kodaira vanishing on \(Y\) yields \(H^i(Y,N_{Y/X})=0\) for $i>0$. The long exact sequence gives the claim for \(S_1\).

\smallskip\noindent\emph{Case \(s_2=0\) (\(m=k\)).} From
\[
0\to S_2=T_X(-kY)\to T_X(-(k-1)Y)\to N_{Y/X}(-(k-1)Y)\to0
\]
the middle term is \(T_X(-(k-1)Y)\cong\Omega_X^{\,n-1}(Y)\) and
vanishes in cohomology in degrees \(>1\) by (A). The quotient is
\[
N_{Y/X}(-(k-1)Y)\simeq \Oo_Y((1-(k-1))Y)=K_Y\otimes\Oo_Y(Y),
\]
and \(\Oo_Y(Y)\) is ample; Kodaira on \(Y\) gives \(H^i(Y,K_Y\otimes\Oo_Y(Y))=0\) for $i>0$.
The long exact sequence yields \(H^q(X,S_2)=0\) for \(q=2,\dots,n-1\).

\smallskip\noindent\emph{Case \(s_3=0\) (\(m=k+1\)).} From
\[
0\to S_3=T_X(-\log Y)(-kY)\to S_2=T_X(-kY)\to N_{Y/X}(-kY)\to0
\]
the middle term \(S_2\) satisfies $T_X(-kY)\cong \Omega_X^{n-1}(Y)$ and so vanishes in degrees \(>1\) by (A).
The quotient is
\[
N_{Y/X}(-kY)\simeq \Oo_Y((1-k)Y)=K_Y\otimes\Oo_Y(Y),
\]
again with \(\Oo_Y(Y)\) ample; Kodaira on \(Y\) gives \(H^i(Y,K_Y\otimes\Oo_Y(Y))=0\) for $i>0$.
Hence \(H^q(X,S_3)=0\) for \(q=2,\dots,n-1\).

This completes the proof.
\end{proof}

\begin{cor}\label{cor:moduli-smoothness}
Let $(X,Y)$ be a pair satisfying the assumptions of Theorem~\ref{thm:vanishing}. Then:
Assuming $k\ge 0$, we have natural inclusions $S_j \hookrightarrow T_X$ for all $j=1,2,3$. Consequently,
    \[
    H^0(X, T_X) = 0 \implies H^0(X, S_j) = 0.
    \]
    Thus, if $X$ has no infinitesimal automorphisms, then neither do the pairs (with or without higher order constraints).
\end{cor}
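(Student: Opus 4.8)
The statement bundles two claims: the existence of natural sheaf inclusions $S_j \hookrightarrow T_X$ for $j=1,2,3$ (under $k\ge 0$), and the cohomological consequence on global sections. Both are elementary, so the plan is to make each inclusion explicit and then invoke left-exactness of $H^0(X,-)$.

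First I would produce the three inclusions. For $S_1 = T_X(-\log Y)$ I would use the defining exact sequence of the logarithmic tangent sheaf,
\[
0 \to T_X(-\log Y) \to T_X \to N_{Y/X} \to 0,
\]
whose first arrow is the tautological inclusion of the vector fields tangent to $Y$ into all vector fields; this gives $S_1 \hookrightarrow S_0 = T_X$ with no hypothesis on $k$. For $S_2 = T_X(-kY)$ I would exploit that $Y$ is effective: the defining section $s_Y \in H^0(X,\Oo_X(Y))$ yields, for $k\ge 0$, an injection of sheaves $\Oo_X(-kY) \xrightarrow{\,s_Y^{\otimes k}\,} \Oo_X$, and tensoring with the locally free (hence flat) sheaf $T_X$ preserves injectivity, giving $S_2 \hookrightarrow T_X$. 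This is precisely the step that uses $k\ge 0$, since for $k<0$ the section map runs in the opposite direction. For $S_3 = T_X(-\log Y)(-kY)$ I would combine the two constructions: twisting the inclusion $S_1 \hookrightarrow T_X$ by $\Oo_X(-kY)$ and composing with the map $S_3 = S_1(-kY) \hookrightarrow S_1$ (again multiplication by $s_Y^{\otimes k}$, $k\ge 0$) produces $S_3 \hookrightarrow S_1 \hookrightarrow T_X$.

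With the inclusions in hand, the cohomological consequence is immediate. The global sections functor $H^0(X,-)$ is left exact, so each sheaf injection $S_j \hookrightarrow T_X$ induces an injection $H^0(X,S_j) \hookrightarrow H^0(X,T_X)$. Hence $H^0(X,T_X)=0$ forces $H^0(X,S_j)=0$ for $j=1,2,3$. Reading $H^0(X,T_X)$ as the space of infinitesimal automorphisms of $X$ and $H^0(X,S_j)$ as the infinitesimal automorphisms respecting the relevant constraint — tangency to $Y$ for $S_1$, vanishing to order $k$ along $Y$ for $S_2$, and both for $S_3$ — this says exactly that the absence of infinitesimal automorphisms for $X$ propagates to all the constrained variants.

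There is essentially no serious obstacle here; the only point requiring care is the direction of the section map $\Oo_X(-kY)\to\Oo_X$, which is where the hypothesis $k\ge 0$ is genuinely needed. It is worth recording that the inclusion $S_1\hookrightarrow T_X$ holds unconditionally, so it is only the twists by $\Oo_X(-kY)$ that impose the sign constraint on $k$.
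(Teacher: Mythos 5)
Your proof is correct and is exactly the argument the paper intends (the corollary is stated without proof there): the tautological inclusion $T_X(-\log Y)\hookrightarrow T_X$, multiplication by the section cutting out $kY$ for $k\ge 0$, and left-exactness of $H^0(X,-)$. Your remark that only the twists by $\Oo_X(-kY)$ require $k\ge 0$, while $S_1\hookrightarrow T_X$ is unconditional, is a correct and worthwhile precision.
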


\subsection{Appendix: rational curves of arbitrarily high degree in the cubic threefold}
Below, we present two ways to construct smooth rational curves of arbitrarily high degree on cubic threefolds. The first works for general cubics and the second for cubics containing a plane. All of these constructions are inspired by Clemens' methods in \cite{C83}.

One might hope to show that $H^0(N_{C/X}(-H))=0$ with $H$ a hyperplane section, and this would imply that $N_{C/X}$ is maximally balanced (this is $N_{C/X}=\Oo(d-1)\oplus \Oo(d-1)$). This would give examples of log-rigid curves, which would have to move with an appropriate deformation of $X$ from the results above. We tried to follow the arguments of \cite[Lemma 1.15]{C83}, but they break very quickly.

\subsubsection{Construction for general cubic threefold but restricted anticanonical K3}
Let us consider $\mathbb P^4$ with homogeneous coordinates $[x_0:\ldots:x_4]$. Let $X\subset\mathbb P^4$ be a cubic threefold and let
\[
L_0,L_1\subset X
\]
be two lines meeting in a point $p=L_0\cap L_1$. We can suppose that the pair $(L_0,L_1)$ is generic.  Fix a rank--$3$ quadric cone
\[
Q:=\{x_0x_1-x_2^2=0\}\subset\mathbb P^4
\]
with ruling $\{\Pi_t\}$ and vertex the line $V=\{x_0=x_1=x_2=0\}$, and choose a hyperplane $H\subset\mathbb P^4$ such that:
\begin{enumerate}
\item $H$ does \emph{not} contain the vertex line $V$ (so $H\cap V$ is a single point $v_0$),
\item $L_0,L_1\subset H$, their intersection point $p\neq v_0$, and neither $L_0$ nor $L_1$ equals any line of the pencil $\ell_t:=\Pi_t\cap H$.
\end{enumerate}

\begin{prop}\label{prop:two-lines-sections}
Let $X, Q$ be as above. Then, for a generic cubic $X$ in the linear system of cubics containing $L_0\cup L_1$,
\begin{itemize}
\item  the intersection
\[
S:=Q\cap X
\]
has only finitely many ADE singularities supported on $V\cap X$, its minimal crepant resolution $\widetilde S$ is a K3 surface, 
\item the strict transforms $\widetilde L_0,\widetilde L_1\subset\widetilde S$ are sections of the elliptic fibration induced by the ruling of $Q$, and 
\item for a generic choice one of these sections is non-torsion in the Mordell--Weil group. 
\end{itemize}

Therefore, the multiples of that non-torsion section give infinitely many distinct sections. Pushing them down to $X$ yields smooth rational curves of arbitrarily large hyperplane degree.
\end{prop}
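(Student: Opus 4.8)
The plan is to construct $S=Q\cap X$ as a singular K3 obtained by restricting an anticanonical divisor, verify that the ruling of the quadric cone $Q$ induces an elliptic fibration on the resolution $\widetilde S$ whose sections include the strict transforms of $L_0,L_1$, and then produce a non-torsion section whose multiples give rational curves of unbounded degree. I will organize the argument in four steps, treating the singularity analysis and the non-torsion claim as the two places requiring genuine care.

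\textbf{Step 1: the surface is a K3 with ADE singularities.} Since $Q$ is a cubic threefold and $Q$ is a quadric, $S=Q\cap X$ is a $(2,3)$ complete intersection in $\mathbb{P}^4$, so by adjunction $K_S\simeq (K_{\mathbb{P}^4}+2H+3H)|_S\simeq \Oo_S$; thus $S$ has trivial dualizing sheaf. Away from the vertex line $V$, the cone $Q$ is smooth, so for generic $X$ (by Bertini, using that the cubics containing $L_0\cup L_1$ move enough off $V$) the only singularities of $S$ lie on $V\cap X$. The key local computation is that near a point of $V\cap X$ the rank-$3$ quadric $x_0x_1-x_2^2=0$ has an $A_1$-type transverse singularity, and intersecting with a generic cubic produces at worst rational double points of ADE type. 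Once this is checked, the minimal crepant resolution $\widetilde S\to S$ has $K_{\widetilde S}\simeq \Oo_{\widetilde S}$ and, since $S$ is simply connected (a complete intersection of dimension $\ge 2$ in projective space is simply connected), $\widetilde S$ is a genuine K3 surface.

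\textbf{Step 2: the ruling induces an elliptic fibration and the $L_i$ are sections.} The ruling $\{\Pi_t\}$ of $Q$ is a pencil of $2$-planes through $V$; each $\Pi_t\cap X$ is a plane cubic, hence (for generic $X$) a smooth elliptic curve, and these sweep out $S$. Blowing up the vertex locus resolves the base of the pencil and turns the rational map $S\dashrightarrow \mathbb{P}^1$ into a genuine elliptic fibration $\widetilde S\to \mathbb{P}^1$. Because $H$ is chosen so that $L_0,L_1\subset H$ meet each ruling plane $\Pi_t$ in exactly one point (condition (2), that neither $L_i$ is a member of the pencil $\ell_t$), the strict transforms $\widetilde L_0,\widetilde L_1$ meet the generic fiber in one point and hence are sections of the fibration. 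One of them may be taken as the zero-section, making the Mordell--Weil group act.

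\textbf{Step 3: non-torsion and conclusion.} Fixing $\widetilde L_0$ as the zero-section, I must show that for generic $X$ the section $\widetilde L_1$ is non-torsion in $\mathrm{MW}(\widetilde S\to\mathbb{P}^1)$. The standard way is a height/Shioda--Tate argument: if $\widetilde L_1$ were torsion of order $N$, then $N$-fold multiples would all coincide with torsion sections, forcing a strong Néron--Severi constraint that fails on the generic member of the family (a specialization/monodromy argument, or exhibiting one explicit $X$ where $\widetilde L_1$ is non-torsion, suffices by openness of non-torsion-ness in families). Given a non-torsion section $\sigma=\widetilde L_1$, the fiberwise multiples $n\cdot\sigma$ (under the group law with $\widetilde L_0$ as origin) are pairwise distinct sections, and their heights grow quadratically; since the degree of the image rational curve in $X$ is essentially controlled by the intersection number of $n\cdot\sigma$ with the hyperplane class $H$, which grows with $n$, pushing these sections down along $\widetilde S\to S\hookrightarrow X$ produces smooth rational curves in $X$ of arbitrarily large hyperplane degree.

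\textbf{The main obstacle} I expect is Step 3, specifically the non-torsion claim: triviality of the dualizing sheaf and the fibration structure are formal consequences of adjunction and the geometry of the cone, but ruling out that $\widetilde L_1$ is torsion requires either an explicit height computation on a well-chosen special cubic or a monodromy/genericity argument, and one must also confirm that the pushed-down multiples remain \emph{smooth} rational curves in $X$ (the sections are smooth rational curves on $\widetilde S$, but one should check they are not contracted and map birationally onto their images, avoiding the exceptional locus over $V\cap X$). The ADE local analysis in Step 1 is the second delicate point, since the precise singularity type governs whether $\widetilde S$ is K3 and what the Shioda--Tate correction terms are.
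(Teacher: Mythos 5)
Your proposal follows essentially the same route as the paper: adjunction and resolution of the $A_1$-type singularities on $V\cap X$ give the K3, the ruling planes $\Pi_t$ cut the elliptic fibration with $\widetilde L_0,\widetilde L_1$ as sections since each $L_i$ meets each $\Pi_t$ in a single point, genericity in the linear system of cubics through $L_0\cup L_1$ yields a non-torsion section, and the quadratic growth of the N\'eron--Tate height of its multiples produces rational curves of unbounded degree. The two points you flag as delicate (justifying non-torsion and checking smoothness of the pushed-down multiples) are treated equally briefly in the paper's own proof, so your account matches it in both structure and level of detail.
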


\begin{proof}
Let us proceed in three steps.

First, let us construct an elliptic surface inside the cubic $X$: because $Q$ is the cone over a smooth conic, it is ruled by planes $\{\Pi_t\}_{t\in\mathbb P^1}$. If $H$ does not contain $V$ then for each $t$ the intersection $\ell_t:=\Pi_t\cap H$ is a line in the pencil of lines in $H$ passing through $v_0=H\cap V$. We choose the cubic $X$ generic such that $v_0\not\in X$, then the intersection $S:=Q\cap X$ meets a general plane $\Pi_t$ in a smooth plane cubic. Thus, after resolving the ordinary singularities of $K$ which occur at points of $V\cap X$, which are generically Du Val $A_1$ singularities by the equation of the cone $Q$, the minimal crepant resolution $\widetilde S$ is a smooth K3 and the family $\{\Pi_t\cap X\}$ yields an elliptic fibration
\(\varphi:\widetilde S\to\mathbb P^1.\)

Then, we show that the two lines give sections: by hypothesis $L_0,L_1\subset H$ and neither equals some $\ell_t$, and moreover $p=L_0\cap L_1\neq v_0$. Fix a general parameter $t$. Since $\ell_t\subset\Pi_t$, the intersection $L_i\cap\Pi_t$ equals $L_i\cap\ell_t$, which is a single point for each $i=0,1$. Thus for general $t$ each $L_i$ meets the fibre $\Pi_t\cap X$ in exactly one point, and after resolution the strict transforms $\widetilde L_i$ meet the generic fibre in exactly one point. Hence each $\widetilde L_i$ is a section of $\varphi$.

Finally, genericity implies non-torsion. In the linear system of cubics containing the two fixed lines there is a positive-dimensional family. For generic choices, $L_i\cap \Pi_t\cap X$ is not a point of finite order in $\Pi_t\cap X$. Therefore a generic cubic in the family yields a non-torsion section $s$ (take $s:=\widetilde L_1$ for instance).
We conclude as Clemens by noting that multiples of $L_i\cap \Pi_t\cap X$ extend over the singular fibres to give everywhere defined sections $L_n\subset \tilde{S}$.

On an elliptic surface the N\'eron--Tate canonical height satisfies $\widehat h(ns)=n^2\widehat h(s)$ for all $n$, and the canonical height controls intersection with an ample divisor (hence the hyperplane degree) up to bounded error. Consequently the hyperplane degree of the image in $X$ of the section $n\cdot s$ grows like $n^2$, and the $n\cdot s$ are distinct for distinct $n$. For a generic choice of the ambient cubic these images are smooth rational curves in the smooth threefold $X$. Thus $X$ contains smooth rational curves of arbitrarily large degree, as stated.
\end{proof}

\subsubsection{Cubic threefold containing a plane but less rigid quadric }

Let $X \subset \mathbb{P}^4$ be a cubic containing the plane (general between those)
\[
P = \{x_0 = x_1 = 0\}.
\]
Choose a quadric $Q \subset \mathbb{P}^4$ whose restriction to $P$ is a smooth
plane conic $C_P$. For general such $Q$, the intersection
\[
S := X \cap Q
\]
is a smooth $(2,3)$–K3 surface in $\mathbb{P}^4$. Let
\[
h := H|_S \in \Pic(S)
\]
be the hyperplane class, and let
\[
p := [C_P] \in \Pic(S).
\]

\begin{prop}
The divisor
\[
D := h - p
\]
satisfies $D^2 = 0$ and defines an elliptic fibration 
\[
f : S \to \mathbb{P}^1.
\]
Moreover, we can choose a line $\ell \subset X$ disjoint from $C_P$ and a quadric
$Q$ through $\ell$ so that $\ell \subset S$ is a section of $f$. For a generic
choice, this section is not torsion, and its
multiples give rational curves on $S$ of arbitrarily large
degree.
\end{prop}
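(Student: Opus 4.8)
The plan is to reproduce the elliptic-surface argument of the cone construction, now anchored on the pencil of residual curves to the conic $C_P$. First I would record the intersection numbers on $S=X\cap Q$. As $S$ is a $(2,3)$ complete intersection, the hyperplane class satisfies $h^2=6$; the conic $C_P=P\cap Q\subset S$ is a smooth rational curve, so adjunction on the K3 surface gives $p^2=-2$, while $p\cdot h=\deg C_P=2$. Hence
\[
D^2=(h-p)^2=h^2-2\,h\cdot p+p^2=6-4-2=0,
\]
and adjunction gives $2g_D-2=D^2=0$, so every member of $|D|$ has arithmetic genus one. Geometrically $D$ is a residual class: each hyperplane $H_{[\lambda:\mu]}=\{\lambda x_0+\mu x_1=0\}$ contains $P$, hence contains $C_P$, so $H_{[\lambda:\mu]}\cap S=C_P+D_{[\lambda:\mu]}$ with $[D_{[\lambda:\mu]}]=h-p=D$, exhibiting a pencil inside $|D|$.

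Next I would upgrade this pencil to an elliptic fibration. Riemann--Roch on the K3 surface gives $\chi(\Oo_S(D))=2+\tfrac12 D^2=2$, and since $D$ is nef with $D^2=0$ its higher cohomology vanishes, so $\dim|D|=1$ and $|D|$ is exactly the pencil above. For generic $(X,Q)$ the Picard lattice is spanned by $h$ and $p$, so $D=h-p$ is primitive, and the standard theory of linear systems on K3 surfaces shows $|D|$ is base-point free and defines a genus-one fibration $f:S\to\mathbb P^1$ with connected fibers, the general one being the residual curve $D_{[\lambda:\mu]}$. For the section I would choose a line $\ell\subset X$ with $\ell\cap C_P=\varnothing$ and take the quadric $Q$ through $\ell$, so that $\ell\subset S$; such choices exist by a parameter count (lines sweep the Fano surface of $X$, disjointness from $C_P$ is open, and the quadrics containing $\ell$ and restricting to $C_P$ on $P$ form a nonempty family). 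Then
\[
\ell\cdot D=\ell\cdot h-\ell\cdot p=1-0=1,
\]
so $\ell$ is a section of $f$; in particular $f$ is an elliptic fibration.

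Finally I would invoke Mordell--Weil theory, exactly as in the cone construction. The group $\mathrm{MW}(f)$ is finitely generated with bounded torsion, and I would argue that a generic choice in the positive-dimensional family of admissible pairs $(\ell,Q)$ produces a non-torsion section: the point where $\ell$ meets the generic fibre varies with the choice, whereas torsion sections form a countable (indeed closed and proper) subfamily, so genericity forces non-torsion. Once $\ell$ is non-torsion, the N\'eron--Tate height satisfies $\widehat{h}(n\ell)=n^2\widehat{h}(\ell)$ with $\widehat{h}(\ell)>0$; since the height controls the intersection with the ample class $h$ up to bounded error, the hyperplane degrees of the multiple sections $n\cdot\ell$ grow like $n^2$, the $n\cdot\ell$ are pairwise distinct, and pushing them down to $X$ yields smooth rational curves of arbitrarily large degree. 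The hard part will be the non-torsion step: making rigorous that the family of sections genuinely varies and meets the torsion locus in only a proper subset (equivalently, exhibiting one non-torsion specialisation) is the delicate point, while the lattice computation and the K3 fibration structure are routine.
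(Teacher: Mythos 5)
Your proposal follows essentially the same route as the paper: the lattice computation $h^2=6$, $p^2=-2$, $h\cdot p=2$ giving $D^2=0$, Riemann--Roch plus nefness/base-point-freeness to get the elliptic fibration, the intersection number $\ell\cdot D=1$ to exhibit the section, and the Mordell--Weil/height argument for curves of unbounded degree. Your explicit identification of $|D|$ with the pencil of residual curves to hyperplanes through $P$ is a nice concrete supplement, and you correctly flag the non-torsion genericity step as the delicate point, which the paper also treats only by a genericity assertion.
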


\begin{proof}
We proceed in several steps.

First, on $S$ we have $h^2 = \deg S = 6$. The curve $C_P$ is a smooth
plane conic, hence a $(-2)$–curve on the K3, so $p^2 = -2$, and
\[
h \cdot p = \deg(C_P) = 2.
\]
From this, it follows that $D^2=0$, so $D$ is an isotropic divisor class on $S$.

Next, note that the class $D = h - p$ is primitive. Since $h$ is ample,
\[
h \cdot D = h^2 - h \cdot p = 6 - 2 = 4 > 0,
\]
so $-D$ is not effective. By Riemann--Roch on a K3 surface,
\[
\chi(\mathcal{O}_S(D)) = 2 + \frac{1}{2} D^2 = 2,
\]
hence $h^0(S, \mathcal{O}_S(D)) \ge 1$, so $|D|$ is nonempty.

Now, any fixed component of $|D|$ would be a $(-2)$–curve $E$ with $E \cdot D \le 0$.
But $E \cdot h > 0$ for every effective curve $E$, since $h$ is ample. For $p$
one computes
\[
D \cdot p = h \cdot p - p^2 = 2 - (-2) = 4 > 0.
\]
Thus no $(-2)$–curve intersects $D$ negatively; hence $D$ is nef and $|D|$, having no negative fixed components, is base-point-free.

By Saint--Donat, a primitive nef divisor $D$ with $D^2 = 0$ and $|D| \neq
\varnothing$ defines a morphism
\[
f : S \to \mathbb{P}^1
\]
whose fibers are genus-$1$ curves. Therefore $|D|$ induces an elliptic fibration
on $S$.

The cubic threefold $X$ has many lines $\ell
\subset X$. Choose $\ell$ not lying in $P$ and disjoint from $C_P$. Impose the additional condition that $Q$ contains $\ell$, which
cuts out a linear subspace in the space of quadrics; a general such $Q$ is
still smooth and gives a smooth $S$ containing both $C_P$ and $\ell$.

On $S$ we have:
\[
h \cdot \ell = \deg(\ell) = 1, \qquad
p \cdot \ell = |\ell \cap C_P| = 0,
\]
hence
\[
D \cdot \ell = (h-p)\cdot\ell = 1.
\]
So $\ell$ meets each fiber in one point, hence is a section of $f$.

For general choices, the section is non-torsion in the Mordell--Weil group, then its multiples give infinitely many distinct sections. Each such
section is a rational curve on $S$ and its degree with respect to $h$ grows
quadratically by the Shioda height formula. Thus $X$ contains rational curves
of arbitrarily large degree.

\medskip\noindent
This completes the proof.
\end{proof}
\section{Infinitesimal Abel-Jacobi}\label{S:infAJCri}
In this section, we generalize a result of Clemens \cite{C89} to the pair setting which gives a criterion for non-triviality of the infinitesimal Abel-Jacobi for curves on threefolds. The idea is to use global forms in $\mathbb{P}^4$ with poles along two hypersurfaces and the extension class of the normal sequence of the curve. Then, we apply it to the Fermat cubic threefold.
\subsection{The formula} The results in this sections are valid in more generality. To simplify the notation, we will stick to the threefold case inside $\mathbb{P}^4$.

Let $X=\{F=0\}\subset \mathbb{P}^4$ be a hypersurface of degree $d$, $Y\subset X$ a smooth divisor and let $C$ to be a non-singular curve inside $X$ transverse to $Y$. 

Recall the exact sequence (\ref{eq:II}) :

$$0\to T_C(-\log Y)\to T_X(-\log Y)|_C\to N_{C/X}\to 0 .$$

\begin{prop}\label{prop:ClemCrite} Let $i:C\to X$ denote the inclusion. Let $e$ be the extension class of the sequence:
\begin{equation}\label{eq:normal}
0\to N_{C/X}\to N_{C/\mathbb{P}^4}\to i^* N_{X/\mathbb{P}^4}\to 0
\end{equation}
Then, we have the following commutative diagram. We will write $H^0(N(-Y))$ instead of $H^0(N_{C/X}(-Y))$
$$
\begin{tikzcd}[
  column sep=small,
  row sep=normal,
  nodes={font=\small}
]
 H^0(N(-Y))\otimes H^0(\Omega_{\mathbb{P}^4}^4(2X)(\log Y)) 
    \ar[rr, "{\mathrm{res}|_X}"] 
    \ar[dd] 
 & & 
 H^0(N(-Y))\otimes H^0(\Omega_X^3(Y)\otimes N_{X/\mathbb{P}^4})
    \ar[d] \\ 
 & & 
 H^0(N(-Y))\otimes H^0(i^*(\Omega_X^3(Y)\otimes N_{X/\mathbb{P}^4})) 
    \ar[d, "e"] \\
 H^0(N(-Y))\otimes H^1(\Omega_X^{2}(\log Y))
    \ar[dr, "\dAJ "]
 & & 
 H^0(N(-Y))\otimes H^1(i^*(\Omega_X^3(Y)\otimes N_{C/X})) 
    \ar[dl, "\mu"] \\ 
 & H^1(\Omega_C^1) & 
\end{tikzcd}$$

with $\mu$ the pairing induced by cup-product and contraction, and, by abuse of notation $e$ denotes the mapping induced by cup-product with the obstruction class $e$.
\end{prop}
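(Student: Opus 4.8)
The plan is to reduce the hexagon to a single commutative square by exploiting bilinearity. Both composites are bilinear in the chosen $v\in H^0(N_{C/X}(-Y))$ and $\omega\in H^0(\Omega_{\mathbb{P}^4}^4(2X)(\log Y))$, so I fix $v$ and $\omega$ once and for all. After the adjunction identification $i^*(\Omega_X^3(Y)\otimes N_{C/X})\cong \Omega_C^1(\log Y)\otimes N_{C/X}^*$ (which comes from $K_X|_C\otimes N_{C/X}\cong K_C\otimes N_{C/X}^*$ together with $K_C(Y)=\Omega_C^1(\log Y)$ on the curve), the two pairing maps $\dAJ(-,v)$ and $\mu(-,v)$ are literally the \emph{same} morphism: each is the contraction of the $N_{C/X}^*$ factor against $v$, landing in $H^1(\Omega_C^1)$ via $\Omega_C^1(\log Y)(-Y)=\Omega_C^1$. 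Hence the whole diagram commutes as soon as the following square does:
\[
\begin{tikzcd}[column sep=large]
H^0(\Omega_{\mathbb{P}^4}^4(2X)(\log Y)) \ar[d,"\delta"'] \ar[r,"i^*\circ\mathrm{res}_X"] & H^0\big(i^*(\Omega_X^3(Y)\otimes N_{X/\mathbb{P}^4})\big) \ar[d,"\cup e"] \\
H^1(\Omega_X^2(\log Y)) \ar[r,"\mathrm{res}_C\circ i^*"'] & H^1\big(i^*(\Omega_X^3(Y)\otimes N_{C/X})\big)
\end{tikzcd}
\]
Here $\delta$ is the Griffiths connecting homomorphism realizing pole order $\le 2$ as $\mathrm{Gr}^2_F H^3(X\setminus Y)=H^1(\Omega_X^2(\log Y))$, the map $\mathrm{res}_X$ is the order-two Poincaré residue along $X$ (the twist by $N_{X/\mathbb{P}^4}=\Oo_X(X)$ is exactly the bookkeeping of the second-order pole), and $\mathrm{res}_C$ is the residue/contraction of \eqref{eq:IV} specialized to $Z=C$.

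To prove the square I would realize both vertical maps as connecting homomorphisms and compare them. The right-hand map $\cup e$ is, by construction, the connecting homomorphism of the normal-bundle sequence \eqref{eq:normal} (tensored by $\Omega_C^1(\log Y)\otimes N_{C/X}^*$ after the adjunction identification), whose extension class is $e$. The left-hand map $\delta$ is computed at the \v{C}ech level by the standard Griffiths algorithm—choose local primitives lowering the pole order along $X$ and take the coboundary—whose logarithmic analogue is provided by Asakura--Saito \cite{AS06}. The content of the lemma is that restricting the order-lowering data to $C$ and passing through the two residues recovers precisely the extension $N_{C/X}\hookrightarrow N_{C/\mathbb{P}^4}$ together with its class $e$; granting this, naturality of connecting homomorphisms closes the square, and hence the hexagon.

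The main obstacle is exactly this identification. Geometrically it is Clemens' observation that the indeterminacy in lifting the first-order residue of an order-two form, measured along $C$, is governed by how $C$ sits inside $X$ inside $\mathbb{P}^4$, i.e. by $e$. I would make it precise by a local computation on an affine cover $\{U_\alpha\}$ adapted to $C\subset X=\{F=0\}$: write $\omega$ with its order-two pole, extract $\mathrm{res}_X\omega$ as the leading polar coefficient valued in $K_X(Y)\otimes\Oo_X(X)$, choose local order-one primitives, and verify that the resulting \v{C}ech $1$-cocycle on $C$ coincides with the cup product of $i^*\mathrm{res}_X\omega$ with a \v{C}ech representative $\{e_{\alpha\beta}\}$ of $e$. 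The delicate points are keeping the second-order pole bookkeeping consistent (so that the $N_{X/\mathbb{P}^4}$-twist appears with the correct power), and checking that the transition data produced by lowering the pole order along $X$ agree, after restriction to $C$, with the $1$-cocycle defining the inclusion $N_{C/X}\hookrightarrow N_{C/\mathbb{P}^4}$. Once these cocycles are matched, the commutativity of the square—and therefore the asserted diagram—follows.
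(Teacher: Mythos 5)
Your reduction of the hexagon to a single square is sound: the identification $i^*(\Omega_X^3(Y)\otimes N_{C/X})\cong \Omega_C^1(Y)\otimes N_{C/X}^*$ via adjunction does make $\dAJ(-,v)$ and $\mu(-,v)$ the same contraction against $v$, and the square you isolate (Griffiths connecting map $\delta$ on the left, cup with $e$ on the right, residues horizontally) is exactly the content of the proposition. Your overall strategy — realize both verticals as connecting homomorphisms and invoke naturality — is also the same mechanism the paper uses.

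The gap is that you have only \emph{named} the key step rather than proved it: the claim that the pole-order-lowering data of $\omega$, restricted to $C$, reproduces the \v{C}ech cocycle of the extension $N_{C/X}\hookrightarrow N_{C/\mathbb{P}^4}$ is precisely ``the content of the lemma'' in your own words, and your proof of it is a plan for a local computation whose delicate points (the second-order pole bookkeeping, the matching of transition data) you flag but do not resolve. The paper closes this gap without any \v{C}ech computation by interpolating an explicit morphism of short exact sequences: from the sequence $0\to\hat{\Omega}_{\mathbb{P}^4}^3(X)(\log Y)\to\Omega_{\mathbb{P}^4}^3(X)(\log Y)\to\Omega_{\mathbb{P}^4}^4(2X)(\log Y)\to 0$ (whose transgression is your $\delta$), through its quotient by $\Omega_{\mathbb{P}^4}^3(\log Y)$ with the map $\wedge\frac{dF}{F}$, then via the contraction isomorphisms of Lemma \ref{lem:contr} to the twisted log-normal sequence of $X\subset\mathbb{P}^4$, and finally by restriction to the normal sequence of $C$ tensored with $\Omega_X^3(Y)$ (whose transgression is cup with $e$). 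Once that four-row diagram of sheaf sequences commutes — which is checked termwise, not cocycle-wise — naturality of the connecting maps gives the square for free. To complete your argument you would either need to carry out the local cocycle comparison in full, or, more efficiently, construct the intermediate sequences as the paper does so that the comparison becomes formal.
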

\begin{proof}
The arrow $\mathrm{res}|_X$ is given by restriction. We use the following Lemma to identify its target.

\begin{lem}\label{lem:rest} By abuse of notation, let us denote also by $Y$ a hypersurface in $\mathbb{P}^4$ such that when intersected with $X$ gives the surface $Y\subset X$. We assume the intersection in $\mathbb{P}^4$ is transverse. We have the following isomorphisms:
\begin{align*}
&\Omega_{\mathbb{P}^4}^4(X+Y)|_X\cong \Omega_X^3(Y)\\
& \Omega_{\mathbb{P}^4}^4(2X+Y)|_X\cong \Omega_X^3(Y)\otimes N_{X/\mathbb{P}^4}.
\end{align*}
\end{lem}

The commutativity of the diagram in the proposition, follows from commutativity of the following diagram with rows: the first one is giving by considering the closed forms and taking exterior differential. We denote by $\hat{\Omega}$ closed forms. The second row is induced by multiplication by $\frac{dF}{F}$ as indicated in the diagram.
The third is a twist of the log-normal sequence of $N_{X/\mathbb{P}^4}$. The last in the normal sequence of $C$. We have used the sequence (\ref{eq:II}) recalled above the proposition.

$$\begin{tikzcd}[
  column sep=normal,
  row sep=2em,             
  nodes={font=\normalsize} 
]
0 \ar[r] 
& \hat{\Omega}_{\mathbb{P}^4}^3(X)(\log Y) \ar[r] \ar[d] 
& {\Omega}_{\mathbb{P}^4}^3(X)(\log Y)\ar[r] \ar[d] 
& {\Omega}_{\mathbb{P}^4}^4(2X)(\log Y) \ar[r] \ar[d] 
& 0 \\
0 \ar[r] 
& \dfrac{\hat{\Omega}_{\mathbb{P}^4}^3(X)(\log Y)}{\hat{\Omega}_{\mathbb{P}^4}^3(\log Y)}  \ar[r] 
& \dfrac{{\Omega}_{\mathbb{P}^4}^3(X)(\log Y)}{{\Omega}_{\mathbb{P}^4}^3(\log Y)} \ar[r, "\wedge \frac{dF}{F}"] 
& \dfrac{\Omega_{\mathbb{P}^4}^4(2X)(\log Y)}{ \Omega_{\mathbb{P}^4}^4(X)(\log Y)}\ar[r] 
& 0 \\
0 \ar[r] 
& \begin{array}{c} T_X(-\log Y) \\ \otimes \\ \Omega_{\mathbb{P}^4}^4(X+Y) \end{array} \ar[r] \ar[d] \ar[u, "\cong"] 
& \begin{array}{c} T_{\mathbb{P}^4}(- \log Y) \\ \otimes \\ \Omega_{\mathbb{P}^4}^4(X+Y) \end{array} \ar[r] \ar[d] \ar[u, "\cong"] 
& \begin{array}{c} N_{X/\mathbb{P}^4} \\ \otimes \\ \Omega_{\mathbb{P}^4}^4(X+Y) \end{array} \ar[r] \ar[d] \ar[u, "\cong"] 
& 0 \\
0 \ar[r] 
& \begin{array}{c} N_{C/X} \\ \otimes \\ \Omega_X^3 (Y) \end{array} \ar[r]  
& \begin{array}{c} N_{C/\mathbb{P}^4} \\ \otimes \\ \Omega_X^3(Y) \end{array} \ar[r] 
& \begin{array}{c} N_{X/\mathbb{P}^4} \\ \otimes \\ \Omega_X^3(Y) \end{array}|_C \ar[r] 
& 0  
\end{tikzcd}$$

whose maps are justified by the lemmas \ref{lem:rest} and \ref{lem:contr}. The commutativity from the statement of the proposition follows by looking at the transgression maps from $H^0$ to $H^1$ in the morphism of long exact cohomology sequences associated to the morphism from the top short exact sequence in the diagram above to the bottom short exact sequence.

\begin{lem}\label{lem:contr} We have isomorphisms induced by contraction:
\begin{enumerate}
\item $$ T_X(-\log Y)\otimes \Omega_{\mathbb{P}^4}^4(X)(\log Y)\overset{\sim}\to \hat{\Omega}_{\mathbb{P}^4}^3(X)(\log Y)/ \hat{\Omega}_{\mathbb{P}^4}^3(\log Y) $$

\item $$ T_{\mathbb{P}^4}(-\log Y)\otimes \Omega_{\mathbb{P}^4}^4(X)(\log Y)\overset{\sim}\to {\Omega}_{\mathbb{P}^4}^3(X)(\log Y)/ {\Omega}_{\mathbb{P}^4}^3(\log Y) $$

\item $$ N_{X/\mathbb{P}^4}\otimes \Omega_{\mathbb{P}^4}^4(X+Y) \overset{\sim}\to {\Omega}_{\mathbb{P}^4}^4(2X+Y)/ {\Omega}_{\mathbb{P}^4}^4(X+Y) $$

\end{enumerate}

\end{lem}

\begin{proof}
Let us prove (3):
By twisting the exact sequence
$$0\to \Oo(-X)\to \Oo \to \Oo_X\to 0  $$

By $K:=\Omega_{\mathbb{P}^4}^4$, $\Oo(2X), \Oo(Y)$ and using that $\Oo_X(X)\cong N_{X/\mathbb{P}^n}$, we obtain that 
$$0\to K(X+Y)\to K(2X+Y)\to K(Y)\otimes N_{X/\mathbb{P}^4}^{\otimes 2}\to 0 $$

Note that the last sheaf is supported on $X$. Using this, adjunction and Lemma \ref{lem:rest}, we obtain that
$$\frac{K(2X+Y)}{K(X+Y)}\cong K_X(Y)\otimes N_{X/\mathbb{P}^4}\cong K_{\mathbb{P}^4}(X+Y)\otimes N_{X/\mathbb{P}^4}. $$

The others can be proved by local computations.
\end{proof}

\end{proof}
\subsection{Larger split exact sequence}

In this section, we give an enlargement of the sequence (\ref{eq:normal}) such that it is a split short exact sequence.
 
Let us consider $C'\subset \mathbb{P}^4$ a second curve such that $C\cap C'=\{p_1,\ldots,p_m\}$, $C'$ is smooth and transverse at every point $p_i$ for $i=1,\ldots, m$.
If we denote by $U=C\setminus \{p_1,\ldots, p_m\}$ and by $j:U\to C$ the inclusion, we can define $$N_{C/\mathbb{P}^4}(\ast C\cap C'):=j_*(N_{C/\mathbb{P}^4}|_U), $$
the sheaf of meromorphic section with arbitrary poles along $C\cap C'$.

Inside, this sheaf, let 
$$N_{C/\mathbb{P}^4}(\log C')\subset N_{C/\mathbb{P}^4}(\ast C\cap C') $$
denote the sheaf consisting of those sections which have at most a simple pole at each $p_j$ in the direction of $C'$. In local coordinates $(z_0,z_1,z_2,z_3)$, suppose that a point $p$ is the origin and $C$ is given locally by $\{z_1=z_2=z_3=0\}$. Hence, a local basis of $N_{C/\mathbb{P}^4}$ is given by $\partial_{z_i}$ for $i=1,2,3$. If $C'$ is given by $\{z_0=z_2=z_3=0\}$, a local section of $N_{C/\mathbb{P}^4}(\log C')$ can be expressed as $\sum_{i=1}^3 a_i(z)\partial_{z_i}$ with $a_2(z), a_3(z)$ holomorphic and $a_1(z)$ meromorphic having a simple pole at zero.

In a similar fashion, we can define $N_{C/X}(\log C')$. Note that we have an exact sequence given by residues
\begin{equation}\label{eq:residues}
0\to N_{C/X}\to N_{C/X}(\log C')\to \oplus_{j=0}^m \mathbb{C}_j\to 0
\end{equation}

\begin{prop}\label{prop:vanExtClass} Suppose that there exists a surface $S\subset \mathbb{P}^4$ such that $$S\cap X=C+C'$$ with $C, C'$ as above, $S$ smooth along $C$. Then the following sequence is split:
\begin{equation}\label{eq:split}
0\to N_{C/X}(\log C') \to N_{C/\mathbb{P}^4}(\log C') \to i^*N_{X/\mathbb{P}^4}\to 0
\end{equation}
\end{prop}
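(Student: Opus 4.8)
The plan is to produce an explicit splitting of (\ref{eq:split}) out of the surface $S$ itself. Since $C\subset S\subset\mathbb{P}^4$ and $S$ is smooth along $C$, the conormal inclusion of $S$ gives a sub-line-bundle
$$0\to N_{C/S}\to N_{C/\mathbb{P}^4}\to N_{S/\mathbb{P}^4}|_C\to 0,$$
and my candidate splitting will be the sub-line-bundle $N_{C/S}(\log C')\subset N_{C/\mathbb{P}^4}(\log C')$. Concretely, I claim that the projection $\pi$ in (\ref{eq:split}) restricts to an \emph{isomorphism} on $N_{C/S}(\log C')$; this formally produces the splitting $s:=\bigl(\pi|_{N_{C/S}(\log C')}\bigr)^{-1}\colon i^*N_{X/\mathbb{P}^4}\to N_{C/\mathbb{P}^4}(\log C')$, because a sub-object on which the surjection is an isomorphism always splits a short exact sequence.

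First I would analyze the composition
$$\alpha\colon N_{C/S}\hookrightarrow N_{C/\mathbb{P}^4}\xrightarrow{\ dF\ } i^*N_{X/\mathbb{P}^4},$$
where the second arrow is the $dF$-contraction realizing the surjection of (\ref{eq:normal}). Because $S\cap X=C+C'$, the restriction $g:=F|_S$ is a section of $\Oo_S(d)$ with zero divisor $C+C'$; as a vector in $N_{C/S}$ is tangent to $S$, the contraction of $dF$ with such a vector equals the normal derivative of $g$ along $C$. The crux is a local computation: at a general point of $C$ only the branch $C$ passes, so $g=u\cdot y$ with $C=\{y=0\}$ and $u$ a unit, whence $\alpha$ is an isomorphism there; at a point $p_j\in C\cap C'$, transversality furnishes coordinates with $C=\{y=0\}$, $C'=\{x=0\}$ and $g=u\cdot xy$, so the normal derivative $dg(\partial_y)|_C=u\cdot x$ vanishes to order exactly one at $p_j$. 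Hence $\alpha$ is injective with cokernel $\bigoplus_j\mathbb{C}_j$, i.e.
$$\alpha\colon N_{C/S}\xrightarrow{\ \sim\ } i^*N_{X/\mathbb{P}^4}\Bigl(-\textstyle\sum_j p_j\Bigr).$$

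Next I would identify $N_{C/S}(\log C')$ with $N_{C/S}(\sum_j p_j)$ and twist. Since $C'\subset S$ and $C'$ meets $C$ transversely inside the smooth surface $S$, the tangent line $T_{p_j}C'$ maps to a nonzero vector spanning the rank-one fiber $N_{C/S}|_{p_j}=T_{p_j}S/T_{p_j}C$. Therefore, for the line bundle $N_{C/S}$, allowing a simple pole along $C'$ is exactly the twist by $\sum_j p_j$, and the pole direction, being tangent to $S$, lies in $N_{C/S}\subset N_{C/\mathbb{P}^4}$; this makes the inclusion $N_{C/S}(\log C')\hookrightarrow N_{C/\mathbb{P}^4}(\log C')$ well defined and compatible with the residue description (\ref{eq:residues}). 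Twisting the isomorphism of the previous paragraph by $\sum_j p_j$ then yields
$$N_{C/S}(\log C')\xrightarrow{\ \sim\ } i^*N_{X/\mathbb{P}^4},$$
which is precisely $\pi$ restricted to this sub-line-bundle, so (\ref{eq:split}) splits.

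The main obstacle is the local verification at the points $p_j$: one must confirm that $g=F|_S$ vanishes to order one along each branch, so that $C$ and $C'$ occur with multiplicity one in $S\cap X$ and $\alpha$ has only simple zeros, and that the log-pole direction dictated by $C'$ coincides with the $N_{C/S}$-direction rather than a genuinely $\mathbb{P}^4$-normal direction. The smoothness of $S$ along $C$ and the transversality of $C'$ to $C$ are exactly the hypotheses that make this clean; the only point requiring care is that every identification be carried out on $C$ with its reduced structure, so that the simple zeros of $\alpha$ match the skyscraper terms of (\ref{eq:residues}).
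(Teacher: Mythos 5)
Your proof is correct and follows essentially the same strategy as the paper: both exploit the surface $S$ to exhibit the tangent-to-$S$ directions as a complement to $N_{C/X}(\log C')$ inside $N_{C/\mathbb{P}^4}(\log C')$, using that the allowed pole direction along $C'$ lies in $T_S$. The only difference is one of packaging — you verify that the quotient map restricts to an isomorphism on the sub-line-bundle $N_{C/S}(\log C')\to i^*N_{X/\mathbb{P}^4}$ via the divisor computation $F|_S=C+C'$, while the paper checks the equivalent dual statement that $N_{C/X}(\log C')$ maps isomorphically onto $N_{S/\mathbb{P}^4}|_C$ under projection modulo $T_S$.
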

\begin{proof}
We can define a map
$$\Phi: N_{C/\mathbb{P}^4}(\log C')\to N_{S/\mathbb{P}^4}|_C $$
by restricting a normal vector field in $N_{C/\mathbb{P}^4}(\log C')$  to $S$ then mod out by the tangent bundle $T_S$. Because we are allowing only simple poles in the $C'$ direction, which lies inside $T_S$ directions on $S$, this projection is well-defined.

By considering the restriction
$$\Phi|_{N_{C/X}(\log C')}:N_{C/V}(\log C')\overset{\sim}\to N_{S/Y}|_{C'}$$
and showing that it is an isomorphism (using local coordinates), we can construct an splitting of (\ref{eq:split}) by pulling-back a generator of $i^*N_{X/\mathbb{P}^4}$ to a tangent vector to $S$.
\end{proof}
\subsection{Criteria for non-triviality of the infinitesimal AJ}

\begin{thm}\label{prop:CriteriaInfAJ} If 
\begin{enumerate}
\item there is an element of $H^0(\Omega_{\mathbb{P}^4}^4(2X)(\log Y))$ which does not vanish at $p_0$ but does vanish at the others $p_j$, and 
\item there is an element $v\in H^0(N_{C/X}(-Y))$ which wedges with a tangent vector $C'$ at $p_0$ to give a non-zero element of the geometric fiber of $\det N_{C/X}(-Y)$ at $p_0$, 
\end{enumerate}
then the infinitesimal Abel-Jacobi map is non-trivial at $C$.
\end{thm}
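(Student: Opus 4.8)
The plan is to combine the commutative diagram of Proposition \ref{prop:ClemCrite} with the splitting furnished by Proposition \ref{prop:vanExtClass}, reducing the non-triviality of $\dAJ$ at $C$ to an explicit residue computation at the point $p_0$. Concretely, I would trace the element $v \otimes \omega$, where $v \in H^0(N_{C/X}(-Y))$ is the section in hypothesis (2) and $\omega \in H^0(\Omega_{\mathbb{P}^4}^4(2X)(\log Y))$ is the form in hypothesis (1), through both legs of the diagram in Proposition \ref{prop:ClemCrite}. The upper-right leg sends $\omega$ via $\mathrm{res}|_X$ into $H^0(\Omega_X^3(Y)\otimes N_{X/\mathbb{P}^4})$, then restricts to $C$, then applies cup-product with the extension class $e$ of the normal sequence (\ref{eq:normal}) to land in $H^1(i^*(\Omega_X^3(Y)\otimes N_{C/X}))$, where $\mu$ pairs it against $v$. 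The left leg is exactly $\dAJ$ applied to $v$ after passing $\omega$ to $H^1(\Omega_X^2(\log Y))$. By the commutativity established in Proposition \ref{prop:ClemCrite}, showing that the right leg yields a nonzero class in $H^1(\Omega_C^1)$ forces $\dAJ(v)\neq 0$.

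The key step is to compute the extension-class cup product $e$ explicitly using the splitting. The exact sequence (\ref{eq:normal}) is \emph{not} split, and its extension class $e$ is what detects the Abel-Jacobi image; the role of the residual curve $C'$ is to enlarge (\ref{eq:normal}) to the split sequence (\ref{eq:split}) via $N_{C/\mathbb{P}^4}(\log C')$. I would first observe that under the residue sequence (\ref{eq:residues}) the obstruction class $e$ becomes, after allowing logarithmic poles along $C'$, a sum of local contributions concentrated at the points $p_0,\ldots,p_m$. Hypothesis (1) ensures $\omega$ has a residue supported only at $p_0$ among these points, so the cup product $e\cup (\mathrm{res}|_X\,\omega)$ localizes to a single Čech contribution at $p_0$ coming from the nontrivial gluing of $N_{C/X}$ against $i^*N_{X/\mathbb{P}^4}$ across $p_0$. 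Concretely, in the local coordinates of Proposition \ref{prop:vanExtClass} where $C=\{z_1=z_2=z_3=0\}$ and $C'=\{z_0=z_2=z_3=0\}$, the splitting of (\ref{eq:split}) identifies the failure of (\ref{eq:normal}) to split with the simple pole in the $C'$-direction, so the pairing reads off the coefficient of that pole.

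I would then show that this localized class is nonzero precisely by hypothesis (2): pairing under $\mu$ involves the contraction of $v$ with the normal generator of $i^*N_{X/\mathbb{P}^4}$, and the nonvanishing of $v \wedge T_{p_0}C'$ in the geometric fiber of $\det N_{C/X}(-Y)$ at $p_0$ guarantees that the residue at $p_0$ does not annihilate $v$. Thus the image in $H^1(\Omega_C^1)\cong \mathbb{C}$ is a nonzero multiple of the local residue determined by hypotheses (1) and (2) at $p_0$, and commutativity of the diagram transfers this nonvanishing back to $\dAJ(v)$.

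I expect the main obstacle to be the careful bookkeeping of the local residue computation at $p_0$: one must verify that the Čech representative of the cup product $e\cup(\mathrm{res}|_X\,\omega)$, computed through the split sequence (\ref{eq:split}) and the residue sequence (\ref{eq:residues}), genuinely records the $C'$-directional pole of $\omega$ rather than a coboundary, and that the transgression maps in the morphism of long exact sequences from Proposition \ref{prop:ClemCrite} match this up with $\dAJ$ with the correct sign and normalization. In particular, the subtle point is that the contribution at $p_0$ survives in $H^1$ while the contributions at the $p_j$ ($j\geq 1$) vanish by hypothesis (1); making this vanishing-versus-survival dichotomy rigorous via the local description of $N_{C/X}(\log C')$ is where the real work lies.
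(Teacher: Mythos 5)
Your proposal matches the paper's proof in all essentials: localize the extension class $e$ to the points $p_j$ via the residue sequence (\ref{eq:residues}) and the splitting of Proposition \ref{prop:vanExtClass}, use hypothesis (1) to concentrate the pairing at $p_0$ and hypothesis (2) to make the local contribution there nonzero, then transfer back through the commutative diagram of Proposition \ref{prop:ClemCrite}. The one step you flag as the main obstacle---that the contribution at $p_0$ survives in $H^1(\Omega_C^1)$ rather than being a coboundary---is settled in the paper by the residue theorem: since the residues of a meromorphic differential on $C$ sum to zero, the map $H^0(\mathbb{C}_0)\to H^1(\Omega_C^1)$ is an isomorphism.
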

\begin{proof}
 In order to compute the extension class $e$ of the sequence (\ref{eq:normal}):
$$e\in \Ext_C^1(i^* N_{X/\mathbb{P}^4}, N_{C/X})\cong H^1(C,(i^* N_{X/\mathbb{P}^4})^*\otimes N_{C/X} ),$$
let us apply $\RHom_C(i^* N_{X/\mathbb{P}^4},\bullet)$ to the sequences (\ref{eq:residues}):
\begin{align*}
\ldots \to \Ext_C^0(i^* N_{X/\mathbb{P}^4}, N_{C/X}(\log C'))
 &\to  \Ext_C^0(i^* N_{X/\mathbb{P}^4}, \oplus_{j=0}^m \mathbb{C}_j) \to \\ &\Ext_C^1(i^* N_{X/\mathbb{P}^4},N_{C/X})\to 0.
\end{align*}
Where $\Ext_C^1(i^* N_{X/\mathbb{P}^4}, N_{C/X}(\log C'))=0$ by Proposition \ref{prop:vanExtClass}. This is saying that we can see the class $e$ as a class in $H^0((i^* N_{X/\mathbb{P}^4})^*\otimes (\oplus \mathbb{C}_j))$ modulo $H^0((i^* N_{X/\mathbb{P}^4})^*\otimes N(\log C'))$.

We consider the following commutative diagram. Let us write $N$ for $N_{C/X}$.
$$
\begin{tikzcd}[
  column sep=0.5em,  
  row sep=3em,       
  nodes={font=\small, inner sep=2pt} 
]
 H^0(N(-Y))\otimes H^0(\Omega_X^3(Y)\otimes N_{X/\mathbb{P}^4})\otimes H^0((i^* N_{X/\mathbb{P}^4})^*\otimes N(\log C'))
   \ar[r] \ar[d] 
 & H^0(\Omega_C^1(\sum p_j))
   \ar[d] \\ 
 H^0(N(-Y))\otimes H^0(\Omega_X^3(Y)\otimes N_{X/\mathbb{P}^4})\otimes H^0((i^* N_{X/\mathbb{P}^4})^*\otimes (\oplus \mathbb{C}_j))
   \ar[r]\ar[d] 
 & H^0(\oplus \mathbb{C}_j)
   \ar[d] \\
 H^0(N(-Y))\otimes H^0(\Omega_X^3(Y)\otimes N_{X/\mathbb{P}^4})\otimes H^1((i^* N_{X/\mathbb{P}^4})^*\otimes N)
   \ar[r]\ar[d] 
 & H^1(\Omega_C^1)
   \ar[d] \\
 H^0(N(-Y))\otimes H^0(\Omega_X^3(Y)\otimes N_{X/\mathbb{P}^4})\otimes H^1((i^* N_{X/\mathbb{P}^4})^*\otimes N(\log C'))
   \ar[r] 
 & H^1(\Omega_C^1(\sum p_j)) 
\end{tikzcd}$$

If the extension class $e\not =0$ is non-trivial, suppose that $p_0$ is a point where its restriction is non-zero. As the sum of the residues of a meromorphic differential is zero, the map $H^0(\mathbb{C}_0)\to H^1(\Omega_C^1)$ is an isomorphism. Therefore under the hypothesis and using the diagram in the proof of Proposition \ref{prop:ClemCrite}, we conclude the proof.

\end{proof}

\begin{exmp} Let us consider the Fermat cubic threefold $X=\{\sum_{i=0}^4 z_i^3=0\}\subset \mathbb{P}^4$. 

Recall that a point $p\in X$ is called an \textit{Eckardt point} if the intersection of $X$ with its tangent hyperplane $T_p X$ is a cone with vertex $p$. We will consider a line $L$ is the ruling of this cone. Its normal bundle splits as $N_{L/X}=\Oo(1)\oplus \Oo(-1)$.

Let $p=[1:-1:0:0:0]$. The tangent hyperplane at $p$ is $T_p X=\{z_0+z_1=0\}$. The intersection $X\cap T_p X$ is defined by 
$$z_0+z_1=0, \quad \text{and } z_2^3+z_3^3+z_4^3=0. $$
This is a cone with vertex $p$ over the elliptic curve $E= \{z_2^3+z_3^3+z_4^3=0\}\subset \mathbb{P}^2$. We choose the line $L$ connecting $p$ to the point $z=[0:0:1:-1:0]\in E$.

We will consider $H\subset X$ a generic hyperplane section passing through $p$. For concreteness, we can take $H=\{z_2+z_3+z_4=0\}$. 

Choose a plane $S=\left\lbrace p,z,q \right\rbrace$ spanned by $L$ and the point $q=[0:1:1:0:0]$. The restriction of the cubic form $F$ to $S$ factors as:
$$F|_S=\nu\cdot Q(\lambda,\mu,\nu) $$
where $\nu=0$ defines the line $L$. An explicit calculation shows that restricted to $L$, the quadric becomes $Q|_L=3(\lambda^2+\mu^2)$. The roots of this quadric correspond to the intersection points of the residual conic $C'$ with $L$. These points are:
$$p_0=[1:-1:i:-i:0] \quad \text{and} \quad p_1=[1:-1:-i:-i:0]. $$
The conic $C'$ intersects $L$ transversely at $p_0$ and $p_1$.

We identify $H^0(\Omega_{\mathbb{P}^4}^4(2X))\cong H^0(\Oo_{\mathbb{P}^4}(1))$. To have condition (1) of the proposition \ref{prop:CriteriaInfAJ} above, we require a linear form $l$ such that $l(p_1)=0$ and $l(p_0)\not =0$. Consider the linear form:
$$l(z)=z_0+iz_3. $$

Now, as the line $L$ belongs to a cone, we have that 
$$N_{L/X}(-H)\cong \Oo\oplus \Oo(-2). $$
We can compute a generator of $H^0(N_{L/X}(-H))$ by determining the kernel of the map $\phi:H^0(N_{L/\mathbb{P}^4}(-1))\to H^0(\Oo_L(2))$. Using the basis of normal directions $\partial_{z_0}+\partial_{z_1},\partial_{z_2}+\partial_{z_3}$ and $\partial_{z_4}$,  we find that the only direction preserving the defining equation to first order along $L$ is given by $v=\partial_{z_4}$.

To check the second condition of \ref{prop:CriteriaInfAJ}, note that, at $p_0$ the section $v$ corresponds to the vector $(0,0,0,0,1)$. The tangent to the conic $T_{C'}$ corresponds to $(0,-1+i,i,-1,0)$. The wedge product in the normal fiber is determined by the determinant of their components in the transverse directions $z_1$ and $z_4$:
$$\det \left(\begin{array}{ll}
v_{z_4} & (T_{C'})_{z_4} \\
v_{z_1} & (T_{C'})_{z_1}
\end{array} \right)=\det \left(\begin{array}{ll}
1 & 0 \\
0 & -1+i
\end{array} \right) =-1+i\not =0.  $$
Since the determinant is non-zero, the sections wedge non-trivially. 
\end{exmp}

\begin{rem} The original motivation of Clemens' work in \cite{C89} was to show the existence of a smooth rational curve $C$ embedded on a quintic threefold (smooth along $C$), such that any multiple of $C$ does not move nor does it contract - showing a contrast with the surface case. 

In the LMMP program, a smooth rational curve $C$ on a surface pair $(S,\Delta)$  moves fixing the boundary if $H^0(N_{C/S}(-\Delta|_C))\not =0$. If we follow the construction of Clemens, we obtain a smooth conic $C$ inside a cubic threefold $X$ with normal bundle $\mathscr{O}(2)+\Oo$. By a generalization of \cite[Lecture 16]{CKM88} to the relative setting, we can show that $C$ does not admit a relative contraction. However, $$H^0(N_{C/X}(-H))\cong H^1(N_{C/X}(-H))^*\not =0, $$
but we are not able to show that $C$ is obstructed. One of the main reasons is the lack of a holomorphic volume form for the pair $(X,H)$ with poles along $H$, a crucial ingredient in Clemens' arguments.
\end{rem}

\section{Normal functions and invariants}\label{S:NorFun}
The constructions in this section may be well-known to experts. We include it for completeness.
\subsection{log-normal functions}

Let $X$ be smooth projective of dimension $n$, $Y\subset X$ a smooth divisor and $Z\subset U:=X-Y$ a smooth closed codim $p$ algebraic cycle. 

Then $Z$ gives a homology class in Borel-Moore
$$[Z]^{BM}\in H_{2n-2p}^{BM}(U) $$

We have the long exact sequence of the pair $(U,U\setminus Z)$
\begin{equation}\label{eq:LES-BM}
\cdots \to H^{2p-1}(U\setminus Z, \mathbb{Q})\to H^{2p}_Z(U,\QQ)\to H^{2p}(U,\QQ)\to H^{2p}(U\setminus Z, \QQ) \to \cdots
\end{equation}

As $Z$ is smooth, we have the purity isomorphism
$$H^{m-2p}(Z)(-p)\overset{\cong}\to H_Z^m(U) $$
compatible with Hodge structures. In particular, 
\begin{equation}\label{eq:Pur0}
[Z]\in H^0(Z)(-p) \cong H_Z^{2p}(Z),
\end{equation}
where $[Z]$ denotes the class dual to $[Z]^{BM}$ via the isomorphism given by cap product with the fundamental class $[Z]^{BM}$:
\begin{equation}\label{eq:PD-BM}
H^{i}(Z)\overset{\sim}\to H_{2n-2p-i}^{BM}(Z).
\end{equation}

All the above can be refined to allow $Z$ singular. 

\begin{lem} Assume the class $[Z]^{BM}=0$ vanishes in $H_{2n-2p}^{BM}(U)$, then the class $[Z]\in H_Z^{2p}(Z)$ goes to zero under the map 
$$ H_Z^{2p}(U,\QQ)\to H^{2p}(U,\QQ). $$
\end{lem}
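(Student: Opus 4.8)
The plan is to reduce the statement to the compatibility of the purity isomorphism with Poincaré--Lefschetz duality, together with functoriality of Borel--Moore homology under the inclusion $i:Z\hookrightarrow U$. The underlying principle is that the forget-supports map $H_Z^{2p}(U)\to H^{2p}(U)$ becomes, after dualizing, the proper pushforward $i_*$ in Borel--Moore homology, and that the purity class $[Z]$ of \eqref{eq:Pur0} corresponds to the fundamental class of $Z$, whose pushforward into $U$ is by definition $[Z]^{BM}$.

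First I would assemble the commutative square
$$\begin{tikzcd}
H_Z^{2p}(U) \ar[r] \ar[d,"\cong"'] & H^{2p}(U) \ar[d,"\cong"] \\
H_{2n-2p}^{BM}(Z) \ar[r,"i_*"] & H_{2n-2p}^{BM}(U)
\end{tikzcd}$$
in which the top row is the forget-supports map appearing in \eqref{eq:LES-BM}, the bottom row is the Borel--Moore pushforward along $i$, and the two vertical arrows are the Poincaré--Lefschetz duality isomorphisms for the smooth (hence canonically oriented) complex manifolds $U$ and $Z$, of real dimensions $2n$ and $2n-2p$ respectively. The left vertical arrow is exactly the purity isomorphism \eqref{eq:Pur0}, reinterpreted homologically: composing with the case $i=0$ of \eqref{eq:PD-BM}, namely $H^0(Z)(-p)\cong H_{2n-2p}^{BM}(Z)$, sends $[Z]\in H^0(Z)$ to the fundamental class of $Z$ in $H_{2n-2p}^{BM}(Z)$, since $[Z]$ is by construction its Poincaré dual.

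Next I would trace $[Z]$ around the square. Going down then right sends $[Z]$ to $i_*[Z]=[Z]^{BM}$, which is precisely the Borel--Moore class of the cycle $Z$ in $U$. Going right then down sends $[Z]$ first to its image under the forget-supports map in $H^{2p}(U)$, and then to this same element $[Z]^{BM}$ under the right-hand duality isomorphism. By hypothesis $[Z]^{BM}=0$; since the right vertical arrow is an isomorphism, the image of $[Z]$ in $H^{2p}(U)$ must vanish, which is exactly the assertion of the lemma.

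The main obstacle is the verification that this square genuinely commutes, and in particular the identification of the two a priori different descriptions of the left vertical arrow: one must check that the algebraic purity isomorphism used to single out $[Z]\in H_Z^{2p}(U)$ as a Hodge class agrees, up to the Tate twist $(-p)$, with the topological Poincaré--Lefschetz duality $H_Z^{2p}(U)\cong H_{2n-2p}^{BM}(Z)$, and that the forget-supports map is carried to $i_*$ under duality. These are standard compatibilities in the Borel--Moore/six-functor formalism, where duality is natural both for the open inclusion $U\hookrightarrow X$ and for the closed inclusion $Z\hookrightarrow U$; they require care to state cleanly but, once granted, the diagram chase above concludes the proof at once. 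I would also note in passing that nothing in the argument uses smoothness of $Z$ beyond the existence of a fundamental class, so the same reasoning extends to the singular refinement mentioned after \eqref{eq:PD-BM}.
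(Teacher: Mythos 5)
Your proof is correct and follows essentially the same route as the paper: both set up the commutative square identifying the forget-supports map $H_Z^{2p}(U)\to H^{2p}(U)$ with the Borel--Moore pushforward $H_{2n-2p}^{BM}(Z)\to H_{2n-2p}^{BM}(U)$ via the purity and Poincar\'e--Lefschetz duality isomorphisms, and then trace $[Z]$ around the square using the hypothesis $[Z]^{BM}=0$. Your additional remarks on verifying the compatibility of purity with duality only make explicit what the paper leaves implicit.
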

\begin{proof}
Composing the isomorphisms (\ref{eq:Pur0}) with (\ref{eq:PD-BM}), we obtain 
$$H_Z^{2p}(U)\overset{\sim}\to H_{2n-2p}^{BM}(Z). $$
We obtain a commutative diagram
$$\begin{tikzcd}
H_Z^{2p}(U) \ar[r, " \sim"] \ar[d] & H_{2n-2p}^{BM}(Z) \ar[d]\\
H^{2p}(U)\ar[r,"\sim"]& H_{2n-2p}^{BM}(U)
\end{tikzcd} $$
but by assumption, $[Z]^{BM}$ maps to zero  $H_{2n-2p}^{BM}(U)$ with the right vertical map. From this we obtain the claim.

\end{proof}

Now, assuming the class $[Z]^{BM}=0$ vanishes in $H_{2n-2p}^{BM}(U)$, we obtain from the long-exact sequence (\ref{eq:LES-BM}) that there is a class 
$$[Z']\in H^{2p-1}(U\setminus Z) $$
mapping to $[Z]\in H_Z^{2p}(U)$. This gives an extension 
$$0\to H^{2p-1}(U)\to E_Z\to \QQ(-p)\to 0$$
Therefore, we obtain an extension class 
$$[e_Z]\in \Ext_{MHS}^1(\QQ(-p),H^{2p-1}(U)).$$

\subsection{First construction of the infinitesimal invariant}

Let $X$ be smooth projective and $Y$ a smooth divisor as above. Consider the extension 
$$0\to H^{2p-1}(U)(p)\to E\to \mathbb{Q}(0)\to 0 $$
with the weights of $H^{2p-1}(U)(p)$ being $-1$ and $0$.

Suppose this data varies in a family $S$. Equivalently, that we have a VMHS of the form 
$$0\to \mathcal{H}\to \mathcal{E}\overset{\pi}\to \QQ_S(0)\to 0 $$
As 
\begin{align*}
F^0\QQ_S(0)=\QQ_S(0)\\
F^1\QQ_S(0)=0
\end{align*}
and $\pi$ respects the Hodge filtration, we have that $\pi$ factors via the quotient
$$\bar{\pi}:\mathcal{E}/F^1\mathcal{E}\to \QQ_S(0)  $$

In a polydisk $\Delta\subset S$, there exists a holomorphic section
$$\sigma \in H^0(\Delta, \mathcal{E}/F^1\mathcal{E})$$
lifting the constant section $1\in \QQ_S(0)$.

Pick a lift $s\in H^0(\Delta, \mathcal{E})$ of $\sigma$. We have that 
$$\nabla_{\mathcal{E}}S\in \Omega_\Delta^1\otimes \mathcal{E}. $$

As the section $1\in \QQ_S(0)$ and the Hodge filtration are constant, then $\nabla _{\mathcal{E}}s $ under the projection $\mathcal{E}\to \QQ(0)$ vanishes, we have then that 
$$\nabla_{\mathcal{E}}s\in H^0(\Delta, \Omega_{\Delta}^1\otimes \mathcal{H}). $$

To have a well-defined invariant, we have to mod-out by the ambiguities that can come fro $H^0(\Delta, \mathcal{H})$ or from $H^0(\Delta, F^1\mathcal{E})$.

We can therefore define the infinitesimal invariant 
 $$\delta \nu:= [\nabla s]\in H^0\left(\Delta, \frac{\Omega_{\Delta}^1\otimes \mathcal{H}}{\nabla (\mathcal{H})+\Omega_\Delta^1\otimes F^0 \mathcal{H}}\right) $$

\subsection{Hodge complexes}

Here, give a construction of the first infinitesimal invariant in the quasi-projective case following Voisin's book \cite{V02}.

Let us consider families $\pi_X:\mathcal{X}\to B$ and $\pi_Y:\mathcal{Y}\to B$ of smooth projective varieties with $\mathcal{Y}\subset \mathcal{X}$ a relative smooth divisor and $B$ a Stein ball. We can consider the local system $$H^p:=R^p\pi_* \mathbb{C} $$ with $\pi: \mathcal{X} \setminus \mathcal{Y} \to B$.

We have the de Rham holomorphic complex 
$$DR(H^p):= 0\to \mathcal{H}^p \overset{\nabla}\to \mathcal{H}^p \otimes \Omega_B^1 \ldots \overset{\nabla}\to \mathcal{H}^p \otimes \Omega_B^N\to 0$$
with $N=\dim B$. We can filter this complex by using Griffiths transversality
$$F^l DR(H^p):=0\to F^l\mathcal{H}^p \overset{\nabla}\to F^{l-1}\mathcal{H}^p \otimes \Omega_B^1 \ldots \overset{\nabla}\to \mathcal{H}^p \otimes F^{l-N}\Omega_B^N\to 0 . $$
We can take the graded complex 
$$Gr^l_F DR(H^p)= F^l DR(H^p)/F^{l+1}DR(H^p) $$
If we define $$\mathcal{H}^{l,r}:= F^l \mathcal{H}^p / F^{l+1}\mathcal{H}^{p+1},$$
we obtain that:
$$Gr^l_F DR(H^p)^k= \mathcal{H}^{l-k, p-l+k}\otimes \Omega_B^k .$$

\subsection{Holomorphic Leray filtration}
We have an inclusion of holomorphic vector bundles
$$\pi^* \Omega_B^\bullet \to \Omega_\mathcal{X}^\bullet(\log \mathcal{Y}), $$
with quotient $\Omega_{\mathcal{X}/B}^\bullet(\log \mathcal{Y})$ consisting of relative holomorphic log-forms. We have a Leray filtration $L^\bullet \Omega_\mathcal{X}^\bullet(\log \mathcal{Y})$ given by 
$$ L^l \Omega_\mathcal{X}^\bullet(\log \mathcal{Y})=\pi^* \Omega_B^l\otimes \Omega_\mathcal{X}^{\bullet-l}(\log \mathcal{Y}). $$
It induces a filtration $L^lR^p \pi_* \Omega_\mathcal{X}^k$ on $R^p \pi_* \Omega_\mathcal{X}^k$. This filtered complex has then a spectral sequence 
$$ E_{r}^{l,q} \Rightarrow R^{l+q}\pi_* \Omega_\mathcal{X}^k(\log \mathcal{Y}). $$

Modifying Voisin's arguments \cite[Proposition 17.9]{V02} to the log-setting, one can prove the following.
\begin{prop}\label{prop:HolLerFil} Fixing $q$, the complex $(E_1^{l,q},d_1)$ is isomorphic to the complex $(\mathcal{H}^{k-l,q+l}\otimes\Omega_B^l,\bar{\nabla)}$ with $\bar{\nabla}$ being induced in the quotients by $\nabla$.
\end{prop}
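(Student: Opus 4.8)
The plan is to carry the Katz--Oda description of the Gauss--Manin connection into the relative logarithmic setting, following the projective case treated in \cite{V02}; the only genuinely new input is that $\mathcal{Y}\to B$ is smooth, which makes every relative log bundle below locally free and flat over $B$. I would first record the graded pieces of the Leray filtration. Because $\mathcal{Y}$ is a relative smooth divisor, there is a short exact sequence of locally free sheaves
\[
0\to \pi^*\Omega_B^1\to \Omega_\mathcal{X}^1(\log\mathcal{Y})\to \Omega_{\mathcal{X}/B}^1(\log\mathcal{Y})\to 0,
\]
which one checks in local coordinates adapted to $\mathcal{Y}=\{z_1=0\}$ and to the base, where $\Omega_\mathcal{X}^1(\log\mathcal{Y})$ is free on $\frac{dz_1}{z_1},dz_2,\dots,\pi^*dt_1,\dots,\pi^*dt_N$. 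Taking exterior powers yields the identification $\Gr_L^l\,\Omega_\mathcal{X}^k(\log\mathcal{Y})\cong \pi^*\Omega_B^l\otimes\Omega_{\mathcal{X}/B}^{k-l}(\log\mathcal{Y})$; this is the only place where the logarithmic hypothesis is used in a structural way, the rest of the argument being formal.

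Next I would compute the $E_1$ terms. The Leray filtration produces a spectral sequence with
\[
E_1^{l,q}=R^{l+q}\pi_*\bigl(\Gr_L^l\,\Omega_\mathcal{X}^k(\log\mathcal{Y})\bigr)
=\Omega_B^l\otimes R^{l+q}\pi_*\bigl(\Omega_{\mathcal{X}/B}^{k-l}(\log\mathcal{Y})\bigr),
\]
the second equality by the projection formula, since $\Omega_B^l$ is locally free. Here I would invoke relative logarithmic Hodge theory: the $E_1$-degeneration of the Hodge--de Rham spectral sequence of the open fibres together with base change (Deligne; Esnault--Viehweg \cite{Esnault-Viehweg}) identifies the coherent sheaf $R^{l+q}\pi_*\Omega_{\mathcal{X}/B}^{k-l}(\log\mathcal{Y})$ with the locally free Hodge bundle $\Gr_F^{k-l}\mathcal{H}^{k+q}=\mathcal{H}^{k-l,\,q+l}$. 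This gives $E_1^{l,q}\cong \mathcal{H}^{k-l,\,q+l}\otimes\Omega_B^l$, as asserted.

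It remains to identify the differential $d_1$ with $\bar\nabla$. The map $d_1$ is the connecting homomorphism attached to the sequence
\[
0\to \Gr_L^{l+1}\to L^l/L^{l+2}\to \Gr_L^l\to 0,
\]
and, being the connecting map of an extension of $\mathcal{O}_\mathcal{X}$-modules, it is $\mathcal{O}_B$-linear. It therefore equals contraction against the extension class of this sequence, which is induced by the Kodaira--Spencer class of the family. Thus $d_1$ contracts relative log forms against Kodaira--Spencer, which is precisely the $\mathcal{O}_B$-linear map $\bar\nabla$ that Gauss--Manin induces on $\Gr_F$ by Griffiths transversality. This identifies $(E_1^{\bullet,q},d_1)$ with $(\mathcal{H}^{k-\bullet,\,q+\bullet}\otimes\Omega_B^\bullet,\bar\nabla)$, matching the graded de Rham complex $\Gr_F^k DR(H^{k+q})$ of the previous subsection.

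The main obstacle is the second step: establishing that $R^{l+q}\pi_*\Omega_{\mathcal{X}/B}^{k-l}(\log\mathcal{Y})$ is a locally free Hodge bundle compatible with restriction to fibres. In the projective case this is Deligne's degeneration, but in the logarithmic case one must control the boundary divisor, and it is exactly here that the relative normal crossing hypothesis on $\mathcal{Y}\to B$ and the log Hodge theory of \cite{Esnault-Viehweg} are indispensable. A secondary point requiring care in the last step is that the residue terms of the logarithmic differential contribute nothing extra to the connecting map; this again follows from $\mathcal{Y}\to B$ being smooth, so that $d$ introduces no log poles in the base directions and the extension above is compatible with the residue sequence along $\mathcal{Y}$.
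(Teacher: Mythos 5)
Your argument is correct and is precisely the adaptation the paper has in mind: the paper gives no written proof, only the remark that one modifies Voisin's Proposition 17.9 of \cite{V02} to the logarithmic setting, and your three steps (the graded pieces of the Leray filtration via the log cotangent sequence, the identification of $R^{l+q}\pi_*\Omega_{\mathcal{X}/B}^{k-l}(\log\mathcal{Y})$ with $\mathcal{H}^{k-l,q+l}$ via log Hodge--de Rham degeneration, and the Katz--Oda identification of $d_1$ with $\bar\nabla$ as cup product with the Kodaira--Spencer extension class) are exactly that modification. No discrepancy with the paper's intended route.
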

\subsection{Infinitesimal invariant}\label{ss:InfInvVoi}
Let us consider a class 
$$ \alpha \in \Ker(H^p(\mathcal{X},\Omega_{\mathcal{X}}^k(\log \mathcal{Y})))\to H^0(B, R^p \pi_* \Omega_{\mathcal{X}/B}^k (\log \mathcal{Y}))$$
that vanishes on the fibers.

As an intermediate step, we can consider the image of $\alpha$ in $H^0(B,R^p\pi_*\Omega_{\mathcal{X}}^k (\log \mathcal{Y}) )$. By abusing notation, we denote this class also by $\alpha$. In the Leray filtration, we have that 
$$L^1 R^p\pi_* \Omega_{\mathcal{X}}^k(\log \mathcal{Y})=\Ker(R^p\pi_* \Omega_{\mathcal{X}}^k(\log \mathcal{Y}) \to R^p\pi_* \Omega_{\mathcal{X}/B}^k(\log \mathcal{Y})) $$
Therefore, the class $\alpha$ lives in $H^0(B, L^1 R^p\pi_* \Omega_{\mathcal{X}}^k(\log \mathcal{Y}))$ and admits a class in 
$$H^0(B, L^1 R^p\pi_* \Omega_{\mathcal{X}}^k(\log \mathcal{Y})/ L^2 R^p\pi_* \Omega_{\mathcal{X}}^k(\log \mathcal{Y})) $$
but 
$$L^1 R^p\pi_* \Omega_{\mathcal{X}}^k(\log \mathcal{Y})/ L^2 R^p\pi_* \Omega_{\mathcal{X}}^k(\log \mathcal{Y}) \cong E_{\infty}^{1,p-1}$$

By degree reasons, none of $d_r$, $r\geq 2$, can map to $E_r^{1,p-1}$. Therefore, we have that $E_\infty^{1,p-1}\subset E_{2}^{1,p-1}$.

Therefore the class of $\delta \alpha$ in $E_{2}^{1,p-1}$ is the first infinitesimal invariant.

\begin{rem} Under hypothesis such that  
$$ L^1 R^p\pi_* \Omega_{\mathcal{X}}^k(\log \mathcal{Y}) = L^{l_0} R^p\pi_* \Omega_{\mathcal{X}}^k(\log \mathcal{Y}) $$ for some $l_0$, we can define higher invariants living in $E_{\infty}^{l_0,p-l_0}$. See \cite[Section 17.2.2]{V02}
\end{rem}

\subsection{Example}
As a simple application, let us compute the analogue of Green's third infinitesimal invariant for a smooth log-CY 3-fold in the following setting. Let $X_0$ be a smooth projective 3 and $Y_0\in \abs{-K_X}$ a smooth anticanonical divisor. As deformations of the pair $(X_0,Y_0)$ are unobstructed \cite[Prop. 2.5]{AGG24}, there exists a universal family $(\mathcal{X},\mathcal{Y})\to B$ with the fiber over a base point $0\in B$ equal to $(X_0,Y_0)$. 

\begin{prop} Take $\alpha\in H^0(\Omega_B^1\otimes \Omega_{\mathcal{X}}^3)$ then it satisfies the conditions of Section \ref{ss:InfInvVoi} and it has associated a class $\delta \alpha \in E_2^{1,p-1}$ with $p=0$ and $k=4$. Suppose that $\dim B\geq 2$ then $\delta \alpha=0$.
\end{prop}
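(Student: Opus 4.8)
The plan is to prove the stronger statement that the whole group $E_2^{1,-1}$ vanishes, from which $\delta\alpha=0$ follows for any admissible $\alpha$. First I would record the degeneration forced by dimension: with $p=0$ and $k=4$ the relative sheaf $\Omega^4_{\mathcal{X}/B}(\log\mathcal{Y})$ is zero, since the fibres $U_t=X_t\setminus Y_t$ are threefolds. Hence the restriction map $H^0(\mathcal{X},\Omega^4_{\mathcal{X}}(\log\mathcal{Y}))\to H^0(B,R^0\pi_*\Omega^4_{\mathcal{X}/B}(\log\mathcal{Y}))$ has zero target, so $\alpha$ automatically satisfies the vanishing-on-fibres hypothesis of Section~\ref{ss:InfInvVoi} and its invariant is defined in $E_2^{1,p-1}=E_2^{1,-1}$.

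Next I would compute the relevant row of the $E_1$-page using Proposition~\ref{prop:HolLerFil}, which identifies $(E_1^{l,-1},d_1)$ with $(\mathcal{H}^{4-l,\,l-1}\otimes\Omega_B^l,\bar\nabla)$. Writing $\mathcal{H}^{a,b}=\Gr_F^a H^{a+b}(U_t)$, Deligne's description gives $\mathcal{H}^{3,0}=H^0(X_t,\Omega^3_{X_t}(\log Y_t))$ and $\mathcal{H}^{2,1}=H^1(X_t,\Omega^2_{X_t}(\log Y_t))$, while $\mathcal{H}^{4,-1}=\Gr_F^4 H^3(U_t)=0$. The log–Calabi–Yau condition $Y_t\in\abs{-K_{X_t}}$ gives $\Omega^3_{X_t}(\log Y_t)\cong K_{X_t}(Y_t)\cong\mathcal{O}_{X_t}$, so $\mathcal{H}^{3,0}=\pi_*\Omega^3_{\mathcal{X}/B}(\log\mathcal{Y})$ is a line bundle on $B$, trivialised by the relative logarithmic volume form $\omega$. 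Since $E_1^{0,-1}=\mathcal{H}^{4,-1}=0$ there is no incoming differential, so $E_2^{1,-1}=\ker\bigl(\bar\nabla\colon\mathcal{H}^{3,0}\otimes\Omega_B^1\to\mathcal{H}^{2,1}\otimes\Omega_B^2\bigr)$.

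The heart of the argument is then a Koszul-type computation. By Proposition~\ref{prop:HolLerFil} the differential $d_1=\bar\nabla$ is cup product with the Kodaira–Spencer class, so on the line $\mathcal{H}^{3,0}=\mathbb{C}\omega$ it is wedging by the infinitesimal variation class $\kappa:=\nabla\omega\in\mathcal{H}^{2,1}\otimes\Omega_B^1$: writing a local section as $\omega\otimes\beta$ with $\beta\in\Omega_B^1$, we have $\bar\nabla(\omega\otimes\beta)=\kappa\wedge\beta\in\mathcal{H}^{2,1}\otimes\Omega_B^2$. Decomposing $\kappa=\sum_{i=1}^{r}w_i\otimes\xi_i$ with $\{w_i\}\subset\mathcal{H}^{2,1}$ and $\{\xi_i\}\subset\Omega^1_{B,t}$ each linearly independent and $r=\rank\kappa$, one gets $\kappa\wedge\beta=\sum_i w_i\otimes(\xi_i\wedge\beta)$, which vanishes exactly when $\xi_i\wedge\beta=0$ for every $i$, that is when $\beta\in\bigcap_i\mathbb{C}\xi_i$. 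If $r\ge2$ the $\xi_i$ are independent and this intersection is $0$, so $\bar\nabla$ is injective and $E_2^{1,-1}=0$.

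Finally I would verify the rank hypothesis. The class $\kappa$ is precisely the derivative of the period map on the line $F^3$, namely the composite of the Kodaira–Spencer isomorphism $T_{B,t}\cong H^1(X_t,T_{X_t}(-\log Y_t))$ with contraction against $\omega$, landing in $\mathcal{H}^{2,1}=H^1(\Omega^2_{X_t}(\log Y_t))$. Infinitesimal Torelli for these pairs (as used in \cite{AGG24}) asserts that this map $T_{B,t}\to\mathcal{H}^{2,1}$ is injective; with $\dim B\ge2$ this forces $\rank\kappa=\dim B\ge2$, whence $E_2^{1,-1}=0$ and $\delta\alpha=0$. The main obstacle I anticipate is the clean identification of $d_1$ with cup product by the Kodaira–Spencer class in the logarithmic setting—exactly the content of Proposition~\ref{prop:HolLerFil}—together with the correct invocation of infinitesimal Torelli for the open threefold $U_t$; once these are in place the vanishing reduces to the elementary Koszul observation above. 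Note that the dimension hypothesis is essential: when $\dim B=1$ one has $\Omega_B^2=0$, hence $d_1=0$ and $E_2^{1,-1}=\mathcal{H}^{3,0}\otimes\Omega_B^1\neq0$, matching the classical picture in which this invariant is nontrivial over a one-dimensional base.
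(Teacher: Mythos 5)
Your proposal is correct and follows essentially the same route as the paper: identify $E_2^{1,-1}$ with $\ker\bigl(\bar\nabla\colon\Omega_B^1\otimes\mathcal{H}^{3,0}\to\Omega_B^2\otimes\mathcal{H}^{2,1}\bigr)$ via Proposition~\ref{prop:HolLerFil} and the vanishing of $E_1^{0,-1}$, then kill the kernel using infinitesimal Torelli together with $\dim B\geq 2$. The only difference is that you spell out the final step (the paper just says ``dualizing and using $\dim B\geq 2$'') as an explicit Koszul-rank argument on the class $\kappa=\nabla\omega$, which is a welcome clarification but not a different method.
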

\begin{proof}
We will prove in fact that $$E_2^{1,-1}\cong \ker\left (\Omega_B^1\otimes \mathcal{H}^{3,0}\to \Omega_B^2\otimes \mathcal{H}^{2,1} \right )=0.$$
Recall that, by definition and by Proposition \ref{prop:HolLerFil}, we have
$$E_1^{l,q}\cong \Omega_B^l\otimes R^{l+q}\pi_*\Omega^{k-l}_{\mathcal{X}/\mathcal{B}}. $$
Letting $l=1, q=-1, k=4$ gives 
$$E_1^{1,-1}\cong \Omega_B^1\otimes R^0\pi_*\Omega^3_{\mathcal{X}/\mathcal{B}}\cong \Omega_B^1\otimes \mathcal{H}^{3,0}.$$
Using the component of Gauss-Manin:
$$\bar{\nabla}:\Omega_B^1\otimes \mathcal{H}^{3,0} \to \Omega_B^2\otimes \mathcal{H}^{2,1}. $$
Hence
$$E_2^{1,-1}=\ker (\Omega_B^1\otimes \mathcal{H}^{3,0}\to \Omega_B^2\otimes \mathcal{H}^{2,1}), $$
because $E_1^{0,-1}=0$.

Now, infinitesimal Torelli gives an isomorphism:
$$T_B\overset{\sim}\to \Hom(\mathcal{H}^{3,0}, \mathcal{H}^{2,1}) $$
dualizing and using the fact that $\dim B\geq 2$, we obtain the result.
\end{proof}

\section{Log connectivity}\label{S:logConnect}
\subsection{Zariski theorem of Lefschetz type}
Let $X$ be a smooth projective variety of dimension $n+1$. For $L\to X$ an ample divisor let $S\subset \mathbb{P}(H^0(X,L))$ be the locus of smooth hyperplane sections and let $Y_s$ with $s\in S$.

The Lefschetz hyperplane Theorem tell us that for generic $s$ 
$$H^i(X,Y_s,\mathbb{Q})=0 \quad \text{for } i< n $$

Now, let us consider a general hyperplane $D$ (or a family of them $H_t$). We have a Zariski theorem of Lefschetz type that says

$$H^i(X\setminus D,Y_s\setminus D_s)=0 \quad \text{ for } i< n $$
where we denote $D\cap Y_s$ by $D_s$. We will give a Hodge theoretic proof of this.

\begin{thm}[Zariski thm of Lefschetz type]\label{thm:ZarLef}
Let $X$ be smooth projective of dimension $n+1$. Consider $Y,D$ hypersurfaces with $D$ SNC and $Y$ general and ample.
Consider $X^\circ=X\setminus H, Y^\circ=Y\setminus (H\cap Y)$. Then the map
$$H^i(X^\circ, \mathbb{Q})\to H^i(Y^\circ, \mathbb{Q}) $$
induced by the inclusion $i^\circ:Y^\circ \hookrightarrow X^\circ$
is an isomorphism for $i\leq n-1$ and injective for $i=n$
\end{thm}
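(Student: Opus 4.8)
The plan is to reformulate the statement as the vanishing of a relative cohomology and then compare the two open varieties stratum by stratum through Deligne's weight spectral sequence. From the long exact sequence of the pair $(X^\circ,Y^\circ)$, the asserted isomorphism for $i\le n-1$ together with injectivity for $i=n$ is equivalent to $H^i(X^\circ,Y^\circ,\mathbb{Q})=0$ for all $i\le n$. Because $Y$ is general and ample, Bertini together with Kleiman transversality ensures that $Y$ meets every closed stratum of $D$ transversally; hence $D\cap Y$ is again an SNC divisor in the smooth projective variety $Y$, and both $X^\circ=X\setminus D$ and $Y^\circ=Y\setminus(D\cap Y)$ are complements of SNC divisors. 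Consequently $(i^\circ)^*\colon H^i(X^\circ)\to H^i(Y^\circ)$ is a morphism of mixed Hodge structures, and by strictness with respect to the weight filtration it suffices to prove the corresponding isomorphism (resp.\ injectivity) on each weight-graded piece $\mathrm{Gr}^W_\bullet$.

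Write $D=\sum_\alpha D_\alpha$ and let $D^{(j)}$ be the disjoint union of the $j$-fold intersections $D_{\alpha_1}\cap\cdots\cap D_{\alpha_j}$, with $D^{(0)}=X$; each $D^{(j)}$ is smooth projective of dimension $n+1-j$. Deligne's weight spectral sequence has $E_1$-terms $H^{k-j}(D^{(j)},\mathbb{Q})(-j)$, with $d_1$ the alternating sum of Gysin maps, and degenerates at $E_2$, so that $\mathrm{Gr}^W_\bullet H^k(X^\circ)$ is the cohomology of this $E_1$-complex; the analogous statement holds for $Y^\circ$ with $D^{(j)}$ replaced by $D^{(j)}\cap Y$. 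The restriction $(i^\circ)^*$ induces a morphism between the two spectral sequences which on $E_1$ is, termwise, the pullback
\[
H^{k-j}(D^{(j)},\mathbb{Q})\longrightarrow H^{k-j}(D^{(j)}\cap Y,\mathbb{Q}).
\]

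The decisive input is then the classical Lefschetz hyperplane theorem on each stratum: since $Y$ is ample, $D^{(j)}\cap Y$ is a smooth ample divisor in the $(n+1-j)$-dimensional smooth projective variety $D^{(j)}$, so the displayed restriction is an isomorphism for $k-j\le n-1-j$, i.e.\ for $k\le n-1$, and injective for $k-j=n-j$, i.e.\ for $k=n$, uniformly in $j$. It remains to propagate this termwise control to the $E_2$-page, hence to $\mathrm{Gr}^W_\bullet$. For $k\le n-1$ the comparison map is an isomorphism on the term $H^{k-j}(D^{(j)})$ and on its neighbour $H^{k-j-2}(D^{(j+1)})$ (still inside the Lefschetz isomorphism range, as $\dim D^{(j+1)}=n-j$) and is injective on the neighbour $H^{k-j+2}(D^{(j-1)})$, which by a two-step diagram chase forces an isomorphism on $E_2$, and hence on $H^k(X^\circ)\to H^k(Y^\circ)$ by strictness. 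For $k=n$, injectivity on $E_2$ requires only that the map be injective on $H^{n-j}(D^{(j)})$ and an isomorphism on $H^{n-j-2}(D^{(j+1)})$; both hold by Lefschetz, and the outgoing Gysin differential plays no role in the chase, so no uncontrolled top-degree restriction appears.

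I expect the main obstacle to be precisely this propagation of injectivity at the top degree $i=n$: injectivity on the $E_1$-level does not by itself pass to cohomology, and one must observe that only the incoming Gysin differential enters the diagram chase, using the isomorphism on the higher-codimension neighbour to run it. A secondary, more routine point is the transversality bookkeeping guaranteeing that the general ample $Y$ keeps every stratum $D^{(j)}$ smooth and $D\cap Y$ an SNC divisor, so that the classical Lefschetz theorem applies legitimately to each $D^{(j)}$.
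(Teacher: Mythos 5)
Your argument is correct, but it takes a genuinely different route from the paper. The paper works with the Hodge filtration: it uses the $E_1$-degeneration of the logarithmic Hodge--de Rham spectral sequence to identify $\Gr_F^p H^i(X^\circ,\mathbb{C})$ with $H^{i-p}(X,\Omega_X^p(\log D))$, compares $X$ and $Y$ through the two short exact sequences $0\to \Omega_X^p(\log D)(-Y)\to \Omega_X^p(\log D)\to \Omega_X^p(\log D)|_Y\to 0$ and $0\to \Omega_Y^{p-1}(\log D)(-Y)\to \Omega_X^p(\log D)|_Y\to \Omega_Y^p(\log D)\to 0$, and concludes from the logarithmic Akizuki--Nakano vanishing of Esnault--Viehweg in the range $p+q<n+1$. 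You instead work with the weight filtration: you compare the two Gysin (weight) spectral sequences of the SNC complements stratum by stratum, apply the classical Lefschetz hyperplane theorem to each smooth ample divisor $D^{(j)}\cap Y\subset D^{(j)}$, and propagate to $E_2$ by a diagram chase, finishing by strictness of MHS morphisms for $W$. Your index bookkeeping is right (at the term $H^{k-j}(D^{(j)})$ the incoming Gysin neighbour $H^{k-j-2}(D^{(j+1)})$ stays in the isomorphism range even for $k=n$, the outgoing one only needs injectivity for $k\le n-1$ and is irrelevant for the top-degree injectivity chase), and your observation that injectivity on $E_2$ only requires surjectivity on the incoming term and injectivity at the spot is exactly the point that makes the argument close. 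What each approach buys: yours is more topological and self-contained, reducing everything to the classical Lefschetz theorem on smooth projective strata plus Deligne's degeneration, and it isolates cleanly why the boundary case $i=n$ works; the paper's approach is chosen because the same log-de Rham sequences and the Akizuki--Nakano/Saito vanishing reappear in the connectivity theorem later in the paper, and because it upgrades verbatim to the Hodge-module proof in the appendix that removes the SNC hypothesis on $D$ --- a hypothesis your stratification argument uses essentially.
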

\begin{proof}
It suffices to work over $\mathbb{C}$. We will prove the injectivity at the level of $\Gr_F^p$.

By degeneration of Hodge-de Rham spectral sequence at $E_1$ we have that 
$$\Gr_F^p H^i(X^\circ, \mathbb{C})\cong H^{i-p}(X,\Omega_X^p(\log D))$$
Hence, if we can show that 
\begin{equation}\label{eq:inj}
\Gr_F^p H^i(X^\circ, \mathbb{C})\hookrightarrow \Gr_F^p H^i(Y^\circ, \mathbb{C}) 
\end{equation}

we are done. We will follow the same strategy as the book of Griffiths-Harris for the proof of the Lefschetz hyperplane theorem.

Consider 
\begin{equation}\label{eq:rest}
\Omega_X^p(\log D)\overset{r}\to \Omega_X^p(\log D)|_Y\overset{i}\to \Omega_Y^p(\log D)
\end{equation}

To compute the restriction, we consider the ideal sequence defining $Y$ twisted by $\Omega_X^p(\log D)$
\begin{equation}\label{eq:TwIdeal}
0\to \Omega_X^p(\log D)(-Y)\to \Omega_X^p(\log D)\to \Omega_X^p(\log D)_Y\to 0
\end{equation}
From the wedges of the dual SES of the normal bundle and using that $N_{Y/X}\cong \Oo_Y(Y)$ and $\Omega_Y^{p-1}(\log D)(-Y)\cong \Omega_Y^{p-1}(\log D)(N_{Y/X}^*)$ , we have that 
\begin{equation}\label{eq:WedPow}
0\to \Omega_Y^{p-1}(\log D)(-Y)\to \Omega_X^p(\log D)|_{Y}\to \Omega_Y^p(\log D)\to 0
\end{equation}

Now, as $Y$ is ample, we have the following vanishings from the logarithmic Akizuki-Nakano Theorem \cite[Corollary 6.8]{Esnault-Viehweg} 
\begin{align*}
H^q(X,\Omega_X^p(\log D)(-Y))=0 \quad p+q <n+1 \\
H^q(Y,\Omega_Y^{p-1}(\log D)(-Y))=0 \quad p+q<n+1
\end{align*}
which gives the result by using the long-exact sequences associated to the sequences above.
\end{proof}

\subsection{Connectedness} 
Consider a trivial family 
$\mathcal{X}=X\times S\to S$
and a globally trivial divisor given by $D\subset X$
$$\mathcal{D}=D\times S \to S. $$
Define $\mathcal{X}^\circ:=\mathcal{X}\setminus \mathcal{D}=D$. We have a family $\mathcal{X}^\circ\to S. $

Consider the universal family of smooth hypersurfaces intersecting $D$ transversely given by the sections of $L\to X$: $$\mathcal{Y}\to S,$$
denote the restriction $\mathcal{D}$ to $\mathcal{Y}$ by $\mathcal{D}_{\mathcal{Y}}$ and let $\mathcal{Y}^\circ:=\mathcal{Y}\setminus \mathcal{D}_{\mathcal{Y}}$.
Then the Leray spectral sequence and Zariski's theorem \ref{thm:ZarLef} imply that the map
$$H^i(\mathcal{X}^\circ, \mathbb{Q})\to H^i(\mathcal{Y}^\circ,\mathbb{Q}) $$
is an isomorphism for $i<n$ and injective for $i=n$.

\begin{thm} Let $X=\mathbb{P}^{n+1}$, $L=\Oo_{\mathbb{P}^{n+1}}(d)$, $S\subset \mathbb{P}(H^0(X,L))$ the locus of smooth hyperplane sections intersecting $D$ transversely. Suppose that $D=\sum_{i=1}^e D_i$ with $\deg D_i=d_i$. Denote $\delta_{\min}=\min\{d,d_1,\ldots, d_e\}$. If $\delta_{\min}$ is big enough, we have that 
$$H^i(\mathcal{X}^\circ, \mathbb{Q})\to H^i(\mathcal{Y}^\circ,\mathbb{Q}) $$
is an isomorphism for $i<2n$ and injective for $i=2n$.
\end{thm}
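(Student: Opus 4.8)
The plan is to run Nori's method \cite{N93} in the logarithmic setting: use the Leray spectral sequence of the family $\pi\colon\mathcal{Y}^\circ\to S$ together with the fibrewise Zariski--Lefschetz isomorphism of Theorem~\ref{thm:ZarLef} to isolate a single ``new'' local system on $S$, and then kill its low-degree cohomology by the infinitesimal computations furnished by the generalized Jacobian rings of \cite{AS06}. The weak range $i<n$ is already the content of the preceding subsection (Leray plus Theorem~\ref{thm:ZarLef}); the whole point here is the doubling of the range to $2n$, which is the genuinely Hodge-theoretic statement.

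First I would compare the Leray spectral sequences of $\pi\colon\mathcal{Y}^\circ\to S$ and of the trivial family $\mathcal{X}^\circ=X^\circ\times S\to S$, where $X^\circ=X\setminus D$. Since each fibre $Y_s^\circ$ is a smooth affine-type open hypersurface of dimension $n$, Artin vanishing gives $R^q\pi_*\mathbb{Q}=0$ for $q>n$, so only the rows $0\le q\le n$ occur. For the trivial family $R^q\pi_*\mathbb{Q}=H^q(X^\circ,\mathbb{Q})$ is constant, and Theorem~\ref{thm:ZarLef} shows the comparison map $H^q(X^\circ)\to H^q(Y_s^\circ)$ is an isomorphism for $q\le n-1$ and injective for $q=n$. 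Hence for the family $R^q\pi_*\mathbb{Q}$ is the constant sheaf $H^q(X^\circ)$ for $q<n$, and for $q=n$ we obtain a short exact sequence of admissible variations of mixed Hodge structure
$$ 0\to H^n(X^\circ)\otimes\mathbb{Q}_S \to R^n\pi_*\mathbb{Q} \to \mathcal{H}^n_{\mathrm{van}}\to 0, $$
defining the vanishing local system $\mathcal{H}^n_{\mathrm{van}}$, the only non-constant piece in the Leray page. Comparing the two $E_2$-pages $H^p(S,R^q\pi_*\mathbb{Q})$, the map $H^i(\mathcal{X}^\circ)\to H^i(\mathcal{Y}^\circ)$ is an isomorphism on every constant row, while the discrepancy is concentrated at position $(p,q)=(i-n,n)$ and measured by $H^{\,i-n}(S,\mathcal{H}^n_{\mathrm{van}})$. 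A routine comparison of spectral sequences (the $q=n$ differentials into and out of the constant rows match, and the new terms can only augment $H^{\ge n}$) then reduces the theorem to the single vanishing
$$ H^p\bigl(S,\mathcal{H}^n_{\mathrm{van}}\bigr)=0\qquad\text{for }p<n, $$
with injectivity at total degree $2n$ coming from the same computation at $p=n$.

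The remaining vanishing is the infinitesimal input. I would compute $H^p(S,\mathcal{H}^n_{\mathrm{van}}\otimes\mathbb{C})$ as the hypercohomology of the holomorphic de Rham complex $\DR(\mathcal{H}^n_{\mathrm{van}})$ of the Gauss--Manin connection, filtered by $F$ via Griffiths transversality exactly as in Proposition~\ref{prop:HolLerFil} and in \cite[Ch.~17]{V02}. On the associated graded the differential $\bar\nabla$ becomes the infinitesimal variation of Hodge structure, i.e.\ cup product with the Kodaira--Spencer class. Here the theory of \cite{AS06} enters: it identifies the Hodge bundles $\Gr_F\mathcal{H}^n(Y_s^\circ)_{\mathrm{van}}$ with graded pieces of the generalized Jacobian ring of the open complete intersection, and identifies $\bar\nabla$ with multiplication by the degree of the deformation. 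The vanishing of $H^p(S,\mathcal{H}^n_{\mathrm{van}})$ for $p<n$ then becomes the exactness, in the relevant band of degrees, of a Koszul-type complex assembled from the multiplication maps of this Jacobian ring --- the open analogue of Nori's symmetrizer lemma.

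The main obstacle is precisely this last step: establishing that the generalized Jacobian ring of \cite{AS06} enjoys the Koszul-exactness/symmetrizer property forcing $H^p(S,\mathcal{H}^n_{\mathrm{van}})=0$ for $p<n$, and that it does so once $\delta_{\min}$ is large. This is where the hypothesis ``degrees sufficiently large'' is consumed: one needs the graded pieces of the de Rham complex supplied by \cite{AS06} to vanish in a band whose width grows with $\delta_{\min}$, so that the Koszul differentials are exact through total degree $n-1$. A secondary but necessary point is the compatibility of all the mixed Hodge structures in play --- the Leray filtration, the admissibility of $\mathcal{H}^n_{\mathrm{van}}$, and the strictness needed for the spectral-sequence comparison --- which, when $D$ is only assumed SNC, is most cleanly handled through Saito's mixed Hodge modules, as already noted after Theorem~\ref{thm:ZarLef}.
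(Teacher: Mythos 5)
Your overall strategy --- Leray over $S$, the fibrewise Zariski--Lefschetz theorem, and the Asakura--Saito exactness of the graded Gauss--Manin complexes as the open analogue of Nori's symmetrizer lemma --- is the right circle of ideas and is indeed what the paper uses. But your reduction step has a genuine gap. You claim the discrepancy between the two topological Leray spectral sequences is concentrated at the single position $(i-n,n)$, so that the theorem reduces to $H^p(S,\mathcal{H}^n_{\mathrm{van}})=0$ for $p<n$. This is false: $X^\circ=\mathbb{P}^{n+1}\setminus D$ is an affine $(n+1)$-fold with $H^{n+1}(X^\circ,\mathbb{Q})\neq 0$ in the regime at hand (already $\Gr_F^{n+1}H^{n+1}(X^\circ)=H^0(\mathbb{P}^{n+1},\Omega^{n+1}_{\mathbb{P}^{n+1}}(\log D))\neq 0$ once $\sum_i d_i\geq n+2$), whereas $R^{q}\pi_*\mathbb{Q}=0$ for $q>n$ by Artin vanishing on the affine fibres. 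So the trivial family has an entire extra row $q=n+1$ contributing to $H^i(\mathcal{X}^\circ)$ for $n+1\leq i\leq 2n$ with no counterpart on the $E_\infty$-page for $\mathcal{Y}^\circ$; the restriction map is therefore certainly not an isomorphism on Leray graded pieces, and a row-by-row comparison cannot yield the statement. This is exactly the point where Nori's method departs from a naive Leray argument: the paper instead compares the Leray-filtered coherent complexes $\Omega^p_{S\times\mathbb{P}^{n+1}}(\log\mathcal{D})$ and $\Omega^p_{\mathcal{Y}}(\log\mathcal{D}_{\mathcal{Y}})$ directly (Proposition \ref{prop:Leray}), obtaining an isomorphism $R^qp_*\to R^q\pi_*$ in the range $p<n$, $p+q<2n$ --- note the constraint is on the Hodge index $p$, not on the fibre degree --- so the extra row is absorbed before one ever passes to $\mathbb{Q}$-coefficients.

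Second, even granting your reduction, the vanishing $H^p(S,\mathcal{H}^n_{\mathrm{van}}\otimes\mathbb{C})=0$ for $p<n$ is not what Theorem \ref{thm:AS} delivers. The Asakura--Saito input is that $\Gr_F^p(\DR(H_{\mathbb{C},\prim}^n))$ is exact in degrees $k<\min(p,n)$; feeding this into the filtered de Rham hypercohomology only kills $\Gr_F^p\mathbb{H}^j$ for $p>j$, i.e.\ it constrains the Hodge filtration of the relevant cohomology, and says nothing about the graded pieces with $p\leq j$, so it cannot produce the full local-system vanishing you assert. Passing from this partial $F$-information to a statement over $\mathbb{Q}$ requires the final step of Nori's argument (Hodge/weight symmetry applied to the mixed Hodge structure on the relative cohomology of $(\mathcal{X}^\circ,\mathcal{Y}^\circ)$), which your proposal omits. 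The two missing ingredients are thus: the correctly formulated coherent comparison in the range $p<n$, $p+q<2n$ replacing the topological row-by-row reduction, and the conjugation argument upgrading the $\Gr_F$-statement to $\mathbb{Q}$-coefficients.
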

\begin{proof}
We will use the following reduction steps

\begin{prop} With the above hypothesis, the restriction map
$$H^q(S\times \mathbb{P}^{n+1}, \Omega_{S\times \mathbb{P}^{n+1}}^p(\log \mathcal{D}))\to H^q(\mathcal{Y},\Omega_{\mathcal{Y}}^p (\log \mathcal{D}_{\mathcal{Y}})) $$
is an isomorphism for $p+q<2n$, $p<n$, and it is injective for $p+q\leq 2n$, $p\leq n$.
\end{prop}

Let us denote by $p:\mathbb{P}^{n+1}\times S\to S$ and $\pi:\mathcal{Y}\to S$.
The proposition is proven by using the following
\begin{prop}\label{prop:Leray} With the above hypothesis, 
$$R^q p_*\Omega_{\mathbb{P}^{n+1}\times S}^p(\log \mathcal{D})\to R^q\pi_*\Omega_{\mathcal{Y}}^p(\log \mathcal{D}_{\mathcal{Y}}) $$
is an isomorphism for $p+q<2n$, $p<n$, and it is injective for $p+q\leq 2n$, $p\leq n$.
\end{prop}
\begin{proof}
Consider the VMHS given by $\pi^\circ:\mathcal{Y}^\circ\to S$ with local system $H_{\mathbb{C},\prim}^n=R^n(\pi^\circ)_*\mathbb{C}_{\prim}, $ where 
$$ H^*(Y_s\setminus D_s)_{\prim}:=\ker(H^*(Y_s\setminus D_s) \to H^{*+2}(X\setminus D))$$
in degree $\leq 2n$.

The Hodge bundles $\mathscr{H}^{p,q}=F^p \mathscr{H}^n/ F^{p+1}\mathscr{H^n}$ where $\mathscr{H}^n
=H_{\mathbb{C},\prim}^n\otimes \Oo_S$ have the Gauss-Manin connection $\nabla$. Consider 
$$\Gr_F^p(\DR(H_{\mathbb{C},\prim}^n)):=0\to \mathscr{H}^{p,q}\overset{\nabla}\to \mathscr{H}^{p-1,q+1}\otimes \Omega_S^1\overset{\nabla}\to \mathscr{H}^{p-2,q+2}\otimes \Omega_S^2\to \ldots $$
What makes everything work in the open case is:
\begin{thm}[Asakura-Saito] \label{thm:AS} Suppose that $D=\sum_{i=1}^e D_i$ with $\deg D_i=d_i$. Denote $\delta_{\min}=\min\{d,d_1,\ldots, d_e\}$. If $\delta_{\min}$ is big enough, the complexes $\Gr_F^p(\DR(H_{\mathbb{C},\prim}^n))$ are exact in degree $k<\min(p,n)$.
\end{thm}
\begin{proof}
This follows from Theorem 9.1 (iv) of \cite{AS06}, because in this case $c=c_S(\mathcal{X},\mathcal{D})=0$, by hypothesis $\delta_{\min}\gg 0$.
\end{proof}
We continue with the proof of Proposition \ref{prop:Leray}. Consider the Leray filtrations $L$ for $\Omega_{\mathbb{P}^{n+1}\times S}^p(\log \mathcal{D})$ and $\Omega_{\mathcal{Y}}^p(\log \mathcal{Y}_D)$:
\begin{align*}
& L^r \Omega_{\mathbb{P}^{n+1}\times S}^p(\log \mathcal{D})= p^* \Omega_{S}^r \wedge \Omega_{\mathbb{P}^{n+1}\times S}^{p-r}(\log \mathcal{D})  \\
& L^r \Omega_{\mathcal{Y}}^p(\log \mathcal{Y}_D)= \pi^* \Omega_S^r \wedge \Omega_{\mathcal{Y}}^{p-r}(\log \mathcal{Y}_D).
\end{align*}
The restriction morphism $R^q p_*\Omega_{\mathbb{P}^{n+1}\times S}^p(\log \mathcal{D})\to R^q\pi_*\Omega_{\mathcal{Y}}^p(\log \mathcal{D}_{\mathcal{Y}}) $ is compatible with the Leray filtrations. Therefore, we have to show that the restriction induces an isomorphism on the second page $E_2$ of spectral sequences, for all bidegrees $(r,u)$ with $u+r+p<2n$, $u+r<n$, and an injection for $u+r+p\leq 2n$, $u+r\leq n$.

For the first sheaf, we have that $E_1^{r,u}$ is given by 
$$\Omega_S^r\otimes H^{u+r}(\mathbb{P}^{n+1}, \Omega_{\mathbb{P}^{n+1}}^{p-r}(\log D)), $$
with differential $d_1 = \bar{\nabla}$ induces by the Gauss-Manin connection. For the second sheaf, using proposition \ref{prop:HolLerFil}, we have that $E_1^{r,u}$ can be identified with $\Omega_S^r\otimes \mathcal{H}^{p-r,u+r}$, again $d_1$ identifies with the Gauss-Manin connection.

We have a direct sum decomposition
$$H^*(Y_s\setminus D_s,\mathbb{C})=H^*(\mathbb{P}^{n+1}\setminus D,\mathbb{C})\oplus H^*(Y_s\setminus D_s,\mathbb{C})_{\prim} $$
where 
$$ H^*(Y_s\setminus D_s)_{\prim}:=\ker(H^*(Y_s\setminus D_s) \to H^{*+2}(X\setminus D))$$
in degree $\leq 2n$. We conclude that, for $u+p\leq 2n$, we have extensions 
$$0\to \Omega_S^{r}\otimes H^{u+r}(\mathbb{P}^{n+1},\Omega_{\mathbb{P}^{n+1}}^{p-r}(\log D))\to \Omega_S^r\otimes \mathcal{H}^{p-r,u+r}\to \Omega_S^r\otimes \mathscr{H}^{p-r,u+r} \to 0  $$
where every complex has its corresponding $\bar{\nabla}$ as differential.

Now, by Theorem \ref{thm:AS}, the complexes most to the right $(\Gr_F^p(\DR(H_{\mathbb{C},\prim}^n), \bar{\nabla})$ are exact in degree $k<\min(n,p)$, for $p+r=n$ and in all degree for $p+r\not= n$ it shows that, for $p+u\leq 2n$, the complex 
has cohomology $(\Omega_S^r\otimes \mathcal{H}^{p-r,u+r},\nabla)$
$$\Omega_S^r\otimes H^{u+r}(\mathbb{P}^{n+1}, \Omega_{\mathbb{P}^{n+1}}^{p-r}(\log D)), $$ in degree $r<\min(n,p)$, for $p+u=n$, and in all degree for $p+u\not = n$.

Therefore, for $p+u\leq 2n$, the restriction map induces an isomorphism for the page $E_2$ of the Leray spectral sequences of $\Omega_{S\times \mathbb{P}^{n+1}}(p)(\log \mathcal{D})$ and $\Omega_{\mathcal{Y}}^p(\log \mathcal{D}_{\mathcal{Y}})$ in degrees $(r,u)$ such that $r<\min(n,p)$ and $p+u\not = n$.
\end{proof}
\end{proof}

\subsection{Appendix: Zariski theorem by Hodge modules}

Now, we can replace the SNC condition by using Hodge modules
\begin{thm}[Zariski thm of Lefschetz type]
Let $X$ be smooth projective of dimension $n+1$. Let $D\subset X$ be a reduced hypersurface and let $Y\subset X$ be a general ample hypersurface which meets $D$ transversely. Set
 $$X^\circ=X\setminus D, \quad Y^\circ=Y\setminus (D\cap Y).$$ 
Then the restrinction map
$$Y^{\circ, *}:H^i(X^\circ, \mathbb{Q})\to H^i(Y^\circ, \mathbb{Q}) $$
induced by the inclusion $i^\circ:Y^\circ \hookrightarrow X^\circ$
is an isomorphism for $i\leq n-1$ and injective for $i=n$
\end{thm}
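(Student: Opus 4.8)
The plan is to run the proof of Theorem~\ref{thm:ZarLef} verbatim, replacing the logarithmic de Rham complex $\Omega_X^\bullet(\log D)$---which is only available when $D$ is SNC---by the filtered de Rham complex of the mixed Hodge module $M:=j_*\QQ^H_{X^\circ}$, where $j:X^\circ=X\setminus D\hookrightarrow X$ is the open inclusion and $\QQ^H_{X^\circ}$ is the constant Hodge module. By Saito's theory the Hodge--de Rham spectral sequence attached to $M$ degenerates at $E_1$, so that
\[
\Gr_F^p H^i(X^\circ,\mathbb{C})\cong \mathbb{H}^{i}\bigl(X,\Gr^F_p\DR(M)\bigr),
\]
where $\Gr^F_p\DR(M)$ is a bounded complex of coherent sheaves on $X$ which, in the SNC case, reduces to the single sheaf $\Omega_X^p(\log D)$ (suitably placed in degree). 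As before, it suffices to establish injectivity of the restriction map at the level of each $\Gr_F^p$.

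First I would set up the restriction to $Y$. Since $Y$ is general and ample it meets $D$ transversely, hence $Y$ is non-characteristic for the filtered $\mathcal{D}$-module underlying $M$. By Saito's non-characteristic restriction theorem the shifted naive restriction of $M$ to $Y$ is, with its induced filtration, the mixed Hodge module $j_{Y*}\QQ^H_{Y^\circ}$ attached to $j_Y:Y^\circ=Y\setminus(D\cap Y)\hookrightarrow Y$, and the filtered de Rham functor commutes with this restriction. Consequently $\Gr_F^p H^i(Y^\circ,\mathbb{C})$ is computed by $\Gr^F_p\DR$ of $M|_Y$, and the restriction map on $\Gr_F^p$ is induced by the sheaf-level restriction $\Gr^F_p\DR(M)\to \Gr^F_p\DR(M)|_Y$, exactly as the morphisms $r$ and $i$ in~(\ref{eq:rest}).

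The engine of the argument is then the twisted ideal sequence
\[
0\to \Gr^F_p\DR(M)\otimes \Oo_X(-Y)\to \Gr^F_p\DR(M)\to \Gr^F_p\DR(M)|_Y\to 0,
\]
which is exact because $\otimes\,\Oo_X(-Y)$ is exact and $Y$ is non-characteristic; this plays the role of~(\ref{eq:TwIdeal}) together with the wedge-power sequence~(\ref{eq:WedPow}). Passing to hypercohomology, the desired injectivity for $i\le n$ and bijectivity for $i\le n-1$ follow once one knows that the twisted graded pieces satisfy
\[
\mathbb{H}^q\bigl(X,\Gr^F_p\DR(M)\otimes \Oo_X(-Y)\bigr)=0
\]
in the same numerical range that the logarithmic Akizuki--Nakano vanishing \cite[Corollary~6.8]{Esnault-Viehweg} provided in Theorem~\ref{thm:ZarLef}. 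Since $\Oo_X(Y)$ is ample, this is precisely the content of Saito's generalization of the Akizuki--Nakano vanishing theorem to Hodge modules; because $M$ is mixed rather than pure, one first reduces to the pure case using the weight filtration, whose graded quotients are polarizable pure Hodge modules to which Saito vanishing applies directly.

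The main obstacle I anticipate is bookkeeping rather than conceptual. One must pin down the normalization of the Hodge filtration (Saito's increasing $F_\bullet$ on the $\mathcal{D}$-module versus the decreasing $F^\bullet$ on cohomology, together with the Tate twists and weight shifts introduced by $j_*$) so that the range in Saito's vanishing theorem matches the range $p+q<n+1$ needed here, and one must verify that non-characteristic restriction is compatible with $\Gr^F\DR$ on the nose, not merely up to a filtered quasi-isomorphism that could shift cohomological degrees. Once these compatibilities are fixed, the long exact sequences in hypercohomology reproduce the SNC argument word for word, yielding the stated isomorphism for $i\le n-1$ and injectivity for $i=n$.
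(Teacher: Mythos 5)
Your proposal follows essentially the same route as the paper's own proof: both replace $\Omega_X^\bullet(\log D)$ by the filtered de Rham complex of $j_*\mathbb{Q}^H_{X^\circ}$, use Saito's $E_1$-degeneration to reduce to $\Gr_F^p$, invoke non-characteristic restriction to $Y$ (justified by transversality of $Y$ and $D$) to identify the restricted module with $j'_*\mathbb{Q}^H_{Y^\circ}$, and conclude via the twisted ideal triangle, the analogue of the wedge-power sequence, and Saito's vanishing theorem for the ample bundle $\Oo_X(Y)$. The bookkeeping issues you flag (filtration conventions, the cohomological shift in $Li^*\DR_X(\mathcal{M})\cong\DR_Y(i^*\mathcal{M})[-1]$, and the reduction of Saito vanishing to the pure graded pieces of the weight filtration) are exactly the points the paper's proof also has to, and does, address.
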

\begin{proof}
It suffices to work over $\mathbb{C}$. We will prove the injectivity at the level of $\Gr_F^p$.
Let $j:X^\circ \to X$ and $j':Y^\circ \to Y$ be the inclusions. Consider the polarizable Hodge modules 
$$\mathcal{M}:=j_* \mathcal{Q}_{X^\circ}^H[n+1] \text{ on }X, \quad \mathcal{M}':=j'_* \mathcal{Q}_{Y^\circ}^H[n] \text{ on }Y.$$ 

By Saito degeneration of Hodge-de Rham spectral sequence associated to $\DR(\mathcal{M})$  at $E_1$ we have that 
$$\Gr_F^p H^i(X^\circ, \mathbb{C} ) \cong \mathbb{H}^i(X, \Gr_F^p \text{DR}(\mathcal{M}))$$
and likewise for $Y^\circ$ and $\mathcal{M}'$ (see Saito \cite{Saito-MHM} and Schnell \cite{Schnell-Saito}).

Hence, if we can show that 
\begin{equation}\label{eq:injHM}
\Gr_F^p H^i(X^\circ, \mathbb{C})\hookrightarrow \Gr_F^p H^i(Y^\circ, \mathbb{C}),
\end{equation}
 we are done.

Consider the maps of Hodge modules
\begin{equation}\label{eq:restHM}
\mathcal{M}\to i_+(i^*\mathcal{M})\to i_+(\mathcal{M}')
\end{equation}
(Analogue to 
$$\Omega_X^p(\log D)\overset{r}\to \Omega_X^p(\log D)|_Y\overset{i}\to \Omega_Y^p(\log D)
 $$)

To compute the restriction, we consider the distinguished triangle in the derived category of mixed Hodge modules on $X$:
\begin{equation}\label{eq:TwIdealHM}
\mathcal{M}\otimes \Oo_X(-Y)\to \mathcal{M}\to i_+(i^*\mathcal{M})\overset{[+1]}\to .
\end{equation}

Since $Y$ is chosen general and meets $H$ transversely, the immersion $i:Y\hookrightarrow X$ is non-characteristic for the filtered $\mathcal{D}$-module underlying $\mathcal{M}$. Therefore, the filtered inverse image is well-behaved and one has a codimension-one relation for the de Rham complex  (see Popa's exposition \cite{Popa-notes}):
$$Li^*\DR_X(\mathcal{M})\cong \DR_Y(i^*\mathcal{M})[-1]. $$
This permits us to pass to $Y$. Taking $\Gr_F$ we obtain the exact triangle 

\begin{equation}\label{eq:WedPowMHM}
\Gr_{F}^{p-1}\text{DR}_Y(\mathcal{M}'(-Y))[-2] \to \Gr_F^p(\DR_Y(i^*\mathcal{M})[-1]) \to \Gr_F^p \text{DR}(\mathcal{M}')\overset{[+1]} \to  
\end{equation}

(Analogue to
$$0\to \Omega_Y^{p-1}(\log D)(-Y)\to \Omega_X^p(\log D)|_{Y}\to \Omega_Y^p(\log D)\to 0$$)

Now, by Saito vanishing, we have that as $\Oo_X(Y)$ is ample
$$\mathbb{H}^k(X, \Gr_F^p \text{DR}(\mathcal{M}) \otimes \mathcal{O}_X(-Y))=0 \quad \text{for } k< 0$$
$$\mathbb{H}^k(Y, \Gr_F^p \text{DR}(\mathcal{M}' \otimes \mathcal{O}_Y(-Y))) = 0 \quad \text{for } k < 0$$

which gives the result by using the long-exact sequences associated to the above distinguished triangles.

 \end{proof}

\section{Hodge Loci}\label{S:RelNL}
Let $Y\subset \mathbb{P}^n$ be a smooth hypersurface of degree $e$. Let $S\subset H^0(\Oo_{\mathbb{P}^n}(d))$ be the locus of smooth hypersurfaces intersecting transversally $Y$. For every $s\in S$ denote $Z_s:=X_s\cap Y$.

Consider the universal family $\mathcal{X}\to S$ and denote $\mathcal{X}^\circ:=\mathcal{X}\setminus Y\times S$.

Consider the VMHS given by $\pi^\circ:\mathcal{X}^\circ\to S$. We denote by $\mathcal{H}_{\text{prim}}^{n-1}$ the local system of primitive cohomology:$$\mathcal{H}_{\text{prim}}^{n-1} \subset R^{n-1}(\pi^\circ)_*\mathbb{C}.$$
Here, $$H^{n-1}(U)_{\text{prim}} := \text{Coker}(H^{n-1}(\mathbb{P}^n \setminus Y) \to H^{n-1}(X \setminus Y)).$$ 
We define the Hodge loci $S_\lambda^p$ by 
$$S_\lambda^p:=\{s\in S\mid \lambda_s\in F^p \mathcal{H}_{\text{prim},s}\}. $$

\subsection{Generalized Jacobian Rings}
Here we give a brief overview of the main results of \cite{AS06} that we will need.

Let $X=\{F=0\}, Y=\{G=0\}\subset \mathbb{P}^n$ with $\deg F=d, \deg G=e$. Denote by $\delta_{\min}=\min\{d,e\}$. Let $P=\mathbb{C}[X_0,\ldots, X_n]$ and $A=P [\mu, \nu]$. Let $P^l\subset P$ denote the subspace of homogeneous polynomials of degree $l$.

For $q, l\in \mathbb{Z}$, write:

$$ A_q(l)=\oplus_{a+b=q}P^{ad+be+l}\mu^a\nu^b, \quad A_q(l)=0 \text{ if } q<0. $$

The Jacobian ideal $J(F,G)$ is the ideal of $A$ generated by 
$$ \frac{\partial F}{\partial X_k}\mu +\frac{\partial G}{\partial X_k}\nu \quad 0\leq k\leq n $$

The quotient ring $B=B(F,G)=B(X,Z)=A/J(F,G)$ is called the Jacobian ring of the pair $(X,Z)$. We denote
$$B_q(l)=B_q(l)(F,G)=A_q(l)/A_q(l)\cap J(F,G). $$

we always assume that $X$ and $Y$ intersect transversely.

\begin{thm}[Isomorphism Theorem]\label{thm:AS-I}
Suppose $n\geq 2$. 
\begin{enumerate}
    \item For integers $0 \leq q \leq n-1$ and $l \geq 0$, there is a natural isomorphism
    $$ \varphi: B_q(d+e-n-1+l) \xrightarrow{\sim} H^q(X, \Omega_{X}^{n-1-q}(\log Z)(l))_{\prim}. $$
    
    \item Under this isomorphism, the infinitesimal variation of Hodge structure (Gauss-Manin connection)
    $$ \bar{\nabla}: H^q(X, \Omega_{X}^{n-1-q}(\log Z)(l))_{\prim} \otimes T_S \longrightarrow H^{q+1}(X, \Omega_{X}^{n-q-2}(\log Z)(l))_{\prim} $$
    is identified with the multiplication map in the Jacobian ring:
    $$ B_q(d+e-n-1+l) \otimes B_1(0) \longrightarrow B_{q+1}(d+e-n-1+l). $$
\end{enumerate}
\end{thm}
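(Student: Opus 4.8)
The plan is to recognize this as the logarithmic and pencil-theoretic generalization of Griffiths' residue description of the Hodge structure of a smooth hypersurface together with the Carlson--Griffiths computation of its infinitesimal variation; in practice I would cite it directly from \cite{AS06} exactly as Theorem~\ref{thm:AS} was deduced above, but let me outline the route one takes to establish it. First I would realize the target cohomologically. By Deligne's theory of the complement, the Hodge filtration on $H^{n-1}(X\setminus Z,\mathbb{C})$ is computed by the logarithmic de Rham complex $\Omega_X^\bullet(\log Z)$, whose graded pieces are
$$\mathrm{Gr}_F^{\,n-1-q} H^{n-1}(X\setminus Z)\cong H^q(X,\Omega_X^{\,n-1-q}(\log Z)),$$
and the primitive part is precisely the cokernel of the pullback from $\mathbb{P}^n\setminus Y$ appearing in the definition of $H^{n-1}(U)_{\prim}$ above; the twist by $\Oo_X(l)$ accounts for the extra $l$ in the statement.

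Next I would bring in the algebraic side via the Cayley/pencil construction encoded in the generators $\frac{\partial F}{\partial X_k}\mu+\frac{\partial G}{\partial X_k}\nu$ of $J(F,G)$: these are the partials of $\mu F+\nu G$, so $B(F,G)$ is the relative Jacobian ring of the pencil $\{\mu F+\nu G=0\}_{[\mu:\nu]\in\mathbb{P}^1}$. The idea is to present classes in $H^q(X,\Omega_X^{\,n-1-q}(\log Z)(l))_{\prim}$ as residues of rational $n$-forms on $\mathbb{P}^n$, bigraded by the monomial $\mu^a\nu^b$ with $a+b=q$ recording the pole contributions along $X$ and the logarithmic contributions along $Y$; the numerator is then a homogeneous polynomial $P\in P^{ad+be+l}$, which is exactly the data assembled in $A_q(d+e-n-1+l)$. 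Transversality of $X$ and $Y$ guarantees that the combined sequence of partials is regular, so the associated Koszul complex resolves and its cohomology identifies the pole-order graded pieces with the quotients $B_q$, producing the isomorphism $\varphi$ of part (1).

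For part (2), I would differentiate the residue presentation along a tangent direction $\dot F\in P^d$ to $S$. Since $Y$ (hence $G$) is held fixed, the deformation is $F\mapsto F+t\dot F$, and the classical Griffiths transversality argument shows that this changes a residue by multiplying its numerator by $\dot F$ modulo lower pole order. On graded pieces this is exactly multiplication by the class $[\dot F\,\mu]\in B_1(0)$, so matching it with the Gauss--Manin operator $\bar\nabla$ yields the identification of the infinitesimal variation of Hodge structure with the Jacobian multiplication $B_q(d+e-n-1+l)\otimes B_1(0)\to B_{q+1}(d+e-n-1+l)$.

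The main obstacle is the middle step: the careful residue and pole-order calculus in the logarithmic setting, and the verification that the Koszul cohomology of the pencil reproduces precisely $B_q(d+e-n-1+l)$ in the stated cohomological degrees, including the bookkeeping that isolates the primitive part from the contribution of $\mathbb{P}^n\setminus Y$. This is where the transversality hypothesis (regularity of the combined Jacobian sequence) and the normalization of the internal degree $d+e-n-1$ genuinely enter, and it constitutes the technical heart of \cite{AS06}.
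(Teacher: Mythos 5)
The paper gives no proof of this theorem: it is quoted verbatim from Asakura--Saito \cite{AS06} as background (the section opens with ``Here we give a brief overview of the main results of \cite{AS06} that we will need''), so your primary move of citing \cite{AS06} is exactly what the paper does. Your sketch of the underlying argument (logarithmic de Rham complex for the Hodge filtration on $X\setminus Z$, residue/pencil presentation via $\mu F+\nu G$ with Koszul regularity from transversality, and Griffiths-style differentiation giving multiplication by $[\dot F\,\mu]\in B_1(0)$) is a faithful outline of how the cited result is actually established, so there is nothing to fault beyond its being a sketch rather than a full proof.
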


\begin{thm}[Duality Theorem]\label{thm:AS-II}Suppose $n\geq 2$. Let $\Sigma = 2(d-n-1)+e$. The natural pairing $$B_q(l) \times B_{n-1-q}(\Sigma - l) \longrightarrow B_{n-1}(\Sigma) \cong \mathbb{C}$$is perfect for any $l$ such that $d-n-1\le l\le d-n-1+e$ and any $0 \leq q \leq n-1$.
\end{thm}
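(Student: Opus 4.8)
The plan is to prove this as a Macaulay-type (Gorenstein) duality for the generalized Jacobian ring $B=A/J(F,G)$, using the cohomological identification of Theorem~\ref{thm:AS-I} both as motivation and as a check on the socle. Throughout write $m=n-1=\dim X$ and $Z=X\cap Y\in|\Oo_X(e)|$.

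First I would establish that the $n+1$ generators $g_k=\partial_k F\,\mu+\partial_k G\,\nu$ of $J$ form a regular sequence in $A=P[\mu,\nu]$. Their common zero locus is $\{X_0=\cdots=X_n=0\}$ together with the points where $\mu\,\nabla F+\nu\,\nabla G=0$ with $(\mu,\nu)\neq 0$; by Euler's relation such a point forces a member of the pencil $\{sF+tG\}$ to be singular there, and the transversality of $X$ and $Y$ (smoothness of $X$, $Y$, and of $Z$) confines this to the expected codimension $n+1$. Hence $g_0,\dots,g_n$ cut out a locus of codimension $n+1$, so they are a regular sequence and $B$ is a graded complete intersection, in particular Cohen--Macaulay and Gorenstein.

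Next I would pin down the dualizing data from the Koszul resolution of $B$ over $A$. Each $g_k$ is bihomogeneous of bidegree $(1,-1)$ for the grading in which $\deg X_i=(0,1)$, $\deg\mu=(1,-d)$, $\deg\nu=(1,-e)$, so $\wedge^{n+1}$ of the Koszul module sits in bidegree $(n+1,-(n+1))$; combined with $\omega_A$ this yields $\omega_B\cong B$ up to a bidegree shift, and a direct count identifies the top nonvanishing piece as $B_{n-1}(\Sigma)$ with $\Sigma=2(d-n-1)+e$. This matches the cohomological picture: by Theorem~\ref{thm:AS-I} applied at twist $d-n-1\ge 0$ one has $B_{n-1}(\Sigma)\cong H^{n-1}(X,\Oo_X(d-n-1))_{\prim}=H^{n-1}(X,\omega_X)\cong\mathbb{C}$, so the abstract socle is canonically $H^{n-1}(X,\omega_X)$. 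Gorenstein duality for the complete intersection then gives that multiplication $B_q(l)\times B_{n-1-q}(\Sigma-l)\to B_{n-1}(\Sigma)\cong\mathbb{C}$ is a perfect pairing on the graded pieces lying in the Artinian window.

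The main obstacle is that $B$ is two--dimensional rather than Artinian (the extra variables $\mu,\nu$ make it a cone over the pencil $\mathbb{P}^1_{[\mu:\nu]}$), so Macaulay's theorem does not apply verbatim and one must isolate exactly the range $d-n-1\le l\le d-n-1+e$ in which the pairing is perfect. Conceptually this range is transparent: writing $t_1=l-(d+e-n-1)$ and $t_2=d-n-1-l$ one has $t_1,t_2\in[-e,0]$ and $t_1+t_2=-e$, so the two factors are the Hodge graded pieces $H^q(X,\Omega_X^{m-q}(\log Z)(t_1))$ and $H^{m-q}(X,\Omega_X^{q}(\log Z)(t_2))$ interpolating between $H^{m}(X\setminus Z)$ (twist $0$) and $H^m_c(X\setminus Z)$ (twist $-Z=-e$); the multiplication pairing is then the cup product
$$H^q(X,\Omega_X^{m-q}(\log Z)(t_1))\times H^{m-q}(X,\Omega_X^{q}(\log Z)(t_2))\longrightarrow H^{m}(X,\Omega_X^{m}(\log Z)(-Z))=H^m(X,\omega_X)\cong\mathbb{C},$$
which is exactly Serre duality on $X$, hence perfect, since $\Omega_X^{m-q}(\log Z)(t_1)^\vee\otimes\omega_X\cong\Omega_X^{q}(\log Z)(t_2)$. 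The only genuine work beyond the commutative algebra above is to supply the Asakura--Saito identification of $B_q(l)$ with these twisted (compact--support) cohomology groups for twists $t\in[-e,0)$ not covered by the stated range $l'\ge 0$ of Theorem~\ref{thm:AS-I}; once that is in hand, perfection is Serre duality, and outside the window the non--Artinian behaviour of $B$ obstructs perfection, which is why the hypothesis on $l$ is sharp.
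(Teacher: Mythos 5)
First, a point of context: the paper does not actually prove this statement --- Theorem \ref{thm:AS-II} is quoted from Asakura--Saito \cite{AS06} (see the preamble to the subsection and the Remark immediately following, which records that in \cite{AS06} the pairing fails to be perfect in general due to a defect space that vanishes under the hypotheses used here). So you are attempting to reprove an imported result, and your attempt contains a genuine gap in its very first step. The $n+1$ generators $g_k=\partial_k F\,\mu+\partial_k G\,\nu$ of $J(F,G)$ all lie in the ideal $(\mu,\nu)\subset A$, so $V(J)$ contains the linear subspace $\{\mu=\nu=0\}\cong\mathbb{A}^{n+1}$, which has codimension $2$ in $\mathrm{Spec}\,A$. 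A regular sequence of length $n+1$ would cut out codimension $n+1$, so for $n\ge 2$ the $g_k$ are \emph{not} a regular sequence, $B$ is not a complete intersection, and the Koszul/Gorenstein machinery you build on is unavailable. You can see the failure concretely in the grading: since $J\subset(\mu,\nu)$ while $A_0(l)=P^l$ contains no $\mu,\nu$, one has $B_0(l)=P^l$ for every $l\ge0$, so $B$ contains the whole polynomial ring $P$ and has Krull dimension at least $n+1$ --- not the two-dimensional ``cone over $\mathbb{P}^1_{[\mu:\nu]}$'' you describe. Your socle computation is correspondingly off: the Koszul shift from generators of bidegree $(1,-1)$ against $\omega_A$ lands in bidegree $(n-1,\,d+e-2n-2)$, which differs from $(n-1,\Sigma)$ with $\Sigma=2d+e-2n-2$ by $(0,d)$. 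This is not a repairable detail; the presence of these large extra graded pieces is exactly why the Asakura--Saito duality carries defect spaces and only holds in the window $d-n-1\le l\le d-n-1+e$.

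The second half of your argument --- rewriting the window as twists $t_1,t_2\in[-e,0]$ with $t_1+t_2=-e$ and recognizing the pairing as Serre duality between $H^q(X,\Omega_X^{n-1-q}(\log Z)(t_1))$ and $H^{n-1-q}(X,\Omega_X^{q}(\log Z)(t_2))$, using $\Omega_X^{n-1-q}(\log Z)^\vee\otimes\omega_X\cong\Omega_X^{q}(\log Z)(-Z)$ --- is the right heuristic and does explain why the window is what it is. But, as you concede, it presupposes (i) an extension of the Isomorphism Theorem \ref{thm:AS-I} to the negative twists $t\in[-e,0)$, which is precisely where the defect spaces live and where the identification can fail, and (ii) the compatibility of the ring multiplication on $B$ with the cup-product/trace pairing on the primitive parts. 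These two items are essentially the entire content of the Asakura--Saito proof; deferring them as ``the only genuine work'' leaves the argument without its substance. A self-contained proof would have to follow \cite{AS06}: construct the exact Koszul-type complexes computing $B_q(l)$ together with their duals, identify them with ordinary and compactly supported log de Rham cohomology across the full range of twists, and then verify that the algebraic pairing is Poincar\'e--Serre duality under that identification.
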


\begin{rem}In \cite{AS06}, the pairing is defined more generally and fails to be perfect in certain ranges due to a defect space. However, under our conditions for $X$ and $Y$ (both smooth hypersurfaces), the defect vanishes and the duality is perfect. This is the only case we require.\end{rem} 

\subsection{Hypersurface case and smooth divisor}
We use the above notation

\begin{cor}\label{cor:HodgeFil} 
Suppose that $d-n-1\le l\le d-n-1+e$. Let $\Sigma = 2(d-n-1)+e$. We have that $B_p(l) \neq 0$ if and only if:
\begin{itemize}
    \item     $$ \delta_{\min} \cdot p + l \ge 0 $$
    \item     $$ l \leq \Sigma + \delta_{\min} (n-1-p). $$
\end{itemize}
\end{cor}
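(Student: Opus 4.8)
The plan is to deduce both inequalities from the two Asakura--Saito theorems together with a direct analysis of the bigraded pieces $A_q(l)$. The key structural point is that the two bounds are exchanged by the Duality Theorem \ref{thm:AS-II}: since the pairing $B_p(l)\times B_{n-1-p}(\Sigma-l)\to B_{n-1}(\Sigma)\cong\mathbb{C}$ is perfect in the assumed range $d-n-1\le l\le d-n-1+e$, we have $B_p(l)\ne 0$ if and only if $B_{n-1-p}(\Sigma-l)\ne 0$, and the condition $l\le\Sigma+\delta_{\min}(n-1-p)$ is literally the condition $\delta_{\min}\,q+l'\ge 0$ applied to the dual indices $q=n-1-p$, $l'=\Sigma-l$. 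Thus it suffices to prove the single equivalence that $\delta_{\min}\,p+l\ge 0$ is exactly the lower threshold for nonvanishing of $B_p(l)$; the upper bound then follows for free by duality.

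First I would settle the lower bound, i.e. that $\delta_{\min}\,p+l<0$ forces $B_p(l)=0$. Writing $\delta_{\min}=d$ without loss of generality (the case $d=e$ being trivial, since then the bound coincides with plain nonvanishing of $A_p(l)$), the inequality $ad+be+l\ge 0$ with $a+b=p$ and $l<-dp$ forces $a<p$, so every nonzero monomial in $A_p(l)$ is divisible by $\nu$; concretely, since $\nu\in A_1(-e)$, multiplication by $\nu$ is an isomorphism $A_{p-1}(l+e)\xrightarrow{\ \sim\ }A_p(l)$ onto this whole piece (the would-be cokernel $P^{pd+l}\mu^p$ vanishes because $pd+l<0$). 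I would then propagate this through the Jacobian ideal using the relations $G_k\nu\equiv -F_k\mu\pmod J$ and the fact that, $Y$ being smooth, $(G_0,\dots,G_n)$ is a regular sequence in $P$: the Koszul complex on $(L_0,\dots,L_n)$, which by smoothness of $X,Y$ and transversality of $X\cap Y$ has homology supported on the irrelevant locus, computes $B_p(l)$ as a bigraded cohomology, and a degree count shows that for $\delta_{\min}\,p+l<0$ every contributing term of $A_p(l)$ is hit by a Koszul differential, giving $A_p(l)\subset J$ and hence $B_p(l)=0$. This step cannot be obtained from the Isomorphism Theorem \ref{thm:AS-I}, since that theorem interprets $B_q(m)$ geometrically only for $m\ge d+e-n-1$, i.e. at the upper endpoint of our range, whereas the present vanishing occurs in the self-dual interior.

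For the nonvanishing in the region where both inequalities hold, I would show that for fixed $p$ the set $\{l:B_p(l)\ne 0\}$ is an interval and exhibit nonzero classes at its boundary. Sharpness of the lower bound is witnessed by the pure power $\mu^p\in P^0\mu^p\subset A_p(-dp)$, whose nontriviality modulo $J$ reduces, via the same $\nu$-multiplication sequence, to Macaulay duality for the classical Jacobian ring $R(F)=P/(\partial F)$ of the smooth hypersurface $X$; the class realizing the upper boundary is then its dual under Theorem \ref{thm:AS-II}. To fill the interior without gaps I would use that multiplication by a general linear form $X_i\in A_0(1)$ has maximal rank on the groups $B_p(\bullet)$ (the weak Lefschetz property for these generalized Jacobian rings, valid in characteristic zero), so nonvanishing at the two endpoints forces nonvanishing throughout the interval. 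The main obstacle is precisely this interior analysis --- establishing the regular-sequence/exactness input for the Koszul computation and the maximal-rank property --- because, unlike the endpoints, the middle of the range is inaccessible to the Hodge-theoretic Isomorphism Theorem and must be handled algebraically inside $B(F,G)$.
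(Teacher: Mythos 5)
Your opening move---using the perfect pairing of Theorem \ref{thm:AS-II} to trade the upper bound $l\le\Sigma+\delta_{\min}(n-1-p)$ for the lower-threshold condition $\delta_{\min}\,q+l'\ge 0$ at the dual indices $q=n-1-p$, $l'=\Sigma-l$---is exactly the skeleton of the paper's argument. But from that point on the two proofs diverge sharply. The paper treats the lower threshold by a one-line degree count on the ambient space: the minimal degree among the summands $P^{ad+be+l}$ of $A_p(l)$ is $\delta_{\min}\,p+l$, and the paper reads off the answer for $B_p(l)$ from the (non)vanishing of $A_p(l)$ and of its dual partner $A_{n-1-p}(\Sigma-l)$. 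It invokes no Koszul complexes, no regular sequences, and no Lefschetz-type property of $B(F,G)$.

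Your route is much heavier, and the two steps that carry all the weight are asserted rather than proved. First, the Koszul ``degree count'' that is supposed to give $A_p(l)\subset J$ when $\delta_{\min}\,p+l<0$ is not supplied, and it cannot work as sketched: the generators $F_k\mu+G_k\nu$ all lie in $A_1(-1)$, so an element of $J\cap A_p(l)$ needs coefficients in $A_{p-1}(l+1)$, which itself vanishes for $l$ sufficiently negative; in that range nothing in $A_p(l)$ is ``hit by a Koszul differential,'' so whatever survives in $A_p(l)$ survives in $B_p(l)$. (Your observation that the $\nu$-divisible summands of $A_p(l)$ can be nonzero when $d\neq e$ even though $\delta_{\min}\,p+l<0$ is a legitimate point that the paper's one-line argument does not address---but your proposal does not close that gap either.) Second, the weak Lefschetz/maximal-rank property you invoke to fill the interior of the interval is a substantial claim about the non-Artinian bigraded ring $B(F,G)$, not a citable standard fact, and the paper needs no such input because the entire interval comes directly out of the duality window. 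Finally, your proposed boundary witness $\mu^p\in B_p(-dp)$ generally lies outside the hypothesis window $l\ge d-n-1$: one has $-dp<d-n-1$ exactly when $(p+1)d>n+1$, which holds for all $p\ge1$ once $d\ge n+1$, so this class cannot anchor the lower endpoint of the statement as formulated. As it stands the proposal is a program, with its two load-bearing lemmas unestablished, rather than a proof.
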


\begin{proof}
The conditions $d-n-1\le l\le d-n-1+e$ and $2 \leq n$ imply that we are in the situation of Theorem \ref{thm:AS-II}. This establishes a perfect duality pairing:
$$ B_p(l) \times B_{n-1-p}(\Sigma - l) \longrightarrow B_{n-1}(\Sigma) \cong \mathbb{C}. $$
Consequently, $B_p(l) \neq 0$ if and only if both the space itself and its dual space are non-zero.

For the Jacobian ring component $B_p(l)$ to be non-zero, the underlying polynomial space $A_p(l)$ must be non-zero. The minimal degree of a polynomial in $A_p(l)$ is $\delta_{\min} \cdot p + l$. 
If $\delta_{\min} \cdot p + l < 0$, then $A_p(l) = 0$, hence $B_p(l) = 0$. 
If $\delta_{\min} \cdot p + l \ge 0$, then $A_p(l) \neq 0$. 

 By perfect duality, $B_p(l) \neq 0 \iff B_{n-1-p}(\Sigma - l) \neq 0$. We apply the same non-negativity condition to the dual space indices:
\begin{itemize}
    \item Exterior degree: $p' = n-1-p$.
    \item Polynomial twist: $l' = \Sigma - l$.
\end{itemize}
The condition for the dual space to be non-zero is:
$$ \delta_{\min}(n-1-p) + (\Sigma - l) \ge 0. $$
Rearranging this inequality gives $l \leq \Sigma + \delta_{\min}(n-1-p)$.

Therefore, $B_p(l)$ is non-zero if and only if $l$ satisfies the interval defined by the non-negative degree of the space itself (lower bound) and the non-negative degree of its dual (upper bound).
\end{proof}

\begin{rem}
In the case of general type hypersurfaces where \(d \ge n+1\), the perfect duality window \(d-n-1 \le l \le d-n-1+e\) is contained within the non-vanishing range of Corollary \ref{cor:HodgeFil}. However, for Fano or Calabi-Yau hypersurfaces (\(d \le n\)), one must strictly enforce the non-vanishing inequalities \(\delta_{\min}p+l \ge 0\) and \(l \le \Sigma + \delta_{\min}(n-1-p)\), as the duality window may partially extend into the vanishing range where the spaces are trivially zero.
\end{rem}

\begin{cor}\label{cor:ConsMac} 
Let $\Sigma = 2(d-n-1)+e$. Consider integers $p, p', l, l'$ such that
\begin{itemize}
    \item $\delta_{\min}(n-1-p-p') + \Sigma - (l+l') \geq 0$, 
    \item $\delta_{\min} \cdot p' + l' \geq 0$,
    \item $p+p'\leq n-1$,
    \item $l+l' \leq \Sigma$, and 
    \item $d-n-1\le l\le d-n-1+e$.
\end{itemize} 
Then the map
$$ \mu: B_{p}(l) \to \Hom(B_{p'}(l'), B_{p+p'}(l+l')) $$
is injective.
\end{cor}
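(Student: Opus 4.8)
The plan is to deduce injectivity of $\mu$ from the Gorenstein-type perfect pairing of Theorem \ref{thm:AS-II} together with a surjectivity (generation) statement for multiplication in the generalized Jacobian ring $B$; this is the standard Carlson--Green--Griffiths--Harris mechanism for ``conservation'' lemmas, transported to the Asakura--Saito setting. Observe first that a class $\xi\in B_p(l)$ lies in $\ker\mu$ precisely when $\xi\cdot B_{p'}(l')=0$ inside $B_{p+p'}(l+l')$, so the task is to show that this vanishing forces $\xi=0$.

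So I would fix $0\neq\xi\in B_p(l)$. Since the hypothesis $d-n-1\le l\le d-n-1+e$ places us in the window of Theorem \ref{thm:AS-II}, the pairing
\[
B_p(l)\times B_{n-1-p}(\Sigma-l)\longrightarrow B_{n-1}(\Sigma)\cong\mathbb{C}
\]
is perfect, so there is $\zeta\in B_{n-1-p}(\Sigma-l)$ with $\xi\cdot\zeta\neq 0$. The idea is then to factor $\zeta$ through the bidegrees $(p',l')$ and $(n-1-p-p',\,\Sigma-l-l')$.

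The key step, and the main obstacle, is to prove that the multiplication map
\[
B_{p'}(l')\otimes B_{n-1-p-p'}(\Sigma-l-l')\longrightarrow B_{n-1-p}(\Sigma-l)
\]
is surjective. I would establish this by lifting to $A=P[\mu,\nu]$ and exploiting that $P$ is generated in degree one, so that in non-negative degrees the multiplication $P^{a}\otimes P^{b}\to P^{a+b}$ is surjective: a target monomial $g\,\mu^a\nu^b$ has $\deg g\ge 0$, and one splits both the exponent vector of $g$ and the pair $(a,b)$ so that each factor lands in the prescribed bidegree with non-negative polynomial degree. The listed inequalities are exactly what make such an allocation possible. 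The condition $p+p'\le n-1$ makes the intermediate exterior degree $n-1-p-p'$ non-negative; the bounds $\delta_{\min}\,p'+l'\ge 0$ and $\delta_{\min}(n-1-p-p')+\Sigma-(l+l')\ge 0$ are the non-vanishing lower bounds of Corollary \ref{cor:HodgeFil}, guaranteeing that the two source pieces are nonzero and that their minimal polynomial degrees are non-negative; and $l+l'\le\Sigma$ keeps the internal degree $\Sigma-l-l'$ of the second factor non-negative, which is what frees the allocation of the cheaper of $\mu,\nu$ (the one of weight $\delta_{\min}$) in the splitting. Since a surjection in $A$ descends to the quotient $B=A/J(F,G)$, the displayed map is surjective.

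Finally I would combine the two steps: writing $\zeta=\sum_i \eta_i\theta_i$ with $\eta_i\in B_{p'}(l')$ and $\theta_i\in B_{n-1-p-p'}(\Sigma-l-l')$, the relation $0\neq \xi\cdot\zeta=\sum_i(\xi\eta_i)\,\theta_i$ forces $\xi\eta_i\neq 0$ for some $i$, that is $\mu(\xi)(\eta_i)\neq 0$. Hence $\xi\notin\ker\mu$, and $\mu$ is injective. The delicate point throughout is the combinatorial check that, for every target monomial, the inequalities leave enough room to distribute the $\mu,\nu$ weight between the two factors while keeping both polynomial degrees non-negative; once this is verified the conclusion is a formal consequence of the perfect duality.
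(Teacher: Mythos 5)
Your argument is correct and is essentially the paper's own proof read contrapositively: both rest on the perfect pairing of Theorem \ref{thm:AS-II} in the window $d-n-1\le l\le d-n-1+e$ together with the surjectivity of the multiplication $B_{p'}(l')\otimes B_{n-1-p-p'}(\Sigma-l-l')\to B_{n-1-p}(\Sigma-l)$, which holds because the listed inequalities force all the relevant polynomial degrees in $A=P[\mu,\nu]$ to be non-negative so that multiplication is already surjective upstairs and descends to the quotient $B$. Your more explicit allocation of the $(\mu,\nu)$-exponents is a welcome elaboration of the step the paper only asserts, but the route is the same.
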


\begin{proof}
By contradiction. Suppose we have an element $A \in \ker \mu \subset B_{p}(l)$. By definition, for all $B \in B_{p'}(l')$, we have that 
$$ AB = 0 \in B_{p+p'}(l+l'). $$
We wish to show $A=0$ by testing it against the dual space. For all $\phi \in B_{n-1-p-p'}(\Sigma - l - l')$, we have 
$$ A(B\phi) = (AB)\phi = 0 \in B_{n-1}(\Sigma). $$
The first two bullet points of the hypothesis ensure that the degrees of the generating spaces are non-negative, so the spaces are non-zero. Since the Jacobian ring $B$ is a quotient of the polynomial ring $A$ (where multiplication is surjective), and the graded pieces are non-zero, the multiplication map
$$ B_{n-1-p-p'}(\Sigma - l - l') \otimes B_{p'}(l') \to B_{n-1-p}(\Sigma - l) $$
is surjective. 
Therefore, the products $C = B\phi$ generate the entire space $B_{n-1-p}(\Sigma - l)$, which is the dual of $B_p(l)$.
We have established that $AC = 0$ in $B_{n-1}(\Sigma)$ for all $C$ in the dual space of $B_{p}(l)$. By the fifth bullet point, Theorem \ref{thm:AS-II} applies and the duality is perfect; thus we obtain that $A=0$.
\end{proof}

\begin{thm} 
Let $\Sigma = 2(d-n-1)+e$. If
\begin{itemize}
    \item $\delta_{\min}(n-p) + d+e-n-1 \ge 0$, and
    \item $d+e-n-1 \leq \Sigma + \delta_{\min}(p-1)$,
\end{itemize}
the Hodge loci $S_\lambda^p$ are proper analytic subspaces of $S$, for all $0\not = \lambda \in H^0(S, R^{n-1}\pi_*\mathbb{C}_{\mathrm{prim}})$.
\end{thm}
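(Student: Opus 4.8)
The plan is to follow the strategy of Carlson–Green–Griffiths–Harris as presented in Voisin's book, which relates the codimension of the Hodge locus to the rank of the infinitesimal variation of Hodge structure. The Hodge locus $S_\lambda^p$ is the locus where a flat section $\lambda$ of $\mathcal{H}_{\mathrm{prim}}^{n-1}$ stays in $F^p$. The key local principle is that the tangent space to $S_\lambda^p$ at a point $s$ is cut out by the obstruction to keeping $\lambda_s$ in $F^p$ to first order; concretely, the normal space to $S_\lambda^p$ is controlled by the image of $\lambda_s$ under the composition of $\bar\nabla$ with projection to $\mathcal{H}^{p-1,n-p}$. Thus $S_\lambda^p$ is proper (i.e.\ not all of $S$) precisely when, for a generic $s$, this infinitesimal obstruction is nonzero — equivalently, the map $\bar\nabla(-)(\lambda_s)$ does not annihilate $\lambda_s$ for all tangent directions.

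First I would translate everything into the Jacobian ring via Theorem~\ref{thm:AS-I}. A nonzero Hodge class $\lambda_s \in F^p \cap H^{p-1,n-p}$ corresponds, under the isomorphism $\varphi$, to a nonzero element $A \in B_{n-p}(d+e-n-1)$ (taking $q=n-p$, $l=0$ so that $d+e-n-1+l = d+e-n-1$). The first-order condition for $\lambda$ to remain in $F^p$ along a tangent vector is that $\bar\nabla$ sends the class to zero in the next graded piece, which by part (2) of Theorem~\ref{thm:AS-I} becomes the multiplication map
\[
\mu_A : B_1(0) \longrightarrow B_{n-p+1}(d+e-n-1),\qquad B\longmapsto A\cdot B.
\]
The Hodge locus fails to be proper only if this obstruction vanishes identically, i.e.\ only if $\mu_A \equiv 0$ for the class $A$. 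So properness reduces to showing that multiplication by any nonzero $A \in B_{n-p}(d+e-n-1)$ cannot be the zero map into $B_{n-p+1}(d+e-n-1)$.

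The decisive step is to invoke the injectivity provided by Corollary~\ref{cor:ConsMac}. I would apply it with $p \mapsto n-p$, $l = d+e-n-1$, and $p' = 1$, $l' = 0$, so that the target is exactly $B_{n-p+1}(d+e-n-1)$. The map
\[
\mu : B_{n-p}(d+e-n-1) \longrightarrow \Hom\bigl(B_1(0),\,B_{n-p+1}(d+e-n-1)\bigr)
\]
is then injective, provided the five bullet hypotheses of Corollary~\ref{cor:ConsMac} are met. Checking these is where the two numerical conditions in the theorem statement enter: the inequality $\delta_{\min}(n-p) + d+e-n-1 \ge 0$ matches the ``$\delta_{\min}\cdot p'' + l'' \ge 0$'' non-vanishing bound for the source space (ensuring $A$ lives in a nonzero piece), while $d+e-n-1 \le \Sigma + \delta_{\min}(p-1)$ is the reformulation of the dual non-vanishing condition $\delta_{\min}(n-1-p-p')+\Sigma-(l+l')\ge 0$ that guarantees the duality window of Theorem~\ref{thm:AS-II} is open and the pairing perfect. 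The remaining bullets ($p+p' \le n-1$, $l+l' \le \Sigma$, and $d-n-1 \le l \le d-n-1+e$) are routine to verify from these two hypotheses.

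Injectivity of $\mu$ immediately gives the conclusion: if $A \neq 0$ then $\mu_A \not\equiv 0$, so for the Hodge class corresponding to $\lambda_s$ there is at least one tangent direction in which the first-order obstruction is nonzero. Hence the tangent space to $S_\lambda^p$ at a generic point is a proper subspace of $T_s S$, which forces $S_\lambda^p$ to be a proper analytic subset of $S$. \textbf{The main obstacle} I anticipate is not the algebra but the careful bookkeeping of which graded piece and which twist $\lambda_s$ and its obstruction land in, together with the verification that the flat section genuinely produces a \emph{nonzero} Jacobian-ring element at a generic point of $S_\lambda^p$ (so that injectivity can be applied). One must argue that the identification $\varphi$ is compatible with the flat structure so that a nonzero flat Hodge class yields a nonzero class $A$ fiberwise; this is standard but requires invoking that $\lambda$ is a nonzero global section of the primitive local system and that the IVHS description holds uniformly over $S$.
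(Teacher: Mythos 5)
Your overall strategy is the same as the paper's: represent the relevant Hodge component of $\lambda_s$ in the Asakura--Saito Jacobian ring via Theorem \ref{thm:AS-I}, identify $\bar\nabla$ with ring multiplication, invoke the injectivity of $\mu$ from Corollary \ref{cor:ConsMac} to see that a nonzero class has nonzero first-order obstruction, and conclude properness. However, there is a concrete indexing error that breaks the key verification. The component of $\lambda_s$ whose derivative obstructs membership in $F^p$ is the $(p,n-p-1)$ graded piece, i.e.\ $H^{n-p-1}(X,\Omega_X^{p}(\log Z))_{\prim}\cong B_{n-p-1}(d+e-n-1)$ (take $q=n-p-1$ in Theorem \ref{thm:AS-I}); the obstruction then lands in $H^{p-1,n-p}\cong B_{n-p}(d+e-n-1)$. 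You instead place the class itself in $B_{n-p}(d+e-n-1)$ (which is the \emph{target} of the obstruction map, not its source) and send it to $B_{n-p+1}(d+e-n-1)$; that computes the obstruction to staying in $F^{p-1}$, not $F^p$. As a result, when you apply Corollary \ref{cor:ConsMac} with ``$p\mapsto n-p$, $p'=1$, $l=d+e-n-1$, $l'=0$'', its first bullet reads $\delta_{\min}(p-2)+\Sigma-(d+e-n-1)\ge 0$, i.e.\ $d+e-n-1\le \Sigma+\delta_{\min}(p-2)$, which does \emph{not} follow from the stated hypothesis $d+e-n-1\le \Sigma+\delta_{\min}(p-1)$ (and the bullet $p+p'\le n-1$ becomes $n-p+1\le n-1$). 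With the correct choice $p_{\mathrm{Cor}}=n-p-1$, as in the paper, both conditions reduce exactly to the theorem's hypotheses. Your claim that the remaining bullets are ``routine'' is therefore not one you could have discharged with your indexing.

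Two smaller points. First, the theorem's first hypothesis is used in the paper not as a non-vanishing condition on the source of $\mu$ but to guarantee, via Corollary \ref{cor:HodgeFil} with $q=n-p$, that the target $B_{n-p}(d+e-n-1)\cong H^{p-1,n-p}$ is nonzero, so that a nonzero obstruction can actually be detected there. Second, your argument only shows that the top Hodge component of $\lambda_s$ cannot be flat; to conclude $\lambda=0$ you still need the descending induction through the filtration that the paper carries out: if $S_\lambda^p=S$ then $\nabla\lambda=0$ in the graded piece forces $P=0$, hence $\lambda\in F^{p+1}$, and one repeats. You gesture at the fiberwise nonvanishing issue in your final paragraph but do not supply this step.
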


\begin{proof}
The first two conditions and Corollary \ref{cor:HodgeFil} (applied to $q=n-p$) imply that 
$$
H^{p-1, n-p}(X_s \setminus Z_s) \cong B_{n-p}(d+e-n-1) \neq 0.
$$
Since $F^p \subset F^{p-1}$, showing $S_\lambda^p$ is proper is equivalent to showing that $\lambda$ does not remain in $F^p$ everywhere, i.e., that the derivative of $\lambda$ moves it into a non-zero part of $F^{p-1}/F^p$. Specifically, we want to show that if $\nabla \lambda = 0$ in the graded piece, then $\lambda = 0$.

Let $s\in S$ and suppose that there exists $0\neq \lambda\in H^{p,n-p-1}(X_s\setminus Z_s)$. Let us take a polynomial $P\in B_{n-p-1}(d+e-n-1)$ representing $\lambda$.

The infinitesimal variation $\bar{\nabla}(\lambda)$ identifies with the multiplication map by $P$ by Theorem \ref{thm:AS-I}:
$$
\mu_P: B_1(0) \to B_{n-p}(d+e-n-1). 
$$
We claim that this map $\mu_P$ is injective (as a map of vector spaces, meaning $P$ acts faithfully). To see this, consider the multiplication pairing map:
$$
\mu: B_{n-p-1}(d+e-n-1) \to \Hom(B_1(0), B_{n-p}(d+e-n-1)).
$$
By Corollary \ref{cor:ConsMac}, this map $\mu$ is injective. Therefore, $\mu(P) = \mu_P$ is zero if and only if $P=0$. 
Consequently, the infinitesimal period map
$$
\nabla: T_{S,s} \to H^{p-1,n-p}(X_s\setminus Z_s) 
$$
is non-zero for $0\neq \lambda$.

Standard arguments involving Griffiths transversality show that if $S_\lambda^p = S$ (meaning $\lambda$ is a flat section lying in $F^p$ everywhere), then $\nabla \lambda = 0$. As shown above, this implies $P=0$, so the component of $\lambda$ in $H^{p, n-p-1}$ vanishes, meaning $\lambda \in F^{p+1}$.
By induction on $p$, we conclude that if $S_\lambda^p=S$, we have $\lambda=0$.
\end{proof}

\begin{rem}
\label{rem:ell-range-check}
For the representative $P\in B_{n-p-1}(d+e-n-1)$, the twist is $l=d+e-n-1$. This satisfies the perfect duality window condition $d-n-1 \le l \le d-n-1+e$ trivially because $e\geq 0$, ensuring Theorem \ref{thm:AS-II} applies.

\end{rem}

\bibliographystyle{smfalpha}
\bibliography{sample}
\end{document}